\newtheorem{thm}{Theorem}[section]
\newtheorem{lem}[thm]{Lemma}
\newtheorem{rem}[thm]{Remark}
\newcommand{\bremark}{\begin{rem} \textup}
\newcommand{\eremark}{\end{rem} }
\newcommand{\cuad}{{\sqcap\kern-.68em\sqcup}}
\newcommand{\R}{{\mathbb{R}}}
\renewcommand{\rho}{\varrho}
\renewcommand{\theta}{\vartheta}
\newcommand{\vp }{\varphi }
\begin{document}

\subjclass[2000]{35J61; 35B50; 35B06; 35J47; }

\parindent 0pc
\parskip 6pt
\overfullrule=0pt

\title[Symmetry and monotonicity properties of singular solutions]{Symmetry and monotonicity properties of singular solutions to some cooperative semilinear elliptic systems involving critical nonlinearities}

\author {Francesco Esposito$^{*,+}$}

\date{\today}

\date{\today}

\address{* Dipartimento di Matematica e Informatica, UNICAL,
Ponte Pietro  Bucci 31B, 87036 Arcavacata di Rende, Cosenza, Italy.}

\address{+ Universit\'e de Picardie Jules Verne, LAMFA, CNRS UMR
7352, 33, rue Saint-Leu 80039 Amines, France}

 \email{esposito@mat.unical.it}

\keywords{Semilinear elliptic systems, Semilinear elliptic
equations, singular solutions, qualitative properties, critical nonlinearities}

\thanks{F. Esposito is partially supported by PRIN project 2019, {\em Qualitative and quantitative properties of nonlinear PDEs} and by INDAM-GNAMPA project 2018 {\em Problemi ellittici semilineari: alcune idee variazionali.}}

\maketitle

\date{\today}

\begin{abstract}
We investigate qualitative properties of positive singular solutions of some elliptic systems in bounded and unbounded domains. We deduce symmetry and monotonicity properties via the moving plane procedure. Moreover, in the unbounded case, we study some cooperative elliptic systems involving critical nonlinearities in $\R^n$.
\end{abstract}

\section{introduction}
The aim of this paper is to investigate symmetry and monotonicity
properties of singular solutions to some semilinear
elliptic systems. In the first part of the paper we start by considering the
following semilinear elliptic system
\begin{equation} \label{E:systemsemilinear}
\begin{cases}
-\Delta u_i\,=f_i(u_1,\dots,u_m)& \text{in}\quad\Omega\setminus \Gamma  \\
u_i> 0 &  \text{in}\quad\Omega\setminus \Gamma  \\
u_i=0 & \text{on}\quad\partial \Omega\,
\end{cases}
\end{equation}
where $\Omega$ is a bounded smooth domain of $\mathbb{R}^n$ with
$n\geq 2$ and $i=1,...,m$ ($m \geq 2$). The technique which is mostly used in this paper is the well-known moving plane method which goes back to the seminal works of Alexandrov \cite{A} and Serrin \cite{serrin}. See also the celebrated
papers of Berestycki-Nirenberg \cite{BN} and Gidas-Ni-Nirenberg \cite{GNN}. Such a technique can be performed in general domains providing partial monotonicity results
near the boundary and symmetry when the domain is convex and
symmetric. For simplicity of exposition we assume directly in all
the paper that $\Omega$ is a convex domain which is symmetric with
respect to the hyperplane $\{x_1=0\}$. The solution has a possible
singularity on the critical set $\Gamma\subset \Omega$. When $m=1$, system \eqref{E:semilinearSym} reduces to a scalar equations that was already studied in \cite{EFS,Dino}. The moving plane procedure for semilinear elliptic systems has been firstly adapted by Troy in \cite{troy} where he considered the cooperative system \eqref{E:systemsemilinear} with $\Gamma= \emptyset$ (see also \cite{defig1,defig2,RZ}). This technique was also adapted in the case of cooperative semilinear systems in the half space by Dancer in \cite{Dan} and in the whole space by Busca and Sirakov in \cite{busca}. For the case of quasilinear elliptic systems in bounded domains we suggest \cite{MSS}. 

Moreover, motivated by \cite{leoni}, through all the paper, we assume that the following hypotheses (denoted by $(h_{f_i})$ in the sequel) hold:

\begin{itemize}
\item[$(h_{f_i})$]
\begin{itemize}
\item[(i)]  $f_i: \R^m_+ \rightarrow \R$ are assumed to be $\mathcal{C}^1$ 
functions for every $i=1,...,m$.

\item[(ii)] The functions $f_i$ ($1 \leq i \leq m$) are assumed  to satisfy the monotonicity (also known as \textit{cooperative}) conditions
$$\frac{\partial f_i}{\partial t_j} (t_1,...,t_j,...,t_m) \geq 0 \quad \text{for} \quad  i \neq j, \; 1 \leq i,j \leq m.$$
\end{itemize}
\end{itemize}

In this paper the case of singular nonlinearities for systems is not included, while it was considered in the case of scalar equations, see \cite{EFS}; about these problems we have also to mention the pioneering work of Crandall, Rabinowitz and Tartar \cite{crandall} and also  \cite{boccardo,CES,ES,lazer,stuart} for the scalar case. It would be interesting to consider in future projects a more general class of nonlinearities. In particular it would be interesting to study problems involving singular nonlinearities as in the scalar case, using some techniques developed in \cite{CES,ES}. Since we want to consider singular solutions, the natural assumption in our paper is
$$u_i \in H_{loc}^1(\Omega \setminus \Gamma) \cap
C(\overline{\Omega} \setminus \Gamma) \quad \forall i=1,...,m$$
and thus the system is
understood in the following sense:
\begin{equation} \label{E:weakfor}
\int_\Omega \nabla u_i \nabla \vp_i \, dx = \int_\Omega f_i(u_1,u_2,...,u_m) \vp_i \, dx \qquad \forall \varphi_i \in C^1_c(\Omega \setminus \Gamma)
\end{equation}
for every $i=1,...,m$.

\begin{rem}\label{E:standellest}
Note that, by the assumption $(h_{f_i})$, the right hand side in the system \eqref{E:semilinearSym} is locally bounded.	Therefore, by standard elliptic regularity theory, it follows that
$$u_i \in C_{loc}^{1,\alpha} (\Omega \setminus \Gamma),$$
\noindent where $0 < \alpha < 1$. We just remark that, in 1968, E. De Giorgi provided a counterexample showing that the scalar case is special and the regularity theory does not work in general for elliptic systems, see \cite{degiorgi}, however, in the case of equations involving Laplace operator, Schauder theory is still applicable.
\end{rem} 

Under the previous assumptions we can prove the following result:

\begin{thm} \label{E:semilinearSym}
Let $\Omega$ be a convex domain which is symmetric with respect to
the hyperplane $\{x_1=0\}$ and let $(u_1,...,u_m)$ be a solution to
\eqref{E:systemsemilinear}, where $u_i \in
H^1_{loc}(\Omega\setminus\Gamma)\cap
C(\overline\Omega\setminus\Gamma)$ for every $i=1,...,m$. Assume that each $f_i$ fulfills $(h_{f_i})$. Assume also that
$\Gamma$ is a point if $n=2$ while
 $\Gamma$ is closed and such that
$$\underset{\R^n}{\operatorname{Cap}_2}(\Gamma)=0,$$
if $n\geq 3$. \noindent Then, if $\Gamma\subset\{x_1=0\}$, it
follows that $u_i$ is symmetric with respect to the
hyperplane $\{x_1=0\}$ and increasing in the $x_1$-direction in
$\Omega\cap\{x_1<0\}$, for every $i=1,...,m$. Furthermore
\[\partial_{x_1} u_i>0 \qquad  \text{in}\quad \Omega\cap\{x_1<0\}\, ,
\]
for every $i=1,...,m$.
\end{thm}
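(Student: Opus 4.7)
I will run the moving plane method simultaneously on all components $u_i$ in the $x_1$-direction, combining the cooperative-system argument of Troy \cite{troy} with capacitary cutoffs around the singular set $\Gamma$ in the spirit of \cite{EFS, Dino}. With the usual notation $T_\lam := \{x_1 = \lam\}$, $\Om_\lam := \Om \cap \{x_1 < \lam\}$, $x^\lam$ the reflection of $x$ across $T_\lam$, $u_i^\lam(x) := u_i(x^\lam)$, $w_i^\lam := u_i - u_i^\lam$, and $\Gamma_\lam := \Gamma \cup \Gamma^\lam$, note that since $\Gamma \subset \{x_1 = 0\}$ the set $\Gamma_\lam$ stays at positive distance $|\lam|$ from $T_\lam$ whenever $\lam < 0$. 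Setting $a := \inf_\Om x_1$ and
\[
\Lam := \{\lam \in (a,0] : w_i^\mu \leq 0 \text{ in } \Om_\mu \setminus \Gamma_\mu \text{ for all } \mu \in (a,\lam] \text{ and every } i\}, \quad \lam_0 := \sup \Lam,
\]
a standard narrow-cap test with $(w_i^\lam)^+$ summed over $i$ (using the positivity on the boundary and cooperativity $(h_{f_i})$(ii)) gives $\Lam \ne \emptyset$: for $\lam$ close to $a$, $\Om_\lam$ is disjoint from $\Gamma_\lam$ and the classical argument applies directly. The heart of the proof is to show $\lam_0 = 0$.

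\textbf{Capacitary cutoff and core inequality.} Assume by contradiction $\lam_0 < 0$ and pick $\lam \in (\lam_0, 0)$ close to $\lam_0$. Since $\operatorname{Cap}_2(\Gamma) = 0$ (and, when $n=2$, a point is $2$-polar), choose $\psi_\e \in C_c^\infty(\R^n)$ with $0 \leq \psi_\e \leq 1$, $\psi_\e \equiv 0$ in a neighborhood of $\Gamma$, $\psi_\e \to 1$ a.e.\ on $\R^n \setminus \Gamma$, and $\|\nabla\psi_\e\|_{L^2(\R^n)} \to 0$, and set $\Psi_\e(x) := \psi_\e(x)\psi_\e(x^\lam)$, which vanishes near $\Gamma_\lam$ and inherits the Dirichlet-smallness. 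Testing \eqref{E:weakfor} for $u_i$ and its reflected analogue for $u_i^\lam$ against $(w_i^\lam)^+\Psi_\e^2\chi_{\Om_\lam}$, subtracting, summing over $i$, and expanding $f_i(u) - f_i(u^\lam) = \sum_j b_{ij}(x)(u_j - u_j^\lam)$ (with $b_{ii}$ locally bounded and $b_{ij} \geq 0$ for $i \ne j$), I arrive at
\[
\sum_i \int_{\Om_\lam} |\nabla (w_i^\lam)^+|^2 \Psi_\e^2 \dx \;\leq\; C \sum_i \int_{\Om_\lam} ((w_i^\lam)^+)^2 \Psi_\e^2 \dx + R_\e,
\]
the off-diagonal terms being controlled via Young's inequality on $\{(w_i^\lam)^+,(w_j^\lam)^+ > 0\}$ and the remainder $R_\e$ produced by $\nabla\Psi_\e$ being bounded by Cauchy--Schwarz through $\|\nabla\Psi_\e\|_{L^2}\,\|\nabla u\|_{L^2(\operatorname{supp}\Psi_\e)}$, which tends to $0$ by the local $H^1$-hypothesis on $u_i$.

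\textbf{Closure and strict monotonicity.} Passing to $\e \to 0$ via monotone/dominated convergence and applying Poincar\'e's inequality on the narrow strip $\Om_\lam \setminus \Om_{\lam_0}$, which contains $\operatorname{supp}(w_i^\lam)^+$ thanks to $\lam_0 \in \overline{\Lam}$ and the continuity of $\lam \mapsto u_i^\lam$ off $\Gamma_\lam$, the right-hand side is absorbed into the left for $\lam - \lam_0$ small, forcing $(w_i^\lam)^+ \equiv 0$ against $\lam > \lam_0$. Hence $\lam_0 = 0$. The symmetric procedure from the right (justified by the assumed symmetries of $\Om$ and $\Gamma$) gives the reverse inequality, so each $u_i$ is symmetric about $\{x_1 = 0\}$. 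Finally, for any $\lam \in (a,0)$, $-w_i^\lam \geq 0$ solves a cooperative linear elliptic system in $\Om_\lam \setminus \Gamma_\lam$ with nonnegative off-diagonal coefficients; the strong maximum principle for such systems (see \cite{busca}) combined with Hopf's lemma at the interior hyperplane $T_\lam$ yields $\pa_{x_1} u_i > 0$ on $T_\lam$, hence throughout $\Om \cap \{x_1 < 0\}$.

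\textbf{Main obstacle.} The delicate point is reconciling the moving plane scheme with the possibly singular behaviour on $\Gamma$: the natural test $(w_i^\lam)^+\chi_{\Om_\lam}$ is inadmissible in \eqref{E:weakfor} because it need not vanish at $\Gamma_\lam$, while inserting $\Psi_\e$ produces an error term $R_\e$ that has to disappear in the limit. The combination of $\operatorname{Cap}_2(\Gamma) = 0$ (giving cutoffs with arbitrarily small Dirichlet energy) with $u_i \in H^1_{loc}(\Om \setminus \Gamma)$ (controlling $\nabla u_i$ on $\operatorname{supp}\Psi_\e$) is precisely what makes the cutoff step succeed.
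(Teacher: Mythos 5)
Your architecture is the same as the paper's: move the plane on all components simultaneously, use the cooperativity in $(h_{f_i})$(ii) to telescope $f_i(u)-f_i(u^\lam)$, cut off around the (reflected) singular set using that it has zero $2$-capacity, and absorb the right-hand side via a Poincar\'e inequality on a small set. However, two steps as you state them do not work.

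First, the remainder $R_\e$. You bound the term $2\int\nabla w^+\cdot\nabla\Psi_\e\,w^+\Psi_\e$ by $\|\nabla\Psi_\e\|_{L^2}\,\|\nabla u\|_{L^2(\operatorname{supp}\Psi_\e)}$ and claim this tends to $0$ ``by the local $H^1$-hypothesis.'' It does not: $\operatorname{supp}\Psi_\e$ expands as $\e\to 0$ (it exhausts the complement of $\Gamma_\lam$), and even if you meant $\operatorname{supp}\nabla\Psi_\e$, that set concentrates at $R_\lam(\Gamma)$, where the relevant gradient is $\nabla u_{i}^{\lam}$, i.e.\ $\nabla u_i$ pulled back from a neighborhood of $\Gamma$ itself; the hypothesis $u_i\in H^1_{loc}(\Om\setminus\Gamma)$ gives no control on $\int|\nabla u_i|^2$ near $\Gamma$ (if it did, the singularity would essentially be removable and the cutoff apparatus unnecessary). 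So your product is $o(1)$ times a possibly infinite quantity. The correct estimate, which is what the paper uses, is Young's inequality: the cross term splits into $\tfrac14\int|\nabla w^+|^2\Psi_\e^2$ (absorbed on the left) plus $4\int|\nabla\Psi_\e|^2(w^+)^2\le 4\|u_i\|^2_{L^\infty(\Om_\lam)}\,\|\nabla\Psi_\e\|^2_{L^2}<16\,\e\,\|u_i\|^2_{L^\infty(\Om_\lam)}$, using $0\le w^+\le u_i$ and the fact that $u_i\in C(\overline\Om\setminus\Gamma)$ is bounded on $\overline{\Om_\lam}$ for $\lam<0$. The gradient of $u_i$ near the singular set is never touched.

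Second, the closure step. You assert $\operatorname{supp}(w_i^\lam)^+\subset\Om_\lam\setminus\Om_{\lam_0}$ ``thanks to $\lam_0\in\overline{\Lam}$ and the continuity of $\lam\mapsto u_i^\lam$.'' This is false: $w_i^{\lam_0}\le 0$ in $\Om_{\lam_0}$ plus continuity only says $w_i^\lam$ is close to a nonpositive function, and at interior points where $w_i^{\lam_0}$ vanishes, $(w_i^\lam)^+$ can be strictly positive for $\lam>\lam_0$. The indispensable missing ingredient is the strong maximum principle for the linearized cooperative system on the connected open set $\Om_{\lam_0}\setminus R_{\lam_0}(\Gamma)$, which upgrades $u_i\le u_i^{\lam_0}$ to $u_i<u_i^{\lam_0}$ there; one then fixes a compact $K\subset\Om_{\lam_0}\setminus R_{\lam_0}(\Gamma)$ with $|\Om_{\lam_0}\setminus K|$ small (possible because $|R_{\lam_0}(\Gamma)|=0$), propagates the strict inequality to $K$ for $\lam$ near $\lam_0$ by uniform continuity, and runs the absorption on $\Om_\lam\setminus K$ --- a set of small measure, not a strip. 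As written, your argument would establish the inductive step without ever invoking the strong maximum principle, which cannot be right. The initialization and the final $\pa_{x_1}u_i>0$ claim follow the standard route and are fine modulo these repairs.
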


The technique developed in the first part of the paper and in \cite{EFS, EMS, Dino} (see also \cite{mps} for the nonlocal setting) is very powerful and can be adapted to some cooperative systems in $\R^n$ involving critical nonlinearities. Papers on existence or qualitative properties of solutions to systems with critical growth in $\R^n$ are very few, due to the lack of compactness given by the Talenti bubbles and the difficulties arising from the lack of good variational methods. We refer the reader to \cite{busca,pistoia,grossi1,grossi2,guoliu,Wang} for this kind of systems. The starting point of the second part of the paper is the study of qualitative properties of singular solutions to the following $m \times m$ system of equations

\begin{equation}\label{E:criticalsingularequbdd}
\begin{cases}
\displaystyle -\Delta u_i = \sum_{j=1}^m a_{ij} u_j^{2^*-1}  &  \text{in}\quad\R^n\setminus \Gamma,\\
u_i> 0 &  \text{in}\quad\R^n\setminus \Gamma,
\end{cases}
\end{equation}

where $i=1,...,m$, $m \geq 2$, $n \geq 3$ and the matrix $A:=(a_{ij})_{i,j=1,...,m}$ is symmetric and such that 
\begin{equation}\label{E:condition}
\sum_{j=1}^m a_{ij}=1 \; \text{for every $i=1,...,m$.}
\end{equation}
These kind of systems, with $\Gamma = \emptyset$, was studied by Mitidieri in \cite{mitidieri, mitidieri1} considering the case $m=2$, $A=\begin{pmatrix} 0 & 1 \\ 1 & 0 \end{pmatrix}$ and it is known in the literature as nonlinearity belonging
to the {\em critical hyperbola}.

If $m=1$, then \eqref{E:criticalsingularequbdd} reduces to the classical critical Sobolev equation

\begin{equation}
\label{E:problemubdd}
\begin{cases}
-\Delta u\, = u^{2^{*}-1} & \text{in}\quad \mathbb{R}^{n}\setminus
\Gamma
\\
u> 0 & \text{in}\quad \mathbb{R}^{n}\setminus \Gamma .
\\
\end{cases}
\end{equation}

that can be found in \cite{EFS, Dino}. If $\Gamma$ reduces to a single point we find the result contained in \cite{terracini}, while if $\Gamma = \emptyset$ then system \eqref{E:problemubdd} reduces to the classical Sobolev equation (see \cite{CGS}). For existence results of radial and nonradial solutions for \eqref{E:criticalsingularequbdd}, we refer to some interesting papers \cite{grossi1, grossi2}. We want to remark that in \cite{grossi1, grossi2} the authors treat the general case of a matrix $A$ in which its entries $a_{ij}$ are not necessarily positive and this fact implies that it is not possible to apply the maximum principle. As remarked above the natural assumption is
$$u_i \in H_{loc}^1(\R^n \setminus \Gamma)  \quad \forall i=1,...,m$$
and thus the system is
understood in the following sense:
\begin{equation} \label{E:weakforubdd}
\int_{\R^n} \nabla u_i \nabla \vp_i \, dx = \sum_{j=1}^m a_{ij} \int_{\R^n} u_j^{2^*-1} \vp_i \, dx \qquad \forall \varphi_i \in C^1_c(\R^n \setminus \Gamma)
\end{equation}
for every $i=1,...,m$. 

What we are going to show is the following result:

\begin{thm}\label{E:main2}
	Let $n\geq 3$ and let $(u_1,...,u_m)$ be a solution to \eqref{E:criticalsingularequbdd}, where  $u_i \in H^1_{loc}(\mathbb{R}^n\setminus\Gamma)$ for every $i=1,...,m$. Assume that the matrix $A=(a_{ij})_{i,j=1,...,m}$, defined above, is symmetric,  $a_{ij} \geq 0$ for every $i,j=1,...,m$ and it satisfies \eqref{E:condition}. Moreover at least one of the $u_i$
	has a non-removable\footnote{Here we mean that the solution $(u_1,...,u_m)$ does not admit a smooth extension all over the whole space. Namely it is not possible to find $\tilde u_i \in H^1_{loc}(\mathbb{R}^n)$ with $u_i\equiv \tilde u_i$  in $\mathbb{R}^n\setminus\Gamma$, for some $i=1,...,m$.} singularity in the singular set $\Gamma$, where
	$\Gamma$ is a closed and proper subset of $\{x_1=0\}$ such that
	$$\underset{\R^n}{\operatorname{Cap}_2}(\Gamma)=0. $$
	\noindent Then, all the $u_i$ are symmetric with respect to the hyperplane $\{x_1=0\}$. The same conclusion is true if $\{x_1=0\}$ is
	replaced by any affine hyperplane. If at least one of the  $u_i$ has only a non-removable singularity at the origin for every $i=1,...,m$, then each $u_i$ is radially symmetric about the origin and radially decreasing.
\end{thm}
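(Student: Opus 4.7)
The plan is to adapt the Caffarelli--Gidas--Spruck/Terracini strategy to the cooperative system, exploiting the conformal invariance of the critical exponent together with Theorem~\ref{E:semilinearSym}. Since \eqref{E:criticalsingularequbdd} is posed on the whole space and no a~priori decay at infinity is available, I first perform a Kelvin inversion centered at a point $y=(0,y_2,\dots,y_n)\in\{x_1=0\}\setminus\Gamma$, which exists because $\Gamma$ is a proper closed subset of $\{x_1=0\}$. For each component I set
\begin{equation*}
\hat u_i(x):=|x-y|^{2-n}\,u_i\!\left(y+\frac{x-y}{|x-y|^2}\right),\qquad i=1,\dots,m.
\end{equation*}
Because every equation carries the same critical exponent and the matrix $A$ is common to all components, conformal invariance transfers the system \eqref{E:criticalsingularequbdd} to the same system for $(\hat u_1,\dots,\hat u_m)$ on $\R^n\setminus\hat\Gamma$, where $\hat\Gamma:=I_y(\Gamma)\cup\{y\}$ is again a closed subset of $\{x_1=0\}$ of zero $2$-capacity; the non-removable singularity hypothesis guarantees that $\hat\Gamma$ is non-removable for at least one $\hat u_i$, and the Kelvin decay $\hat u_i(x)=O(|x|^{2-n})$ at infinity is now at our disposal.

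The second step is the moving plane procedure in the $x_1$-direction applied to $(\hat u_1,\dots,\hat u_m)$. Let $T_\lambda:=\{x_1=\lambda\}$, $\Sigma_\lambda:=\{x_1<\lambda\}$, let $x^\lambda$ be the reflection of $x$ across $T_\lambda$, and set $\hat u_i^\lambda(x):=\hat u_i(x^\lambda)$. Since $\hat\Gamma\subset\{x_1=0\}$, both $\hat u_i$ and $\hat u_i^\lambda$ are smooth in $\Sigma_\lambda$ for every $\lambda<0$. Testing the weak formulation \eqref{E:weakforubdd} against $(\hat u_i-\hat u_i^\lambda)^+\,\eta_R^2\,\chi_\delta$, with $\eta_R$ a standard cutoff at infinity (available thanks to the Kelvin decay) and $\chi_\delta$ a cutoff supported away from $\hat\Gamma$ built from $\mathrm{Cap}_2(\hat\Gamma)=0$ exactly as in \cite{EFS,Dino}, one obtains, for every $i$,
\begin{equation*}
\int_{\Sigma_\lambda}\bigl|\nabla(\hat u_i-\hat u_i^\lambda)^+\bigr|^2\,dx\leq\sum_{j=1}^m a_{ij}\int_{\Sigma_\lambda}c_j^\lambda(x)\,(\hat u_j-\hat u_j^\lambda)^+(\hat u_i-\hat u_i^\lambda)^+\,dx,
\end{equation*}
with nonnegative weights $c_j^\lambda\in L^{n/2}(\Sigma_\lambda)$ controlled by $\hat u_j^{2^*-2}+(\hat u_j^\lambda)^{2^*-2}$. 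The cooperative hypothesis $a_{ij}\geq 0$ is what makes the coupling sign-consistent; Sobolev's inequality combined with the decay at infinity then starts the procedure simultaneously on every component for $\lambda\ll 0$.

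The third step is to push the critical value $\lambda_0:=\sup\{\lambda<0:\hat u_i\leq\hat u_i^\mu\text{ in }\Sigma_\mu,\ \forall i,\ \forall\mu\leq\lambda\}$ up to $0$. If $\lambda_0<0$, the strong maximum principle for cooperative linear systems (Troy \cite{troy}, de~Figueiredo \cite{defig1,defig2}) applied to the limit inequalities forces either $\hat u_i\equiv\hat u_i^{\lambda_0}$ for some $i$—impossible, since then the mirror of $\hat\Gamma\subset\{x_1=0\}$ in $\Sigma_{\lambda_0}$ would be singular for $\hat u_i$—or strict inequality $\hat u_i<\hat u_i^{\lambda_0}$ in $\Sigma_{\lambda_0}$ for every $i$, which contradicts the maximality of $\lambda_0$ via a narrow-strip smallness argument on the coupled weighted inequality above. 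Therefore $\lambda_0=0$, and running the argument from $+\infty$ yields $\hat u_i(x)=\hat u_i(x^0)$ in $\Sigma_0$ for every $i$; unwinding the Kelvin inversion gives symmetry of each $u_i$ with respect to $\{x_1=0\}$. An affine hyperplane is handled by choosing $y$ in it. If the non-removable singularity is concentrated at the origin, every hyperplane through $0$ is an axis of symmetry, so each $u_i$ is radially symmetric about the origin, and radial monotonicity follows from the strict inequality obtained in the moving plane procedure.

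The main obstacle I expect is the simultaneous construction of admissible test functions on $\R^n\setminus\hat\Gamma$: one must handle the singular set $\hat\Gamma$ (through the capacity cutoffs allowed by $\mathrm{Cap}_2(\hat\Gamma)=0$) together with the unboundedness of $\R^n$ (through the Kelvin-induced decay) while preserving the coupling between the $m$ components, so that the system version of the scheme used for Theorem~\ref{E:semilinearSym} and in \cite{EFS,Dino} can be applied. Everything else is a systematic adaptation.
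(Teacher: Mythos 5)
Your proposal is correct and follows essentially the same route as the paper: Kelvin inversion centered at a point of $\{x_1=0\}\setminus\Gamma$ (the paper translates so that $0\notin\Gamma$ and inverts at the origin, which is equivalent), capacity cutoffs near the singular set combined with cutoffs at infinity, the coupled $L^{n/2}$-weighted inequality with Sobolev's inequality to start and continue the moving planes, and the strong maximum principle plus an $L^{2^*}$-smallness argument on $\Sigma_{\lambda_0+\tau}\setminus K$ to reach $\lambda_0=0$. No substantive differences or gaps.
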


Another interesting elliptic system involving Sobolev critical exponents is the following one:

\begin{equation}\label{E:doublecriticalsingularequbdd}
\begin{cases}
\displaystyle -\Delta u\,=u^{2^*-1}+ \frac{\alpha}{2^*} u^{\alpha-1} v^\beta & \text{in}\quad\R^n\setminus \Gamma\\
\displaystyle -\Delta v\,=v^{2^*-1}+ \frac{\beta}{2^*} u^\alpha v^{\beta-1} &
\text{in}\quad\R^n\setminus \Gamma \\
u,v > 0 &  \text{in}\quad\R^n\setminus \Gamma,
\end{cases}
\end{equation}

where $\alpha, \beta > 1$, $\alpha+\beta=2^*:=\frac{2n}{n-2} \ (n \geq 3)$ 

The solutions to \eqref{E:doublecriticalsingularequbdd} are solitary waves for a system of coupled Gross--Pitaevskii equations. This type of systems arises, e.g., in the Hartree--Fock theory for double condensates, that is, Bose-Einstein condensates of two different hyperfine states which overlap in space. Existence results for these kind of systems are very complicated and the existence of nontrivial solutions is deeply related to the parameters $\alpha, \beta$ and $n$. System \eqref{E:doublecriticalsingularequbdd} with $\Gamma= \emptyset$ was studied in \cite{ambrosetti,bartsch1,bartsch2,Wang,sirakov,nicola2}. In particular in \cite{Wang} the authors show a uniqueness result, for least energy solutions, under suitable assumptions on the parameters $\alpha, \beta$ and $n$, while in \cite{pistoia} the authors study also the competitive setting, showing that the system admits infinitely many fully nontrivial solutions, which are not conformally equivalent. Motivated by their physical applications, weakly coupled elliptic systems have received
much attention in recent years, and there are many results for the cubic case where $\Gamma= \emptyset$, $\alpha = \beta =2$ and  $2^*$ is replaced by $4$ in low dimensions $n=3,4$, see e.g. \cite{ambrosetti, bartsch1, bartsch2, lin1, lin2, nicola1, nicola2}. Since our technique does not work  when $1 < \alpha < 2$ or $1< \beta <2$, here we study the case $\alpha, \beta \geq 2$ and $n=3$ or $n=4$, since we are assuming that $\alpha + \beta = 2^*$.

\begin{thm}\label{E:main3}
	Let $n=3$ or $n=4$ and let $(u,v) \in H^1_{loc}(\mathbb{R}^n\setminus\Gamma) \times H^1_{loc}(\mathbb{R}^n\setminus\Gamma)$
	be a solution to \eqref{E:doublecriticalsingularequbdd}. Assume that the solution $(u,v)$ has a non-removable\footnote{As above, we mean that the solution $(u,v)$ does not admit a smooth extension all over the whole space. Namely it is not possible to find $(\tilde u, \tilde v) \in H^1_{loc}(\mathbb{R}^n) \times H^1_{loc}(\mathbb{R}^n)$ with $u\equiv \tilde u$ or $v \equiv \tilde v$ in $\mathbb{R}^n\setminus\Gamma$.} singularity in the singular set $\Gamma$, where $\Gamma$ is a closed and proper subset of $\{x_1=0\}$ such that
	$$\underset{\R^n}{\operatorname{Cap}_2}(\Gamma)=0. $$
	Moreover let us assume that $\alpha, \beta \geq 2$ and that holds $\alpha + \beta = 2^*$. Then, $u$ and $v$ are symmetric with respect to the hyperplane $\{x_1=0\}$. The same conclusion is true if $\{x_1=0\}$ is
	replaced by any affine hyperplane. If at least one between $u$ and $v$ has only a non-removable singularity at the origin, then $(u,v)$ is radially symmetric about the origin and radially decreasing.
\end{thm}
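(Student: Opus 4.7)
The plan is to run the moving plane procedure of Theorem \ref{E:main2} simultaneously on $u$ and $v$, exploiting the cooperative nature of the coupling. A preliminary step is to apply the Kelvin transform $\hat u(x) = |x|^{2-n} u(x/|x|^2)$, $\hat v(x) = |x|^{2-n} v(x/|x|^2)$. Since $\alpha+\beta = 2^{*}$, both nonlinearities $u^{2^{*}-1}$ and $u^{\alpha-1}v^{\beta}$ have critical homogeneity $(n+2)/(n-2)$, hence the system is conformally invariant: $(\hat u,\hat v)$ solves the same system on $\R^{n}\setminus\hat\Gamma$, with $\hat\Gamma$ still a closed proper subset of $\{x_{1}=0\}$ of zero $2$-capacity (possibly augmented by the origin if the original singularity was at infinity). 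The transformed pair has enough decay at infinity to allow moving planes from $x_{1}=-\infty$.

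For $\lambda<0$ I would set $\Sigma_{\lambda}=\{x_{1}<\lambda\}$, the reflection $x^{\lambda} = (2\lambda - x_{1}, x_{2},\dots, x_{n})$, the reflected functions $u_{\lambda}(x) = \hat u(x^{\lambda})$, $v_{\lambda}(x) = \hat v(x^{\lambda})$, and the caps $w_{1} = (\hat u - u_{\lambda})^{+}$, $w_{2} = (\hat v - v_{\lambda})^{+}$. For $\lambda<0$ the reflected singular set lies in $\{x_{1}=2\lambda\}$ and is disjoint from $\hat\Gamma$. Plugging $w_{i}\vp _{\varepsilon}^{2}$ into the weak form of the equation for $\hat u - u_{\lambda}$ and for $\hat v - v_{\lambda}$, with $\vp _{\varepsilon}$ the standard cutoff from \cite{EFS,Dino} that vanishes on capacity-$\varepsilon$ neighborhoods of $\hat\Gamma \cup \hat\Gamma^{\lambda}$ and outside $B_{R}$, and then letting $\varepsilon\to 0$ using $\operatorname{Cap}_{2}(\hat\Gamma)=0$, I obtain a coupled inequality of the type
\begin{equation*}
\int_{\Sigma_{\lambda}}\bigl(|\nabla w_{1}|^{2}+|\nabla w_{2}|^{2}\bigr)\,dx \le C\int_{\Sigma_{\lambda}\cap B_{R}} W\,\bigl(w_{1}^{2}+w_{2}^{2}\bigr)\,dx,
\end{equation*}
where $W$ is the weight produced by linearizing the right-hand sides.

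The delicate point is to ensure $W\in L^{n/2}$, so that Hölder and Sobolev reabsorb the right-hand side into the left. For the pure critical terms $\hat u^{2^{*}-1}-u_{\lambda}^{2^{*}-1}$ this is classical; for the cross terms I would use the mean value theorem
\begin{equation*}
\hat u^{\alpha-1}\hat v^{\beta}-u_{\lambda}^{\alpha-1}v_{\lambda}^{\beta} \le C\Bigl( \max(\hat u,u_{\lambda})^{\alpha-2}\hat v^{\beta}\, w_{1} + u_{\lambda}^{\alpha-1}\max(\hat v,v_{\lambda})^{\beta-1}\, w_{2}\Bigr),
\end{equation*}
which makes sense up to the origin only if $\alpha,\beta\ge 2$, as assumed. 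Each resulting term of $W$ then has total degree $2^{*}-2$, so by Sobolev embedding it sits in $L^{n/2}_{\mathrm{loc}}$; the decay of $\hat u,\hat v$ at infinity makes $\|W\|_{L^{n/2}(\Sigma_{\lambda})}$ small when $|\lambda|$ is large, which forces $w_{1}=w_{2}=0$ in $\Sigma_{\lambda}$ and thus opens the process.

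To close, I would define $\lambda_{0}=\sup\{\lambda\le 0 : w_{1}\equiv w_{2}\equiv 0 \text{ on } \Sigma_{\mu} \text{ for all } \mu\le\lambda\}$ and argue by contradiction that $\lambda_{0}=0$: at $\lambda_{0}$ continuity gives $\hat u\le u_{\lambda_{0}}$ and $\hat v\le v_{\lambda_{0}}$, the strong maximum principle for cooperative elliptic systems (as in Troy \cite{troy}) yields strict inequality away from $\hat\Gamma \cup \hat\Gamma^{\lambda_{0}}$, and the Sobolev estimate above applied on a thin strip past $\lambda_{0}$ contradicts maximality, exactly as for Theorem \ref{E:main2}. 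Running the same argument from the right gives symmetry of $\hat u,\hat v$, and hence of $u,v$, with respect to $\{x_{1}=0\}$; when the only non-removable singularity is at the origin the argument is direction-independent, so one obtains radial symmetry and monotonicity. The main obstacle is exactly the linearized Sobolev estimate: it is the combination $\alpha,\beta\ge 2$ together with $\alpha+\beta=2^{*}$ (which forces $2^{*}\ge 4$, i.e.\ $n\le 4$) that keeps $W$ in $L^{n/2}$ without blowing up at zero, so neither hypothesis can be dropped with this technique.
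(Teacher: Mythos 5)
Your proposal follows essentially the same route as the paper: Kelvin transform to gain integrability at infinity, moving planes started from $\lambda\ll 0$ via capacity cutoffs near the singular set and a cutoff at infinity, convexity/mean-value estimates on both the pure critical terms and the cross terms (which is exactly where $\alpha,\beta\ge 2$ enters), H\"older plus Sobolev to reabsorb the linearized weight, and the compact-exhaustion argument to push $\lambda_0$ to $0$. The only detail worth making explicit is the preliminary translation reducing to $0\notin\Gamma$ (using that $\Gamma$ is a proper subset of the hyperplane), which guarantees that $\Gamma^*=K(\Gamma)$ is bounded; otherwise the argument is the paper's.
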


\noindent When the paper was completed we learned that the case of bounded domains was also considered in \cite{BVV} (see \cite{BVV0}), obtaining similar results.

\section{Notations and preliminary results} \label{E:notations}

We need to fix some notations. For a real number $\lambda$ we set
\begin{equation}\label{E:eq:sn2} \nonumber
\Omega_\lambda=\{x\in \Omega:x_1 <\lambda\}
\end{equation}
\begin{equation}\label{E:eq:sn3} \nonumber
x_\lambda= R_\lambda(x)=(2\lambda-x_1,x_2,\ldots,x_n)
\end{equation}
which is the reflection through the hyperplane $T_\lambda :=\{ x_1=
\lambda\}$. Also let
\begin{equation}\label{E:eq:sn4} \nonumber
a=\inf _{x\in\Omega}x_1.
\end{equation}

Since $\Gamma$ is compact and of zero capacity, $u_i$ is
defined a.e. on $\Omega$ and Lebesgue measurable on $\Omega$ for every $i=1,...,m$.
Therefore the functions \begin{equation}\label{E:eq:sn33} \nonumber  
u_{i,\lambda} := u_i \circ R_{\lambda}
\end{equation}
are Lebesgue measurable on $R_{\lambda}(\Omega)$. Similarly, $\nabla
u_i$ and $\nabla u_{i,\lambda}$ are Lebesgue
measurable on $\Omega$ and $R_{\lambda}(\Omega)$ respectively.

%Finally we set
%\begin{equation}\label{E:eq:sn33}
%u_\lambda(x)=u(x_\lambda)\,.
%\end{equation}

In the same spirit of \cite{EFS} we recall some useful properties of the $2$-capacity. It is easy to see that, if
$\underset{\R^n}{\operatorname{Cap}_2}(\Gamma)=0$, then
$\underset{\R^n}{\operatorname{Cap}_2}(R_{\lambda}(\Gamma))=0$.
Another consequence of our assumptions
%the fact that $\underset{\R^n}{\operatorname{Cap}_2}(R_{\lambda}(\Gamma))=0$
is that
$\underset{B^{\lambda}_{\epsilon}}{\operatorname{Cap}_2}(R_{\lambda}(\Gamma))=0$
for any open neighborhood $\mathcal{B}^{\lambda}_{\epsilon}$ of
$R_{\lambda}(\Gamma)$. Indeed, recalling that $\Gamma$ is a point if
$n=2$ while
 $\Gamma$ is closed with
$\underset{\R^n}{\operatorname{Cap}_2}(\Gamma)=0$ if $n\geq 3$ by
assumption, it follows that
$$\underset{\mathcal{B}^{\lambda}_{\epsilon}}{\operatorname{Cap}_2}(R_{\lambda}(\Gamma)):=
\inf \left\{ \int_{\mathcal{B}^{\lambda}_{\epsilon}} |\nabla \varphi
|^2 dx < + \infty \; : \; \varphi \geq 1 \ \text{in} \
\mathcal{B}^{\lambda}_{\delta}, \; \varphi \in \
C^{\infty}_c(\mathcal{B}^{\lambda}_{\epsilon}) \right\}=0,$$
\noindent for some neighborhood $\mathcal{B}^{\lambda}_{\delta}
\subset \mathcal{B}^{\lambda}_{\varepsilon}$ of
$R_{\lambda}(\Gamma)$. From this, it follows that there exists
$\varphi_{\varepsilon} \in \
C^{\infty}_c(\mathcal{B}^{\lambda}_{\epsilon})$ such that
$\varphi_{\varepsilon} \geq 1$ in $\mathcal{B}^{\lambda}_{\delta}$
and $\displaystyle \int_{\mathcal{B}^{\lambda}_{\epsilon}} |\nabla
\varphi_{\varepsilon} |^2 dx < \varepsilon$.

%We will need to construct a function $\psi_{\varepsilon} \in H^1(\mathcal{B}^{\lambda}_{\epsilon})$
Now we construct a function $\psi_{\varepsilon} \in C^{0,1}(\R^n,
[0,1])$ such that $\psi_{\varepsilon} = 1$ outside
$\mathcal{B}_{\varepsilon}^{\lambda}$, $\psi_{\varepsilon} = 0$ in
$\mathcal{B}_{\delta}^{\lambda}$ and
$$\int_{\R^n} |\nabla \psi_{\varepsilon} |^2 dx = \int_{\mathcal{B}^{\lambda}_{\epsilon}} |\nabla \psi_{\varepsilon}
|^2 dx < 4 \varepsilon.$$ \noindent To this end we consider the
following Lipschitz continuous function

$$T_1(s)= \begin{cases}
1 &  \text{if}\quad s \le 0  \\
-2s + 1 & \text{if}\quad 0 \le s \le \frac{1}{2} \\
0 &  \text{if}\quad s \ge \frac{1}{2}  \\
\end{cases}$$

and we set
\begin{equation} \label{E:test1}
\psi_{\varepsilon} := T_1 \circ \varphi_{\varepsilon}
\end{equation}
where we have extended  $\varphi_{\varepsilon}$ by zero outside
$\mathcal{B}_{\varepsilon}^{\lambda}$. Clearly $\psi_{\varepsilon}
\in C^{0,1}(\R^n), \ 0 \le \psi_{\varepsilon} \le 1 $ and

$$\int_{\mathcal{B}^{\lambda}_{\epsilon}} |\nabla \psi_{\varepsilon}
|^2 dx \leq 4 \int_{\mathcal{B}^{\lambda}_{\epsilon}} |\nabla
\varphi_{\varepsilon} |^2 dx  < 4 \varepsilon.$$

Now we  set $\gamma_\lambda:= \partial \Omega \cap T_{\lambda}$.
Recalling that $\Omega$ is convex, it is easy to  deduce that
$\gamma_\lambda$ is made of two points in dimension two. If, instead, 
$n\geq3$ then it follows that $\gamma_\lambda$ is a smooth manifold
of dimension $n-2$. Note in fact that locally $\partial\Omega$ is
the zero level set of a smooth function $g(\cdot)$ whose gradient is
not parallel to the $x_1$-direction since $\Omega$ is convex. Then
it is sufficient to observe that locally $ \partial \Omega \cap
T_{\lambda}\equiv\{g(\lambda,x')=0\}$ and use the \emph{implicit
function theorem} exploiting the fact that
$\nabla_{x'}g(\lambda,x')\neq0$. This implies that
$\underset{\R^n}{\operatorname{Cap}_2}(\gamma_\lambda)=0$, see  e.g.
\cite{evans}. So, as before,
$\underset{\mathcal{I}^\lambda_{\tau}}{\operatorname{Cap}_2}(\gamma_\lambda)=0$
for any open neighborhood of $\gamma_\lambda$ and then
%So by the definition of capacity,
there exists $\varphi_{\tau} \in C^{\infty}_c
(\mathcal{I}^\lambda_{\tau})$ such that $\varphi_{\tau} \geq 1$ in a
neighborhood $\mathcal{I}^\lambda_{\sigma}$ with
 $\gamma_\lambda\subset\mathcal{I}^\lambda_{\sigma} \subset
\mathcal{I}^\lambda_{\tau}$. As above, we set
\begin{equation}\label{E:test2}\phi_{\tau} :=T_1 \circ \varphi_{\tau}
\end{equation}
where we have extended  $\varphi_{\tau}$ by zero outside
$\mathcal{I}^\lambda_{\tau}$. Then, $ \phi_{\tau} \in C^{0,1}(\R^n),
0 \le \phi_{\tau} \le 1, \phi_{\tau} = 1$ outside
$\mathcal{I}^\lambda_{\tau}, \phi_{\tau} = 0$ in
$\mathcal{I}^\lambda_{\sigma}$ and

$$\int_{\R^n} |\nabla \phi_\tau
|^2 dx = \int_{\mathcal{I}^\lambda_{\tau}} |\nabla \phi_\tau |^2 dx
\leq 4  \int_{\mathcal{I}^\lambda_{\tau}} |\nabla \varphi_\tau |^2
dx  < 4 \tau.$$

\section{Proof of Theorem \ref{E:semilinearSym}}\label{E:semSym}
Let us set
$$w_{i,\lambda}^+=(u_i- u_{i,\lambda})^+$$
where $i=1,...,m$. We will prove the result by showing that, actually, it holds $w_{i,\lambda}^+ \equiv 0$ for $i=1,...,m$. 
To prove this, we have to perform the moving plane method.

In the following we will exploit the fact that $(u_{1,\lambda},...,u_{1,\lambda})$ is a solution to
\begin{equation}\label{E:weakforRef}
\int_{\Omega_\lambda} \nabla u_{i,\lambda} \nabla \vp_i \, dx = \int_{\Omega_\lambda} f_i(u_{1,\lambda},u_{2,\lambda},...,u_{m,\lambda}) \vp_i \, dx \qquad \forall \varphi_i \in C^1_c(\Omega_\lambda \setminus R_\lambda(\Gamma))
\end{equation}

for every $i=1,...,m$, where $\Omega_\lambda:=R_\lambda(\Omega)$.

We start by recalling the following helpful lemma, whose proof can be found in \cite{EFS}.

\begin{lem}[\cite{EFS}]\label{E:leaiuto} 
Let $\lambda \in (a,0)$ be such that $R_\lambda (\Gamma) \cap {\overline {\Omega}} = \emptyset$ and consider the function
$$\varphi_i\,:= \begin{cases}\, w_{i,\lambda}^+ \phi_{\tau}^2  & \text{in}\quad\Omega_\lambda, \\
0 &  \text{in}\quad \R^n\setminus \Omega_\lambda,
\end{cases}$$
where $\phi_{\tau}$ is as in \eqref{E:test2}, for $i=1,...,m$. Then, $\varphi_i \in C^{0,1}_c (\Omega) \cap C^{0,1}_c (R_{\lambda}(\Omega)), \, \varphi_i$ has compact support contained in $(\Omega \setminus \Gamma) \cap (R_{\lambda}(\Omega) \setminus R_{\lambda}(\Gamma)) \cap \{ x_1 \le \lambda \}$ and
\begin{equation}\label{E:gradvarphi}\nonumber	
\nabla \varphi_i = \phi_\tau^2 \nabla w_{i,\lambda}^+  + 2 \phi_\tau
w_{i,\lambda}^+ \nabla \phi_\tau \quad {\text {a.e. on }} \, \, \Omega \cup R_{\lambda}(\Omega),
\end{equation}
for every $i=1,...,m$. If $\lambda \in (a,0)$ is such that $R_\lambda (\Gamma) \cap {\overline {\Omega}} \neq \emptyset$, the same conclusions hold true for the function
$$\varphi_i\,:= \begin{cases}
\, w_{i,\lambda}^+ \psi_\varepsilon^2 \phi_{\tau}^2  & \text{in}\quad\Omega_\lambda, \\
0 &  \text{in}\quad \R^n \setminus \Omega_\lambda,
\end{cases}$$
where $\psi_\varepsilon$ is defined as in \eqref{E:test1} and $\phi_{\tau}$
as in \eqref{E:test2}, for every $i=1,...,m$. Furthermore, a.e. on $ \Omega \cup R_{\lambda}(\Omega)$,
\begin{equation}\label{E:gradvarphi2}
\nabla \varphi_i = \psi_\varepsilon^2 \phi_\tau^2 \nabla w_{i,\lambda}^+ +
2 w_{i,\lambda}^+(\psi_\varepsilon^2 \phi_\tau \nabla \phi_\tau + \psi_\varepsilon \phi_\tau^2  \nabla  \psi_\varepsilon).
\end{equation}
{In particular, $\varphi_i \in C^{0,1}(\overline{\Omega_\lambda})$, ${\varphi_i}_{\vert _{\partial \Omega_\lambda}} = 0$   and so $ \varphi_i \in H^1_0 (\Omega_\lambda)$, for every $i=1,...,m$.}
\end{lem}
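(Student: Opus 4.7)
The proof reduces to three claims: (i) $\varphi_i$ has compact support in the stated set, (ii) its gradient has the given expression a.e., and (iii) $\varphi_i \in H^1_0(\Omega_\lambda)$. The overarching strategy is to exploit the geometry: since $\Gamma\subset\{x_1=0\}$ and $\lambda\in(a,0)$, we have $\Gamma\cap\overline{\Omega_\lambda}=\emptyset$; and since $\Omega$ is convex and symmetric about $\{x_1=0\}$, the reflection satisfies $R_\lambda(\Omega_\lambda)\subset\Omega$, so $u_{i,\lambda}$ is defined on $\Omega_\lambda$ outside of $R_\lambda(\Gamma)$. Combined with Remark \ref{E:standellest}, both $u_i$ and $u_{i,\lambda}$ are $C^{1,\alpha}$ on $\overline{\Omega_\lambda}$ minus the bad sets $\gamma_\lambda$ and (when applicable) $R_\lambda(\Gamma)\cap\overline{\Omega}$.

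First I would decompose $\partial\Omega_\lambda$ into three pieces and read off $w_{i,\lambda}^+$ on each. On $\partial\Omega\cap\{x_1<\lambda\}$ the Dirichlet condition gives $u_i=0$, while $u_{i,\lambda}(x)=u_i(x_\lambda)>0$ because $x_\lambda$ lies in the interior of $\Omega$ by convexity/symmetry; hence $w_{i,\lambda}^+=0$ there. On $T_\lambda\cap\overline{\Omega}$ one has $x=x_\lambda$, so $u_i=u_{i,\lambda}$ and again $w_{i,\lambda}^+=0$. Only the corner $\gamma_\lambda=\partial\Omega\cap T_\lambda$ is genuinely problematic, and the cutoff $\phi_\tau^2$ constructed in \eqref{E:test2} — which vanishes on a full neighborhood of $\gamma_\lambda$ — forces $\varphi_i=0$ there. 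In the second case, the additional factor $\psi_\varepsilon^2$ from \eqref{E:test1} kills a neighborhood of $R_\lambda(\Gamma)\cap\overline{\Omega}$, where $u_{i,\lambda}$ may blow up. After these cutoffs the support of $\varphi_i$ lies in $\{x_1\le\lambda\}$, is contained in $\Omega$ (since it is separated from $\partial\Omega$), in $R_\lambda(\Omega)$ (similarly), and avoids both $\Gamma$ (automatic since $\{x_1\le\lambda\}\cap\Gamma=\emptyset$) and $R_\lambda(\Gamma)$, yielding the required compact inclusion.

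On this support both $u_i$ and $u_{i,\lambda}$ are $C^{1,\alpha}$, so $w_{i,\lambda}^+=(u_i-u_{i,\lambda})^+$ is Lipschitz; since products of Lipschitz functions are Lipschitz, $\varphi_i\in C^{0,1}_c$ in both cases. The gradient formulas then follow from the Sobolev identity $\nabla(f^+)=\chi_{\{f>0\}}\nabla f$ a.e.\ together with the Leibniz rule, applied to $\varphi_i=w_{i,\lambda}^+\phi_\tau^2$ or $\varphi_i=w_{i,\lambda}^+\psi_\varepsilon^2\phi_\tau^2$. Finally, the Lipschitz regularity, the compact support inside $\overline{\Omega_\lambda}$, and the vanishing on $\partial\Omega_\lambda$ (verified piecewise above) give $\varphi_i\in H^1_0(\Omega_\lambda)$ by standard approximation.

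The main subtlety is the geometric corner $\gamma_\lambda$, where $w_{i,\lambda}^+$ may not extend continuously nor have a bounded gradient, so the naive extension by zero across $T_\lambda$ could fail to be Sobolev. The construction of $\phi_\tau$ in Section \ref{E:notations} — relying on $\operatorname{Cap}_2(\gamma_\lambda)=0$ — is engineered precisely to absorb this defect with arbitrarily small Dirichlet energy, which is what will later permit the moving-plane estimates to pass to the limit $\tau\to 0$. A completely analogous remark applies to $R_\lambda(\Gamma)\cap\overline{\Omega}$ via $\psi_\varepsilon$ in the second case.
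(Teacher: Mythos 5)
Your argument is correct and follows essentially the same route as the source the paper defers to (the lemma is quoted from \cite{EFS} without proof here): decompose $\partial \Omega_\lambda$, observe that $w_{i,\lambda}^+$ vanishes on $T_\lambda\cap\Omega$ and near $\partial\Omega\cap\{x_1<\lambda\}$ (since $u_i=0\le u_{i,\lambda}$ there), use $\phi_\tau$ and $\psi_\varepsilon$ to cut off the only genuinely singular sets $\gamma_\lambda$ and $R_\lambda(\Gamma)\cap\overline{\Omega}$, and then invoke local $C^{1}$ regularity, the a.e.\ chain rule for positive parts, and the Leibniz rule. The only point worth tightening is the passage from ``$w_{i,\lambda}^+=0$ on $\partial\Omega\cap\{x_1<\lambda\}$'' to ``the support is separated from $\partial\Omega$'', which uses that $x_\lambda$ lies in the open set $\Omega$ so that $u_i-u_{i,\lambda}<0$ on a full neighborhood, as you implicitly do.
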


Now we are ready to prove an essential tool that we will use to start the moving plane procedure.
\begin{lem}\label{E:leaiuto2}
Under the assumptions of Theorem \ref{E:semilinearSym}, let $a<\lambda<0$. Then $w_{i, \lambda}^+\in H^1_0(\Omega_\lambda)$ for every $i=1,...,m$ and
\begin{equation}\label{E:gradbdd}
\sum_{i=1}^m\int_{\Omega_\lambda}|\nabla w_{i,\lambda}^+|^2\,dx\leq \frac{m}{2}\sum_{i=1}^m (1+C_i^2) \|u_i\|^2_{L^\infty(\Omega_\lambda)} \vert \Omega\vert \,.
\end{equation}
{where $\vert \Omega \vert$ denotes the $n-$dimensional Lebesgue measure of $ \Omega$ and $C_i$ is a positive constant depending only on $f_i$. }
\end{lem}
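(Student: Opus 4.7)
\emph{Plan of proof.} The strategy is a Caccioppoli-type energy estimate obtained by plugging the admissible test function of Lemma~\ref{E:leaiuto} into the subtracted weak formulation, and then letting the cut-off parameters $\varepsilon$ and $\tau$ tend to $0$. The key initial observation is that, since $\lambda<0$ and $\Gamma\subset\{x_1=0\}$, the closure $\overline{\Omega_\lambda}$ sits at positive distance from $\Gamma$; together with Remark~\ref{E:standellest} this gives $u_i\in C(\overline{\Omega_\lambda})$, so $0\le w_{i,\lambda}^+\le \|u_i\|_{L^\infty(\Omega_\lambda)}$ pointwise. Moreover, the trace of $w_{i,\lambda}^+$ on $\partial\Omega_\lambda$ vanishes: on $\partial\Omega\cap\overline{\Omega_\lambda}$ we have $u_i=0$ and $u_{i,\lambda}\ge 0$, while on $T_\lambda\cap\overline\Omega$ we have $u_i=u_{i,\lambda}$. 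Once the uniform $H^1$-estimate is obtained, these two facts combine to give $w_{i,\lambda}^+\in H^1_0(\Omega_\lambda)$.

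First I would test \eqref{E:weakfor} and \eqref{E:weakforRef} against $\varphi_i:=w_{i,\lambda}^+\psi_\varepsilon^2\phi_\tau^2$ (admissible by Lemma~\ref{E:leaiuto}) and subtract, obtaining
\begin{equation*}
\int_{\Omega_\lambda}\nabla(u_i-u_{i,\lambda})\cdot\nabla\varphi_i\,dx=\int_{\Omega_\lambda}\bigl[f_i(u_1,\dots,u_m)-f_i(u_{1,\lambda},\dots,u_{m,\lambda})\bigr]\varphi_i\,dx.
\end{equation*}
Expanding $\nabla\varphi_i$ by \eqref{E:gradvarphi2} and applying Young's inequality with parameter $\tfrac14$ to the two cross terms absorbs them into $\tfrac12\int\psi_\varepsilon^2\phi_\tau^2|\nabla w_{i,\lambda}^+|^2$, paying only $4\|u_i\|_{L^\infty(\Omega_\lambda)}^2(\|\nabla\psi_\varepsilon\|_{L^2}^2+\|\nabla\phi_\tau\|_{L^2}^2)=O(\varepsilon+\tau)$ thanks to the capacitary estimates recalled in Section~\ref{E:notations}.

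The right-hand side I would treat using cooperativity. A mean value argument writes
$f_i(u)-f_i(u_\lambda)=\sum_{j=1}^m b_j^i(x)\,(u_j-u_{j,\lambda})$ with $b_j^i\ge 0$ for $j\ne i$ by $(h_{f_i})(ii)$. On $\{w_{i,\lambda}^+>0\}$ one has $u_{i,\lambda}<u_i\le \|u_i\|_{L^\infty(\Omega_\lambda)}$, so the diagonal contribution $b_i^i(w_{i,\lambda}^+)^2$ is directly controlled by $\|u_i\|_{L^\infty(\Omega_\lambda)}^2$; for $j\ne i$, the sign of $b_j^i$ lets me use the trivial bound $(u_j-u_{j,\lambda})\,w_{i,\lambda}^+\le w_{j,\lambda}^+ w_{i,\lambda}^+$, which is automatically supported on $\{u_{j,\lambda}\le\|u_j\|_{L^\infty(\Omega_\lambda)}\}$. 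A further application of $ab\le\tfrac12(a^2+b^2)$ to the product $|f_i(u)-f_i(u_\lambda)|\,w_{i,\lambda}^+$, together with the Lipschitz bound $|f_i(u)-f_i(u_\lambda)|\le C_i\,w_{i,\lambda}^+$ available on $\{w_{i,\lambda}^+>0\}$ (where the relevant arguments of $\partial_j f_i$ remain in a bounded range), produces the factor $\tfrac12(1+C_i^2)\|u_i\|_{L^\infty(\Omega_\lambda)}^2|\Omega|$ for each index $i$; summing over $i$ yields the claimed bound.

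The main obstacle is precisely the control of the right-hand side, since a priori $b_j^i$ involves $\partial_j f_i$ evaluated along segments joining $u$ and $u_\lambda$ that pass near $R_\lambda(\Gamma)$ where $u_\lambda$ blows up. Cooperativity is what saves us: the only non-signed term is the diagonal one, bounded directly by $\|u_i\|_\infty^2$, and every off-diagonal term is forced into the region where all arguments stay bounded by the solution's $L^\infty$-norms, so the coefficient $C_i$ may be chosen depending only on $f_i$. Having secured the uniform-in-$(\varepsilon,\tau)$ estimate, Fatou's lemma on the left and dominated convergence on the right let me pass to the limit $\varepsilon,\tau\to 0$, simultaneously delivering the $H^1_0(\Omega_\lambda)$-membership of $w_{i,\lambda}^+$ and the claimed gradient bound \eqref{E:gradbdd}.
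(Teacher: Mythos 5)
Your proposal follows essentially the same route as the paper's proof: the same test functions $w_{i,\lambda}^+\psi_\varepsilon^2\phi_\tau^2$ from Lemma~\ref{E:leaiuto}, the same subtraction of the two weak formulations with Young absorption of the cutoff cross terms via the capacitary smallness, the same telescoping/cooperativity treatment of $f_i(u_1,\dots,u_m)-f_i(u_{1,\lambda},\dots,u_{m,\lambda})$ leading to $\sum_j C_j\,w_{j,\lambda}^+w_{i,\lambda}^+$, and the same limit passage in $\varepsilon,\tau$ to get both \eqref{E:gradbdd} and the $H^1_0(\Omega_\lambda)$ membership. The only slip is the phrase ``Lipschitz bound $|f_i(u)-f_i(u_\lambda)|\le C_i\,w_{i,\lambda}^+$'': a two-sided bound of this kind is false near $R_\lambda(\Gamma)$ where the $u_{j,\lambda}$ may blow up, and only the one-sided estimate $[f_i(u)-f_i(u_\lambda)]\,w_{i,\lambda}^+\le\sum_{j}C_j\,w_{j,\lambda}^+w_{i,\lambda}^+$ holds --- which is exactly what your preceding sentences on the sign of the off-diagonal terms establish and what the paper actually uses.
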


\begin{proof}
For $\psi_\varepsilon$ as in \eqref{E:test1} and $\phi_{\tau}$
as in \eqref{E:test2}, we consider the functions $\varphi_i$ defined in Lemma \ref{E:leaiuto}. In view of the properties of $\varphi_i$, stated in Lemma \ref{E:leaiuto}, and a standard density argument, we can use $\varphi_i$ as test function in
\eqref{E:weakfor} and \eqref{E:weakforRef} so that, subtracting, we get

\begin{equation}\label{E:subtraction}
\begin{split}
\int_{\Omega_\lambda}|\nabla w_{i,\lambda}^+ |^2\psi_\varepsilon^2
\phi_{\tau}^2 \,dx&= -2\int_{\Omega_\lambda}\nabla w_{i,\lambda}^+ \nabla \psi_\varepsilon w_{i,\lambda}^+ \psi_\varepsilon \phi_{\tau}^2 \,dx - 2\int_{\Omega_\lambda}\nabla w_{i,\lambda}^+ \nabla \phi_\tau w_{i,\lambda}^+ \psi_\varepsilon^2 \phi_{\tau}\,dx\\
&+\int_{\Omega_\lambda} [f_i(u_1,u_2,...,u_m)-f_i(u_{1,\lambda},u_{2,\lambda},...,u_{m,\lambda})] w_{i,\lambda}^+ \psi_\varepsilon^2
\phi_{\tau}^2\, dx.\\
\end{split}
\end{equation}

Exploiting Young's inequality in the right hand side of \eqref{E:subtraction}, we get that
\begin{equation}\label{E:stima1}
\begin{split}
\int_{\Omega_\lambda}|\nabla w_{i,\lambda}^+ |^2\psi_\varepsilon^2
\phi_{\tau}^2 \,dx &\leq \frac{1}{4}\int_{\Omega_\lambda}|\nabla
w_{i,\lambda}^+|^2\psi_\varepsilon^2 \phi_{\tau}^2 \,dx
+4\int_{\Omega_\lambda}|\nabla \psi_\varepsilon|^2 (w_{i,\lambda}^+)^2
\phi_{\tau}^2
\,dx \\
&+\frac{1}{4}\int_{\Omega_\lambda}|\nabla
w_{i,\lambda}^+|^2\psi_\varepsilon^2 \phi_{\tau}^2 \,dx +
4\int_{\Omega_\lambda}|\nabla \phi_\tau|^2 (w_{i,\lambda}^+)^2
\psi_\varepsilon^2\, dx\\
&+\int_{\Omega_\lambda} [f_i(u_1,u_2,...,u_m)-f_i(u_{1,\lambda},u_{2,\lambda},...,u_{m,\lambda})] w_{i,\lambda}^+ \psi_\varepsilon^2
\phi_{\tau}^2\, dx.\\
\end{split}
\end{equation}

The last term of the right hand side of \eqref{E:stima1} can be rewritten as follows
\begin{equation}\label{E:estimateF1}
\begin{split}
&\int_{\Omega_\lambda} [f_i(u_1,u_2,...,u_m)-f_i(u_{1,\lambda},u_{2,\lambda},...,u_{m,\lambda})] w_{i,\lambda}^+ \psi_\varepsilon^2
\phi_{\tau}^2\, dx \\
=&\int_{\Omega_\lambda} [f_i(u_1,u_2,...,u_m)\pm f_i(u_{1,\lambda},u_2,...,u_m) -  f_i(u_{1,\lambda},u_{2,\lambda},...,u_{m,\lambda})] w_{i,\lambda}^+ \psi_\varepsilon^2 \phi_{\tau}^2\, dx\\
=& \int_{\Omega_\lambda} [f_i(u_1,u_2,...,u_m)- f_i(u_{1,\lambda},u_2,...,u_m) +  f_i(u_{1,\lambda},u_2,...,u_m) - f_i(u_{1,\lambda},u_{2,\lambda},u_3,...,u_m)\\ &+f_i(u_{1,\lambda},u_{2,\lambda},u_3,...,u_m) - \cdots -  f_i(u_{1,\lambda},u_{2,\lambda},...,u_{m,\lambda})] w_{i,\lambda}^+ \psi_\varepsilon^2 \phi_{\tau}^2\, dx\\
\end{split}
\end{equation}

Using the fact that $f_i$ are $\mathcal{C}^1$ functions $(h_{f_i})-(i)$ and they satisfy $(h_{f_i})-(ii)$, by \eqref{E:estimateF1} we have

\begin{equation}\label{E:estimateF2}
\int_{\Omega_\lambda} [f_i(u_1,u_2,...,u_m)-f_i(u_{1,\lambda},u_{2,\lambda},...,u_{m,\lambda})] w_{i,\lambda}^+ \psi_\varepsilon^2
\phi_{\tau}^2\, dx \leq \sum_{j=1}^m C_j (f_j)\int_{\Omega_\lambda}  w_{j,\lambda}^+ w_{i,\lambda}^+ \psi_\varepsilon^2 \phi_{\tau}^2\, dx.
\end{equation}

Now compiling all the previous estimates and exploiting Young's inequality in the right hand side of \eqref{E:estimateF2} we obtain 
\begin{equation}\label{E:stima2}
\begin{split}
\int_{\Omega_\lambda}|\nabla w_{i,\lambda}^+ |^2\psi_\varepsilon^2
\phi_{\tau}^2 \,dx &\leq 8\int_{\Omega_\lambda}|\nabla \psi_\varepsilon|^2 (w_{i,\lambda}^+)^2 \phi_{\tau}^2 \,dx + 8\int_{\Omega_\lambda}|\nabla \phi_\tau|^2 (w_{i,\lambda}^+)^2 \psi_\varepsilon^2\, dx\\
&+ m \int_{\Omega_\lambda}  (w_{i,\lambda}^+)^2 \psi_\varepsilon^2 \phi_{\tau}^2\, dx +\sum_{j=1}^m C_j^2 \int_{\Omega_\lambda}  (w_{j,\lambda}^+)^2 \psi_\varepsilon^2 \phi_{\tau}^2\, dx.\\
\end{split}
\end{equation}

By \eqref{E:stima2} summing with respect to $i$ we get

\begin{equation}\label{E:stima3} \nonumber
\begin{split}
\sum_{i=1}^m\int_{\Omega_\lambda}|\nabla w_{i,\lambda}^+ |^2\psi_\varepsilon^2
\phi_{\tau}^2 \,dx &\leq 8 \sum_{i=1}^m \left(\int_{\Omega_\lambda}|\nabla \psi_\varepsilon|^2 (w_{i,\lambda}^+)^2 \phi_{\tau}^2 \,dx + \int_{\Omega_\lambda}|\nabla \phi_\tau|^2 (w_{i,\lambda}^+)^2 \psi_\varepsilon^2\, dx\right)\\
&+\frac{m}{2}\sum_{i=1}^m (1+C_i^2) \int_{\Omega_\lambda}  (w_{i,\lambda}^+)^2 \psi_\varepsilon^2 \phi_{\tau}^2\, dx.\\
\end{split}
\end{equation}

Taking into account the properties of
$\psi_\varepsilon$ and $\phi_\tau$, we see that
\begin{equation}\label{E:bbeg} \nonumber
\int_{\Omega_\lambda}|\nabla \psi_\varepsilon|^2\,dx=
\int_{\Omega_\lambda\cap(\mathcal B_\varepsilon^\lambda\setminus
	\mathcal B_{\delta}^\lambda)}|\nabla \psi_\varepsilon|^2\,dx < 4
\varepsilon,
\end{equation}
\begin{equation}\label{E:bbeg2} \nonumber
\int_{\Omega_\lambda}|\nabla \phi_\tau|^2\,dx=
\int_{\Omega_\lambda\cap(\mathcal I^\lambda_\tau \setminus \mathcal
	I^\lambda_{\sigma})}|\nabla \phi_\tau|^2\,dx < 4 \tau,
\end{equation}
which combined with $0\leq w_{i,\lambda}^+\leq u_i$, for every $i=1,...,m$, 
immediately lead to
\begin{equation}\nonumber
\begin{split}
\sum_{i=1}^m \int_{\Omega_\lambda}|\nabla w_{i,\lambda}^+|^2\psi_\varepsilon^2
\phi_{\tau}^2 \,dx \leq 32(\varepsilon +\tau) \sum_{i=1}^m \|u_i\|^2_{L^\infty
(\Omega_\lambda)} +\frac{m}{2}\sum_{i=1}^m (1+C_i^2) \|u_i\|^2_{L^\infty
(\Omega_\lambda)} \vert \Omega\vert \,.
\end{split}
\end{equation}

By Fatou Lemma, as $\varepsilon$ and $\tau$ tend to zero, we have \eqref{E:gradbdd}.  To conclude we note that $ \varphi_i \to  w_{i,\lambda}^+$ in $L^2(\Omega)$,  as $\varepsilon$ and $\tau$ tend to zero, by definition of $ \varphi_i $ for every $i=1,...,m$. Also, $\nabla \varphi \to \nabla w_{i,\lambda}^+$ in $L^2(\Omega_{\lambda})$, by \eqref{E:gradvarphi2}. Therefore, $ w_{i,\lambda}^+$ in $H^1_0(\Omega_\lambda)$, since {$\varphi_i \in H^1_0(\Omega_\lambda)$} again by Lemma \ref{E:leaiuto}, for every $i=1,...,m$, which concludes the proof.

\end{proof}

\begin{proof}[Proof of Theorem \ref{E:semilinearSym}]
We define
\begin{equation}\nonumber
\Lambda_0=\{a<\lambda<0 : u_i\leq
u_{i,t}\,\,\,\text{in}\,\,\,\Omega_t\setminus
R_t(\Gamma)\,\,\,\text{for all $t\in(a,\lambda]$ and for every i=1,...,m.}\} 
\end{equation}
and to start with the moving plane procedure, we have to prove that
	
\emph{Step 1 :  $\Lambda_0 \neq \emptyset$}. Fix $ \lambda_0 \in (a,0)$ such that	$R_{\lambda_0}(\Gamma) \subset \Omega^c$, then for every $a< \lambda < \lambda_0$, we also have that $R_\lambda(\Gamma)\subset \Omega^c$. 
For any $ \lambda$ in this set we consider, on the domain $\Omega$, the function $\varphi_i\,:=\, w_{i,\lambda}^+ \phi_{\tau}^2 \chi_{\Omega_\lambda},$ where $\phi_{\tau}$ is as in \eqref{E:test2} and we proceed as in the proof of Lemma \ref{E:leaiuto2}. That is, by Lemma \ref{E:leaiuto} and a density argument, we can use $\varphi_i$ as test function in
\eqref{E:weakfor} and \eqref{E:weakforRef} so that, subtracting, we get
\begin{equation}\label{E:subtraction2} \nonumber
\begin{split}
\int_{\Omega_\lambda}|\nabla w_{i,\lambda}^+ |^2
\phi_{\tau}^2 \,dx =&  - 2\int_{\Omega_\lambda}\nabla w_{i,\lambda}^+ \nabla \phi_\tau w_{i,\lambda}^+ \phi_{\tau}\,dx\\
&+\int_{\Omega_\lambda} [f_i(u_1,u_2,...,u_m)-f_i(u_{1,\lambda},u_{2,\lambda},...,u_{m,\lambda})] w_{i,\lambda}^+
\phi_{\tau}^2\, dx.\\
\end{split}
\end{equation}
	
Exploiting Young's inequality and the assumption $(h_{f_i})$, then we get that
\begin{equation}\nonumber
\begin{split}
\int_{\Omega_\lambda}|\nabla w_{i,\lambda}^+|^2 \phi_{\tau}^2 \,dx &\leq
\frac{1}{2}\int_{\Omega_\lambda}|\nabla w_{i,\lambda}^+|^2 \phi_{\tau}^2
\,dx + 2\int_{\Omega_\lambda}|\nabla \phi_\tau|^2 (w_{i,\lambda}^+)^2 dx\\
&+ \sum_{i=1}^m C_j \int_{\Omega_\lambda}w_{j,\lambda}^+ w_{i,\lambda}^+ \phi_{\tau}^2\,dx.
\end{split}
\end{equation}

Taking into account the properties of $\phi_\tau$, we see that
\begin{equation}\label{E:bbeg1} \nonumber
\int_{\Omega_\lambda}|\nabla \phi_\tau|^2 (w_{i,\lambda}^+)^2 dx \leq \|
u_i \|_{L^\infty (\Omega_\lambda)}^2 \int_{\Omega_\lambda \cap (\mathcal
I^\lambda_\tau \setminus \mathcal I^\lambda_{\sigma})}|\nabla
\phi_\tau|^2\,dx\leq 4 \| u_i \|_{L^\infty(\Omega_\lambda)}^2 \cdot
\tau.
\end{equation}
We therefore deduce that
\begin{equation}\nonumber
\begin{split}
\sum_{i=1}^m \int_{\Omega_\lambda}|\nabla w_{i,\lambda}^+|^2 \phi_{\tau}^2 \,dx \leq
16 \tau \sum_{i=1}^m\|u_i\|_{L^\infty (\Omega_\lambda)} +
\frac{m}{2} \sum_{i=1}^m (1+C_i^2)
\int_{\Omega_\lambda}(w_{i,\lambda}^+)^2 \phi_{\tau}^2\,dx.
\end{split}
\end{equation}
By Fatou Lemma, as $\tau$ tends to zero, we have
\begin{equation}\label{E:ff}
\begin{split}
\sum_{i=1}^m \int_{\Omega_\lambda}|\nabla w_{i,\lambda}^+|^2 \,dx &\leq
\frac{m}{2} \sum_{i=1}^m  (1+C_i^2)
\int_{\Omega_\lambda}(w_{i,\lambda}^+)^2\,dx\\
& \leq  \frac{m}{2} \sum_{i=1}^m (1+C_i^2) C_{i,p}^2(\Omega_\lambda)
\int_{\Omega_\lambda}|\nabla w_{i,\lambda}^+|^2  \,dx,
\end{split}
\end{equation}
where $C_{i,p}(\cdot)$ is the Poincar\'e constant (in the Poincar\'e inequality in $H^1_0(\Omega_\lambda)$). Since $C_{i,p}(\Omega_\lambda) \to 0$ as $ \lambda \to a$, we can find $ \lambda_1 \in (a, \lambda_0)$, such that 
$$  C_{i,p}(\Omega_\lambda)< \frac{1}{\sqrt{m (1+C_i^2)}}\, \qquad \forall \lambda \in (a, \lambda_1) \; \text{and for every $i=1,...,m$},$$ so that by \eqref{E:ff}, we
deduce that
$$\int_{\Omega_\lambda}|\nabla w_{i,\lambda}^+|^2  \,dx \leq 0 \qquad \forall \lambda \in (a, \lambda_1) \; \text{and for every $i=1,...,m$},$$ proving
that $u_i \leq u_{i,\lambda}$ in $\Omega_\lambda \setminus R_\lambda(\Gamma)$ for $\lambda$ close to $a$, which implies the desired conclusion $\Lambda_0 \neq \emptyset$.

Now we can set
\begin{equation}\nonumber
\lambda_0=\sup\,\Lambda_0.
\end{equation}
	
\emph{Step 2: here we show that $\lambda_0=0$.} To this end we assume that
$\lambda_0<0$ and we reach a contradiction by proving that $u_i\leq
u_{i,\lambda_0+\nu}$ in $\Omega_{\lambda_0+\nu}\setminus
R_{\lambda_0+\nu}(\Gamma)$ for any $0<\nu<\bar\nu$ for some small
$\bar\nu>0$ and for every $i=1,...,m$. By continuity we know that $u_i \leq u_{i, \lambda_0}$ in $\Omega_{\lambda_0}\setminus R_{\lambda_0}(\Gamma)$ for every $i=1,...,m$. Since $\Omega$ is convex in the $x_1-$direction and the set $ R_{\lambda_0}(\Gamma)$ lies in the hyperplane of equation $\{\, x_1 = - 2 \lambda_0 \, \}$, we see that $\Omega_{\lambda_0}\setminus R_{\lambda_0}(\Gamma)$ is open and connected. Moreover, using $(h_{f_i})-(ii)$ we have that
\[
\begin{split}
-\Delta(u_i-u_{i,\lambda_0}) &= f(u_1,...,u_m) - f(u_{1,\lambda_0},...,u_{m,\lambda_0}) \\
&= \left(f(u_1,...,u_m) - f(u_{1,\lambda_0},...,u_m)\right) + \cdots\\
&\cdots  + \left(f(u_{1,\lambda_0},...,u_m) - f(u_{1,\lambda_0},...,u_{m,\lambda_0}) \right) \leq 0.
\end{split}
\]
Therefore, by the strong maximum principle we deduce that
$u_i< u_{i,\lambda_0}$ in $\Omega_{\lambda_0}\setminus R_{\lambda_0}(\Gamma)$ and for every $i=1,...,m$. 
	
{Now, note that for $K\subset \Omega_{\lambda_0}\setminus R_{\lambda_0}(\Gamma)$, there is $\nu=\nu(K,\lambda_0)>0$, sufficiently small, such that $K \subset \Omega_{\lambda} \setminus R_{\lambda}(\Gamma)$ for every $ \lambda \in [\lambda_0, \lambda_0 + \nu].$ Consequently $u_i$ and $u_{i,\lambda}$ are well defined on $K$ for every $ \lambda \in [\lambda_0, \lambda_0 + \nu]$ and for every $i=1,...,m$. Hence, by the uniform continuity of the functions $ g_i(x,\lambda) := u_i(x) - u_i(2\lambda-x_1,x^{'}) $ on the compact set $K \times [\lambda_0, \lambda_0 + \nu]$ we can ensure that  $K \subset \Omega_{\lambda_0 + \nu} \setminus R_{\lambda_0 + \nu}(\Gamma)$ and
$u_i< u_{i,\lambda_0+\nu}$ in $K$ for any $0 \le \nu<\bar\nu$, for some $\bar\nu = \bar\nu(K,\lambda_0)>0$ small. Clearly we can also assume that $\bar\nu < \frac{\vert \lambda_0 \vert}{4}. $}

Let us consider $\psi_\varepsilon$ constructed in such a way that it vanishes in a neighborhood  of $R_{\lambda_0 + \nu}(\Gamma)$ and $\phi_{\tau}$ constructed in such a way it vanishes in a neighborhood  of $\gamma_{\lambda_0+\nu}=\partial \Omega \cap T_{\lambda_0+ \nu}$. {As shown in the proof of Lemma \ref{E:leaiuto2}, the functions} $$\varphi_i\,:= \begin{cases} \, \, w_{i,\lambda_0+\nu}^+\psi_\varepsilon^2 \phi_{\tau}^2 \, & \text{in}\quad {\Omega_{\lambda_0+\nu}} \\
0 &  \text{in}\quad {\R^n \setminus \Omega_{\lambda_0+\nu}} 
\end{cases}$$
{are such that $ \varphi_i \to  w_{i,\lambda_0+\nu}^+$ in $H^1_0(\Omega_{\lambda_0+\nu})$,  as $\varepsilon$ and $\tau$ tend to zero. Moreover, $\varphi_i \in C^{0,1}(\overline{\Omega_{\lambda_0+\nu}})$ and ${\varphi_i}_{\vert _{\partial \Omega_{\lambda_0+\nu}}} = 0$, by Lemma \ref{E:leaiuto}, and $ \varphi_i = 0 $ on an open neighborhood of $K$, by the above argument.
Therefore, $ \varphi_i \in H^1_0 (\Omega_{\lambda_0+\nu} \setminus K)$ and thus, also $w_{i,\lambda_0+\nu}^+$ belongs to  $H^1_0(\Omega_{\lambda_0+\nu} \setminus K)$. We also note that $ \nabla w_{i,\lambda_0+\nu}^+ = 0$ on an open neighborhood of $K$.}
	
\noindent {Now we argue as in Lemma \ref{E:leaiuto2} and we plug $\varphi_i$ as test function in
\eqref{E:weakfor} and \eqref{E:weakforRef} so that, by subtracting, we get  }
\begin{equation}\label{E:subtraction3} \nonumber
\begin{split}
\int_{\Omega_{\lambda_0+\nu}}|\nabla w_{i,\lambda_0+\nu}^+ &|^2
\psi_\varepsilon^2 \phi_{\tau}^2 \,dx \\
&=  - 2\int_{\Omega_{\lambda_0+\nu}}\nabla w_{i,\lambda_0+\nu}^+ \nabla \psi_\varepsilon w_{i,\lambda_0+\nu}^+ \psi_\varepsilon \phi_{\tau}^2 \,dx\\
& - 2\int_{\Omega_{\lambda_0+\nu}}\nabla w_{i,\lambda_0+\nu}^+ \nabla \phi_\tau w_{i,\lambda_0+\nu}^+ \psi_\varepsilon^2 \phi_{\tau}\,dx\\
& +\int_{\Omega_{\lambda_0+\nu}} [f_i(u_1,u_2,...,u_m)-f_i(u_{1,\lambda_0+\nu},u_{2,\lambda_0+\nu},...,u_{m,\lambda_0+\nu})] w_{i,\lambda_0+\nu}^+\psi_\varepsilon^2  \phi_{\tau}^2\, dx.\\
\end{split}
\end{equation}
Therefore, taking into account the properties of $ w_{\lambda_0+\nu}^+$ and $ \nabla w_{\lambda_0+\nu}^+$ we also have 	

\begin{equation}\nonumber
\begin{split} 
\int_{\Omega_{\lambda_0+\nu}\setminus K}|\nabla &w_{i,\lambda_0+\nu}^+ |^2
\psi_\varepsilon^2 \phi_{\tau}^2 \,dx \\
&\leq-2\int_{\Omega_{\lambda_0+\nu}\setminus K}\nabla w_{i,\lambda_0+\nu}^+\nabla \psi_\varepsilon w_{i,\lambda_0+\nu}^+ \psi_\varepsilon \phi_{\tau}^2 \,dx \\
&- 2\int_{\Omega_{\lambda_0+\nu}\setminus K}\nabla
w_{i,\lambda_0+\nu}^+\nabla \phi_\tau w_{i,\lambda_0+\nu}^+
\psi_\varepsilon^2 \phi_{\tau} \,dx\\
&+ \int_{\Omega_{\lambda_0+\nu}\setminus K} [f_i(u_1,u_2,...,u_m)-f_i(u_{1,\lambda_0+\nu},u_{2,\lambda_0+\nu},...,u_{m,\lambda_0+\nu})] w_{i,\lambda_0+\nu}^+\psi_\varepsilon^2  \phi_{\tau}^2\, dx.\\
\end{split}
\end{equation}
Furthermore, since $f_i$ are $\mathcal{C}^1$ functions, we deduce that
\begin{equation}\label{E:jkdfh} \nonumber
\begin{split}
\int_{\Omega_{\lambda_0+\nu}\setminus K}|\nabla
w_{i,\lambda_0+\nu}^+|^2\psi_\varepsilon^2 \phi_{\tau}^2\,dx&\leq	2\int_{\Omega_{\lambda_0+\nu}\setminus K}|\nabla w_{i,\lambda_0+\nu}^+| |\nabla \psi_\varepsilon| w_{i,\lambda_0+\nu}^+ \psi_\varepsilon \phi_{\tau}^2 \,dx \\
&+ 2\int_{\Omega_{\lambda_0+\nu}\setminus K} |\nabla
w_{i,\lambda_0+\nu}^+| |\nabla \phi_\tau| w_{i,\lambda_0+\nu}^+	\psi_\varepsilon^2 \phi_{\tau}
\,dx\\
&+ \sum_{j=1}^m C_j(f_i) \int_{\Omega_{\lambda_0+\nu}\setminus K}  w_{j,\lambda_0+\nu}^+ w_{i,\lambda_0+\nu}^+ \psi_\varepsilon^2 \phi_{\tau}^2\,dx.
\end{split}
\end{equation}
	
Now, as in the proof of Lemma \ref{E:leaiuto2}, we use Young's
inequality to deduce that
\begin{equation} \nonumber
\begin{split}
\int_{\Omega_{\lambda_0+\nu}\setminus K}
\vert \nabla w_{i,\lambda_0+\nu}^+|^2\psi_\varepsilon^2
\phi_{\tau}^2 \,dx &\leq 8 \int_{\Omega_{\lambda_0+\nu}\setminus K}|\nabla \psi_\varepsilon|^2 (w_{i,\lambda_0+\nu}^+)^2 \phi_{\tau}^2 \,dx \\
&+ 8 \int_{\Omega_{\lambda_0+\nu}\setminus K}|\nabla \phi_\tau|^2
(w_{i,\lambda_0+\nu}^+)^2 \psi_\varepsilon^2\, dx\\
&+ \sum_{j=1}^m C_j \int_{\Omega_{\lambda_0+\nu}\setminus K}  w_{j,\lambda_0+\nu}^+ w_{i,\lambda_0+\nu}^+ \psi_\varepsilon^2 \phi_{\tau}^2\,dx,
\end{split}
\end{equation}
which in turns yields
\begin{equation} \nonumber
\begin{split}
\sum_{i=1}^m \int_{\Omega_{\lambda_0+\nu}\setminus K}
\vert \nabla w_{i,\lambda_0+\nu}^+|^2\psi_\varepsilon^2
\phi_{\tau}^2 \,dx &\leq 32 (\epsilon +\tau) \sum_{i=1}^m \|u_i\|^2_{L^\infty(\Omega_{\lambda_0+ \bar\nu})}\\
&+\frac{m}{2} \sum_{i=1}^m (1+C_i^2)
\int_{\Omega_{\lambda_0+\nu}\setminus K}  (w_{i,\lambda_0+\nu}^+)^2 \psi_\varepsilon^2 \phi_{\tau}^2\,dx.\\
\end{split}	
\end{equation}
Passing to the limit, as $(\epsilon, \tau) \to (0,0),$ in the latter we get
\begin{equation}\label{E:sdjfhsfskl}\nonumber
\begin{split}
&\sum_{i=1}^m\int_{\Omega_{\lambda_0+\nu}\setminus K}|\nabla
w_{i,\lambda_0+\nu}^+|^2\,dx \leq \frac{m}{2} \sum_{i=1}^m (1+C_i^2)
\int_{\Omega_{\lambda_0+\nu}\setminus K}  (w_{i,\lambda_0+\nu}^+)^2 \,dx.\\
&\leq \frac{m}{2} \sum_{i=1}^m (1+C_i^2)
C_{i,p}^2(\Omega_{\lambda_0+\nu}\setminus K)
\int_{\Omega_{\lambda_0+\nu}\setminus K}   |\nabla
w_{i,\lambda_0+\nu}^+|^2\,dx\, ,
\end{split}
\end{equation}
where $C_ {i,p}(\cdot)$ are the Poincar\'e constants (in the Poincar\'e inequalities in $H^1_0(\Omega_{\lambda_0+\nu}\setminus K)$).
Now we recall that $C_{i,p}^2(\Omega_{\lambda_0+\nu}\setminus K) \le Q(n) \vert \Omega_{\lambda_0+\nu}\setminus K \vert ^{\frac{2}{n}}$ for every $i=1,...,m$, where $Q=Q(n)$ is a positive constant depending only on the dimension $n$, and therefore, by summarizing, we have proved that for every compact set $K\subset \Omega_{\lambda_0}\setminus R_{\lambda_0}(\Gamma)$ there is a small $\bar\nu = \bar\nu(K, \lambda_0) \in (0, \frac{\vert \lambda_0 \vert}{4})$ such that for every $ 0 \le \nu < \bar\nu$ we have
\begin{equation}\label{E:sdjfhsfskl2} 
\begin{split}
\sum_{i=1}^m \int_{\Omega_{\lambda_0+\nu}\setminus K} & |\nabla
w_{i, \lambda_0+\nu}^+|^2\,dx \leq \frac{m}{2} \sum_{i=1}^m (1+C_i^2) Q(n) \vert \Omega_{\lambda_0+\nu}\setminus K \vert ^{\frac{2}{n}}
\int_{\Omega_{\lambda_0+\nu}\setminus K}   |\nabla
w_{\lambda_0+\nu}^+|^2\,dx.
\end{split}
\end{equation}
Now we first fix a compact $K\subset \Omega_{\lambda_0}\setminus R_{\lambda_0}(\Gamma)$ such that
$$\vert \Omega_{\lambda_0}\setminus K \vert^{\frac{2}{n}} < [m (1+C_i^2) Q(n)]^{-1} \; \text{for every $i=1,...,m$,}$$ this is possible since $ \vert  R_{\lambda_0}(\Gamma)\vert  =0$ by the assumption on $ \Gamma$, and then we take $ \bar\nu_0 < \bar\nu $ such that for every $ 0 \le \nu < \bar\nu_0$ we have
$\vert \Omega_{\lambda_0 + \nu} \setminus \Omega_{\lambda_0}
\vert^{\frac{2}{n}}< [ 4m (1+C_i^2) Q(n)]^{-1}$. Inserting those
informations into \eqref{E:sdjfhsfskl2} we immediately get that
\begin{equation}\label{E:sdjfhsfskl3} \nonumber
\begin{split}
&\int_{\Omega_{\lambda_0+\nu}\setminus K}|\nabla
w_{i,\lambda_0+\nu}^+|^2\,dx  < \frac{1}{2}
\int_{\Omega_{\lambda_0+\nu}\setminus K}   |\nabla
w_{i, \lambda_0+\nu}^+|^2\,dx
\end{split}
\quad \text{for every $i=1,...,m$}
\end{equation}
and so $\nabla w_{i,\lambda_0+\nu}^+=0$ on $ \Omega_{\lambda_0+\nu} \setminus K$ for every $ 0 \le \nu < \bar\nu_0$ and $i=1,...,m$. On the other hand, we recall that $\nabla w_{i,\lambda_0+\nu}^+=0$ on an open neighborhood of $K$ for every $0 \le \nu < \bar\nu$ and $i=1,...,m$, thus $\nabla w_{i,\lambda_0+\nu}^+=0$ on $ \Omega_{\lambda_0+\nu} $ for every $ 0 \le \nu < \bar\nu_0$ and $i=1,...,m$. The latter proves that $u_i\leq u_{i,\lambda_0+\nu}$ in
$\Omega_{\lambda_0+\nu}\setminus R_{\lambda_0+\nu}(\Gamma)$ for every
$0<\nu<\bar\nu_0$ and $i=1,...,m$. Such a contradiction shows that
\[
\lambda_0=0\,.
\]

\emph{Step 3: conclusion.} Since the moving plane procedure can be
performed in the same way but in the opposite direction, then this
proves the desired symmetry result.  The fact that the solution is
increasing in the $x_1$-direction in $\{x_1<0\}$ is implicit in the
moving plane procedure. Since $u$ has $C^1$ regularity, the fact that $\partial_{x_1}u_i$ is positive for $x_1<0$
follows by the maximum principle, the H\"opf
lemma and the assumption $(h_{f_i})$.
	
\end{proof}

\section{Proof of Theorem \ref{E:main2}}\label{E:mainproof2}

\begin{proof}[Proof of Theorem \ref{E:main2}]
We first note that, thanks to a well-known result of Brezis and Kato \cite{BK} and standard elliptic estimates (see also \cite{Stru}),	the solution $(u_1,...,u_m)$ to \eqref{E:criticalsingularequbdd} is smooth in $\R^n \setminus \Gamma$.	Furthermore we observe that it is enough to
prove the theorem for the special case in which	{\textit {the origin does not belong to}} $\Gamma$. Indeed, if the result is true in this special case, then we can apply it to the functions $u^{(i)}_z(x) := u_i(x+z)$ for every $i=1,...,m$, where $ z \in \{x_1=0\} \setminus	\Gamma \neq \emptyset$, which satisfies the system
\eqref{E:criticalsingularequbdd} with $\Gamma$ replaced by $-z + \Gamma$ (note
that $-z + \Gamma$ is a closed and proper subset of $\{x_1=0\}$ with
$\underset{\R^n}{\operatorname{Cap}_2}(-z +\Gamma)=0$ and such that
the origin does not belong to it).

\noindent Under this assumption, we consider the map $K: \R^n \setminus \{0\} \longrightarrow \R^n  \setminus \{0\} $ defined by $K(x):=\frac{x}{|x|^2}$. Given  $(u_1,...,u_m)$ solution to \eqref{E:criticalsingularequbdd}, the Kelvin transform of $u_i$ is given by
\begin{equation}\label{E:kelvcritical1} \hat{u}_i(x):= \frac{1}{|x|^{n-2}} u_i\left(\frac{x}{|x|^2}\right) \quad x \in \R^n \setminus \{\Gamma^* \cup \{0\}\},
\end{equation}
where $\Gamma^* = K( \Gamma)$ and $i=1,...,m$. It follows that $(\hat{u}_1,...,\hat{u}_m)$ weakly satisfies \eqref{E:criticalsingularequbdd} in $\R^n
\setminus \{\Gamma^* \cup \{0\}\}$ (i.e. in the sense that it satisfies \eqref{E:weakforubdd}) and that $\Gamma^*\subset
\{x_1=0\}$ since, by assumption, $\Gamma\subset \{x_1=0\}$.
Furthermore, we also have that $\Gamma^*$ is bounded (not
necessarily closed) since we assumed that $0\notin\Gamma$.

To proceed further we recall some useful lemma whose proofs are contained in \cite{EFS}.

\begin{lem}[\cite{EFS}]\label{E:diffeo} Let $F: \R^n \setminus \{0\} \longrightarrow \R^n \setminus \{0\}$ be a $C^1-$diffeomorphism and let $A$ be a bounded open set of $\R^n \setminus \{0\}$. If $C \subset A$ is a compact set such that
	\begin{equation}\label{E:closedsetnullcap} \nonumber
	\underset{A}{\operatorname{Cap}_2}(C)=0,
	\end{equation}
	then
	\begin{equation}\label{E:kelvclosedsetnullcap} \nonumber
	\underset{F(A)}{\operatorname{Cap}_2}(F(C))=0.
	\end{equation}
\end{lem}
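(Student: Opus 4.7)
The plan is to transport admissible test functions for $C$ in $A$ to admissible competitors for the capacity of $F(C)$ in $F(A)$ by composition with $F^{-1}$. Starting from $\underset{A}{\operatorname{Cap}_2}(C)=0$, pick $\varphi_n \in C_c^\infty(A)$ with $\varphi_n \geq 1$ on open neighborhoods $U_n$ of $C$ and $\int_A |\nabla \varphi_n|^2 \, dx \to 0$. Set $\tilde\varphi_n := \varphi_n \circ F^{-1}$: this is $C^1$ with compact support $F(\mathrm{supp}\,\varphi_n) \subset F(A)$, and satisfies $\tilde\varphi_n \geq 1$ on the open neighborhood $F(U_n)$ of $F(C)$. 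A standard mollification inside $F(A)$ then turns it into a genuine $C_c^\infty(F(A))$ competitor for $\underset{F(A)}{\operatorname{Cap}_2}(F(C))$.

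By the chain rule and the change of variables $y = F(x)$,
\[
\int_{F(A)} |\nabla \tilde\varphi_n(y)|^2 \, dy \leq \int_A \|DF^{-1}(F(x))\|^2 \, |\det DF(x)| \, |\nabla \varphi_n(x)|^2 \, dx,
\]
and the weight $W(x) := \|DF^{-1}(F(x))\|^2 |\det DF(x)|$ is continuous on $\R^n \setminus \{0\}$, hence bounded on every compact subset. If one could arrange $\mathrm{supp}(\varphi_n) \subset K$ for a fixed compact set $K \subset A \subset \R^n \setminus \{0\}$, the right-hand side would be dominated by $\|W\|_{L^\infty(K)} \int_A |\nabla \varphi_n|^2 \, dx$, which tends to zero, yielding the claim.

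The main obstacle is precisely this localization: a priori the supports of $\varphi_n$ may fill all of $A$, whose closure in $\R^n$ can accumulate at the origin, where $W$ blows up, so the weighted estimate is useless without confining the supports. To bypass this I would fix a compact neighborhood $K$ of $\overline{C}$ with $\overline{C}\subset \mathrm{int}(K)\subset K\subset A$ (possible since $C$ is compact in $A$), choose a cutoff $\eta \in C_c^\infty(\mathrm{int}(K))$ with $\eta \equiv 1$ on a neighborhood of $\overline{C}$, and replace $\varphi_n$ by $\eta \varphi_n$. Since $A$ is bounded, the Poincar\'e inequality on $H^1_0(A)$ gives $\int_A \varphi_n^2 \, dx \to 0$, and therefore
\[
\int_A |\nabla(\eta \varphi_n)|^2 \, dx \leq 2 \|\nabla \eta\|_\infty^2 \int_A \varphi_n^2 \, dx + 2 \int_A |\nabla \varphi_n|^2 \, dx \longrightarrow 0.
\]
The modified test functions are supported in the fixed compact set $K$ and still satisfy $\eta\varphi_n \geq 1$ on a neighborhood of $\overline{C}$, which plugs into the weighted estimate of the previous paragraph and concludes the proof.
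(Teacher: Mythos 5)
Your proof is correct. The paper does not actually prove this lemma --- it simply quotes it from \cite{EFS} --- and your argument is the natural one used there: transport admissible test functions by $\varphi\mapsto\varphi\circ F^{-1}$, control the transported Dirichlet energy by the change-of-variables weight $\|DF^{-1}(F(x))\|^2\,|\det DF(x)|$, which is bounded only on compact subsets of $\R^n\setminus\{0\}$, and then mollify. You correctly identify and repair the one genuine subtlety (the supports of the original competitors need not stay away from the origin, where the weight can blow up, e.g.\ for the Kelvin transform) via the fixed cutoff $\eta$ together with the Poincar\'e inequality on the bounded set $A$, so nothing is missing.
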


\begin{lem}[\cite{EFS}]\label{E:geom1} Let $\Gamma $ be a closed subset of $\R^n$, with $ n \ge 3$. Also suppose that $0 \not\in \Gamma$ and
	\begin{equation}\label{E:gamm}\nonumber
	\underset{\R^n}{\operatorname{Cap}_2}(\Gamma)=0.
	\end{equation}
	Then
	\begin{equation}\label{E:gamstar} \nonumber
	\underset{\R^n}{\operatorname{Cap}_2}(\Gamma^*)=0.
	\end{equation}
\end{lem}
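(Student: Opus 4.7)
The plan is to reduce the global statement to Lemma~\ref{E:diffeo} applied to the Kelvin inversion $K(x)=x/|x|^2$, which is a $C^1$-diffeomorphism (in fact an involution) of $\R^n\setminus\{0\}$ onto itself. Since the $2$-capacity is monotone and countably subadditive, $\underset{\R^n}{\operatorname{Cap}_2}(\Gamma^*)=0$ is equivalent to the vanishing of $\underset{\R^n}{\operatorname{Cap}_2}(C^*)$ for every compact subset $C^*\subset \Gamma^*$, so the problem reduces to the case of a compact target.

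Fix such a compact $C^*$. Because $\Gamma^*\subset \R^n\setminus\{0\}$, compactness of $C^*$ forces $r\le |x|\le R$ on $C^*$ for some $0<r<R$, and $C:=K(C^*)$ is then a compact subset of the annulus $\{1/R\le |x|\le 1/r\}\subset \R^n\setminus\{0\}$. By the involution property of $K$ and the inclusion $C^*\subset \Gamma^*$ we also have $C\subset \Gamma$, and hence, by monotonicity of $\operatorname{Cap}_2$ and the hypothesis $\underset{\R^n}{\operatorname{Cap}_2}(\Gamma)=0$, we obtain $\underset{\R^n}{\operatorname{Cap}_2}(C)=0$.

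To feed this into Lemma~\ref{E:diffeo} I would then invoke the standard equivalence (valid for $n\ge 3$, see e.g.\ \cite{evans}) that a compact subset of $\R^n$ has zero global $2$-capacity if and only if its $2$-capacity relative to some (equivalently, any) bounded open neighborhood of it is zero. Choosing for instance $A:=\{1/(2R)<|x|<2/r\}$, one has $C\subset A$ with $A\subset \R^n\setminus\{0\}$ bounded and open, and $\underset{A}{\operatorname{Cap}_2}(C)=0$. Lemma~\ref{E:diffeo} with $F=K$ then yields $\underset{K(A)}{\operatorname{Cap}_2}(C^*)=0$, and the same equivalence used in the reverse direction on the compact set $C^*\subset K(A)$ delivers $\underset{\R^n}{\operatorname{Cap}_2}(C^*)=0$. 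Passing to the supremum over compact $C^*\subset \Gamma^*$ concludes the proof.

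The only substantive point to verify carefully is the equivalence between the global and localized $2$-capacities of a compact set; this is where the hypothesis $n\ge 3$ is used, the corresponding statement failing for $n=2$ because of the logarithmic character of the Newtonian kernel. Apart from that, the argument is a direct bookkeeping combination of monotonicity of capacity with the diffeomorphism invariance already encoded in Lemma~\ref{E:diffeo}.
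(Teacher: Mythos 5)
Your argument is correct. Bear in mind that the paper itself does not prove Lemma \ref{E:geom1}: it is quoted from \cite{EFS} immediately after Lemma \ref{E:diffeo}, and the fact that these two lemmas are recalled together strongly indicates that the route you take --- Kelvin inversion as a $C^1$-involution of $\R^n\setminus\{0\}$ fed into Lemma \ref{E:diffeo}, bridged by the global/local capacity equivalence --- is precisely the intended one. Two small points are worth making explicit. First, the reduction to compact subsets is cleanest not via inner regularity but via $\sigma$-compactness: writing $\Gamma=\bigcup_k \bigl(\Gamma\cap\{1/k\le |x|\le k\}\bigr)$ exhibits $\Gamma^*=K(\Gamma)$ as a countable union of compacta, and countable subadditivity then gives the conclusion once each piece is handled; your closing phrase about ``passing to the supremum'' is slightly loose on this point. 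Second, the global--local equivalence you isolate is indeed the only analytic content and you locate the role of $n\ge 3$ correctly: the implication from local to global is immediate since $C^\infty_c(A)\subset C^\infty_c(\R^n)$, while the converse is obtained by multiplying an almost-optimal global test function $\varphi_j$ by a fixed cutoff $\eta\in C^\infty_c(A)$ equal to $1$ near $C$ and estimating $\int \varphi_j^2|\nabla\eta|^2\,dx$ through $\|\varphi_j\|_{L^{2^*}(\R^n)}\le C\|\nabla\varphi_j\|_{L^2(\R^n)}$, which is exactly where the dimensional restriction enters and where the two-dimensional logarithmic case breaks down.
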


\noindent  Let us now fix some notations. We set
\begin{equation}\label{E:eq:sn2hgjhsdgf1} \nonumber
\Sigma_\lambda=\{x\in\mathbb{R}^n\,:\,x_1 <\lambda\}\,.
\end{equation}
As above $x_\lambda=(2\lambda-x_1,x_2,\ldots,x_n)$ is the reflection
of $x$ through the hyperplane $T_\lambda=\{x=(x_1,...,x_n)\in \R^n \
| \ x_1=\lambda\}$. Finally we consider the Kelvin transform $(\hat{u}_1, ..., \hat{u}_m)$ of $(u_1,...,u_m)$ defined in \eqref{E:kelvcritical1} and we set
\begin{equation}\label{E:mov1} \nonumber
w_{i,\lambda}^+=(\hat{u}_i- \hat{u}_{i,\lambda})^+
\end{equation}
where $i=1,...,m$. Note that $(\hat{u}_1,...,\hat{u}_m)$ weakly solves
\begin{equation}\label{E:crittest}
\int_{\R^n} \nabla \hat{u}_i \nabla \vp_i \, dx = \sum_{j=1}^m a_{ij} \int_{\R^n} \hat{u}_j^{2^*-1} \vp_i \, dx \qquad \forall \varphi_i\in C^{1}_c(\mathbb{R}^n\setminus
\Gamma^* \cup \{0\})\,.
\end{equation}

and $(\hat{u}_{1,\lambda},...,\hat{u}_{m,\lambda})$ weakly solves
\begin{equation}\label{E:reflcriticaltest}
\int_{\R^n} \nabla \hat{u}_{i,\lambda} \nabla \vp_i \, dx = \sum_{j=1}^m a_{ij} \int_{\R^n} \hat{u}_{j,\lambda}^{2^*-1} \vp_i \, dx \qquad \forall \varphi_i \in C^{1}_c(\mathbb{R}^n\setminus
R_\lambda(\Gamma^* \cup \{0\}))\,.
\end{equation}

where $i=1,...,m$. The properties of the Kelvin transform, the fact that
$0\notin\Gamma$ and the regularity of $u_i$ imply that $\vert \hat{u}_i(x)
\vert \le C \vert x \vert^{2-N} $ for every $x \in \R^n$ and $i=1,...,m$ such that
$\vert x \vert \ge R$, where $C$ and $R$ are positive constants
(depending on $u_i$).  In particular, for every $ \lambda <0$, we have
\begin{equation}\label{E:kelvinpn} \nonumber
\hat{u}_i\in L^{2^*}(\Sigma_\lambda)\cap L^{\infty}(\Sigma_\lambda) \cap C^0 (\overline{\Sigma_\lambda})
\end{equation}
for every $i=1,...,m$. We will prove the result by showing that, actually, it holds $\hat{w}_{i,\lambda}^+ \equiv 0$ for every $i=1,...,m$. To prove this, we have to perform the moving plane method. 

\begin{lem}\label{E:stimgradcrit} Under the assumption of Theorem
\ref{E:main2}, for every $\lambda < 0$, we have that $\hat{w}_{i,\lambda}^+ \in
L^{2^*}(\Sigma_\lambda), \nabla \hat{w}_{i,\lambda}^+  \in L^2(\Sigma_\lambda)$ and
\begin{equation}\label{E:buonastimacrit}
\sum_{i=1}^m \|w_{i,\lambda}^+\|^2_{L^{2^*}(\Sigma_\lambda)} \leq \sum_{i=1}^m C_{i,S}^2\int_{\Sigma_\lambda}|\nabla \hat{w}_{i,\lambda}^+|^2\,dx \leq	 2\frac{n+2}{n-2}  \sum_{i,j=1}^m a_{ij} C_{i,S}^2 \|\hat{u}_j\|^{2^*-1}_{L^{2^*}(\Sigma_\lambda)} \|\hat{u}_i\|_{L^{2^*}(\Sigma_\lambda)}.
\end{equation}
where $C_{i,S}$ are the best constants in Sobolev embeddings.
\end{lem}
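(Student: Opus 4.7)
The plan is to adapt the double cut-off energy estimate from Lemma \ref{E:leaiuto2} to the unbounded setting $\Sigma_\lambda$, closing the estimate with the critical Sobolev embedding and a H\"older inequality on the nonlinear right-hand side. I introduce two cut-offs: $\psi_\varepsilon \in C^{0,1}(\R^n,[0,1])$ vanishing on a neighborhood of $\Gamma^* \cup \{0\} \cup R_\lambda(\Gamma^* \cup \{0\})$ (a zero $2$-capacity set, by the hypothesis on $\Gamma$, by Lemma \ref{E:geom1}, and by invariance under $R_\lambda$) with $\int |\nabla \psi_\varepsilon|^2\,dx < \varepsilon$, built via $T_1 \circ \varphi_\varepsilon$ as in \eqref{E:test1}; and a standard radial cut-off $\phi_R \in C^\infty_c(B_{2R})$ with $\phi_R \equiv 1$ on $B_R$ and $|\nabla \phi_R| \le 2/R$ to handle the unboundedness of $\Sigma_\lambda$.

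Next I plug $\varphi_i := \hat{w}_{i,\lambda}^+\,\psi_\varepsilon^2\,\phi_R^2$, which lies in $H^1_0(\Sigma_\lambda)$ for exactly the reasons detailed in Lemma \ref{E:leaiuto}, as test function in both \eqref{E:crittest} for $\hat{u}_i$ and \eqref{E:reflcriticaltest} for $\hat{u}_{i,\lambda}$. Subtracting and using $\nabla(\hat{u}_i - \hat{u}_{i,\lambda}) \cdot \nabla \hat{w}_{i,\lambda}^+ = |\nabla \hat{w}_{i,\lambda}^+|^2$ a.e., I obtain an identity of the form
$$
\int_{\Sigma_\lambda} |\nabla \hat{w}_{i,\lambda}^+|^2 \psi_\varepsilon^2 \phi_R^2\,dx + (\text{cross terms in } \nabla\psi_\varepsilon,\nabla\phi_R) = \sum_{j=1}^m a_{ij} \int_{\Sigma_\lambda} \bigl(\hat{u}_j^{2^*-1} - \hat{u}_{j,\lambda}^{2^*-1}\bigr) \hat{w}_{i,\lambda}^+ \psi_\varepsilon^2 \phi_R^2\,dx.
$$
Two applications of Young's inequality absorb each cross term into $\tfrac{1}{4}\int |\nabla \hat{w}_{i,\lambda}^+|^2 \psi_\varepsilon^2 \phi_R^2\,dx$ plus an error $\int (\hat{w}_{i,\lambda}^+)^2(|\nabla\psi_\varepsilon|^2\phi_R^2 + \psi_\varepsilon^2|\nabla\phi_R|^2)\,dx$, leaving a factor $2$ on the right-hand side. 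For the nonlinear part, on $\{\hat{u}_j \ge \hat{u}_{j,\lambda}\}$ the mean value theorem yields $\hat{u}_j^{2^*-1} - \hat{u}_{j,\lambda}^{2^*-1} \le (2^*-1)\,\hat{u}_j^{2^*-2}\,\hat{w}_{j,\lambda}^+$, while on the complement the integrand is nonpositive (as $a_{ij} \ge 0$); combining with the crude bounds $\hat{w}_{j,\lambda}^+ \le \hat{u}_j$ and $\hat{w}_{i,\lambda}^+ \le \hat{u}_i$ produces the pointwise estimate $a_{ij}(\hat{u}_j^{2^*-1} - \hat{u}_{j,\lambda}^{2^*-1})\hat{w}_{i,\lambda}^+ \le (2^*-1)\,a_{ij}\,\hat{u}_j^{2^*-1}\hat{u}_i$, and H\"older with exponents $\tfrac{2^*}{2^*-1}$ and $2^*$ on $\Sigma_\lambda$ then produces $\|\hat{u}_j\|_{L^{2^*}(\Sigma_\lambda)}^{2^*-1}\|\hat{u}_i\|_{L^{2^*}(\Sigma_\lambda)}$ and the coefficient $2(2^*-1) = 2\tfrac{n+2}{n-2}$.

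To conclude I pass to the limit $\varepsilon \to 0$ and $R \to +\infty$ via Fatou on the left and dominated convergence on the right. The two error terms vanish: the one carrying $|\nabla \psi_\varepsilon|^2$ by $\int |\nabla \psi_\varepsilon|^2 < \varepsilon$ together with local boundedness of $\hat{u}_i$ away from its singular set; the one carrying $|\nabla \phi_R|^2$ by the Kelvin-transform decay $\hat{u}_i(x) = O(|x|^{2-n})$ at infinity (inherited from the smoothness of $u_i$ near $0$ and the preliminary reduction $0 \notin \Gamma$ performed at the start of the proof of Theorem \ref{E:main2}), which gives an $O(R^{2-n})$ contribution as $n \ge 3$. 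This simultaneously proves that $\hat{w}_{i,\lambda}^+ \in H^1_0(\Sigma_\lambda)$, following the argument in the final paragraph of the proof of Lemma \ref{E:leaiuto2}, and the critical Sobolev embedding $H^1_0(\Sigma_\lambda) \hookrightarrow L^{2^*}(\Sigma_\lambda)$ with best constant $C_{i,S}$ then gives the outer inequality in \eqref{E:buonastimacrit}; summing over $i$ yields the full chain.

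The main obstacle is the control of the term at infinity: the Kelvin-transform decay $\hat{u}_i(x) = O(|x|^{2-n})$ is essential for $\int (\hat{w}_{i,\lambda}^+)^2 |\nabla \phi_R|^2\,dx \to 0$, and this is where the preliminary reduction $0 \notin \Gamma$ plays a decisive role; without it, $\Gamma^*$ could accumulate at the origin and the Kelvin transform would lose the needed pointwise bound at infinity. Everything else is, modulo a systematic indexing over $j$, a direct analogue of the scalar arguments in \cite{EFS}, leveraging cooperativity ($a_{ij} \ge 0$) precisely at the step where we discard the contribution coming from $\{\hat{u}_j < \hat{u}_{j,\lambda}\}$.
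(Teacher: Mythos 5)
Your proposal is correct and follows essentially the same route as the paper's proof: the same test function $w_{i,\lambda}^+\psi_\varepsilon^2\eta_R^2$, the same Young absorption of the cross terms, the same convexity/mean-value bound $(2^*-1)\hat u_j^{2^*-2}$ on the nonlinearity combined with $w^+_{j,\lambda}\le \hat u_j$, H\"older, and the Sobolev embedding at the end. The only cosmetic deviation is that you dispose of the annulus term $\int(w^+_{i,\lambda})^2|\nabla\eta_R|^2$ via the pointwise Kelvin decay $\hat u_i=O(|x|^{2-n})$, whereas the paper uses H\"older and the vanishing $L^{2^*}$-tail of $\hat u_i$ on $B_{2R}\setminus B_R$; both rest on the same reduction $0\notin\Gamma$ and are equally valid.
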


	\begin{proof}
	We immediately see that $w_{i,\lambda}^+\in L^{2^*}(\Sigma_\lambda),$
	since $0\leq w_{i,\lambda}^+\leq \hat{u}_i \in L^{2^*}(\Sigma_\lambda)$ for every $i=1,...,m$. The rest of the proof follows the lines of the one of Lemma
	\ref{E:leaiuto2}. Arguing as in section 2, for every $ \varepsilon>0$,
	we can find a function $\psi_\varepsilon \in C^{0,1}(\R^N, [0,1])$
	such that
	$$\int_{\Sigma_\lambda} |\nabla \psi_\varepsilon|^2 < 4 \varepsilon$$
	and $\psi_\varepsilon = 0$ in an open neighborhood
	$\mathcal{B_{\varepsilon}}$ of $R_{\lambda}(\{\Gamma^* \cup
	\{0\}\})$, with $\mathcal{B_{\varepsilon}} \subset
	\Sigma_{\lambda}$.
	
	Fix $ R_0>0$ such that $R_{\lambda}(\{\Gamma^* \cup \{0\} )\subset
	B_{R_0} $ and, for every $ R > R_0$, let $\eta_R$ be a standard
	cut off function such that $ 0 \le \eta_R \le 1 $ on $ \R^n$,
	$\eta_R=1$ in $B_R$, $\eta_R=0$ outside $B_{2R}$ with
	$|\nabla\eta_R|\leq 2/R,$ and consider
	$$\varphi_i\,:= \begin{cases}
	\, w_{i,\lambda}^+\psi_\varepsilon^2\eta_R^2 \, & \text{in}\quad\Sigma_\lambda, \\
	0 &  \text{in}\quad \R^n \setminus  \Sigma_\lambda
	\end{cases}$$
	for every $i=1,...,m$. Now, as in Lemma \ref{E:leaiuto} we see that $ \varphi_i \in C^{0,1}_c(\R^n)$ with $supp(\varphi_i)$ contained in $\overline {\Sigma_{\lambda} \cap B_{2R}} \setminus R_{\lambda}(\{\Gamma^* \cup \{0\}\})$ and
	\begin{equation}\label{E:gradvarphii-intero}
	\nabla \varphi_i = \psi_\varepsilon^2 \eta_R^2 \nabla w_{i,\lambda}^+ +
	2 w_{i,\lambda}^+  (\psi_\varepsilon^2 \eta_R \nabla \eta_R + \psi_\varepsilon \eta_R^2  \nabla  \psi_\varepsilon).
	\end{equation}
	Therefore, by a standard density argument, we can use $\varphi_i$ as test functions respectively in \eqref{E:crittest} and in \eqref{E:reflcriticaltest} so that, subtracting we get
	\begin{equation}\label{E:subt1}
	\begin{split}
	\int_{\Sigma_\lambda}|\nabla w_{i,\lambda}^+|^2 \psi_\varepsilon^2\eta_R^2\,dx&=
	-2\int_{\Sigma_\lambda}\nabla w_{i,\lambda}^+ \nabla \psi_\varepsilon
	 w_{i,\lambda}^+ \psi_\varepsilon\eta_R^2\,dx
	-2\int_{\Sigma_\lambda} \nabla w_{i,\lambda}^+ \nabla \eta_R w_{i,\lambda}^+ \eta_R \psi_\varepsilon^2\,dx\\
	&+\sum_{i=1}^m a_{ij}\int_{\Sigma_\lambda} (\hat{u}_j^{2^*-1}-\hat{u}_{j,\lambda}^{2^*-1}) w_{i,\lambda}^+ \psi_\varepsilon^2\eta_R^2\,dx\,\,\\
	&=:\,I_1+I_2+I_3\,.
	\end{split}
	\end{equation}

	Exploiting also Young's inequality and recalling that
	$0\leq  w_{i,\lambda}^+\leq \hat{u}_i$, we get that
	\begin{equation}\label{E:Ii1}
	\begin{split}
	|I_1|&\leq \frac{1}{4} \int_{\Sigma_\lambda}|\nabla w_{i,\lambda}^+|^2 \psi_\varepsilon^2\eta_R^2\,dx
	+4\int_{\Sigma_\lambda}| \nabla \psi_\varepsilon|^2(w_{i,\lambda}^+)^2\eta_R^2\,dx\\
	&\leq  \frac{1}{4} \int_{\Sigma_\lambda}|\nabla  w_{i,\lambda}^+|^2 \psi_\varepsilon^2 \eta_R^2\,dx + 16 \varepsilon \|\hat{u}_i\|^2_{L^\infty(\Sigma_\lambda)} .\\
	\end{split}
	\end{equation}

	Furthermore we have that
	\begin{equation}\label{E:Ii2}
	\begin{split}
	|I_2|&\leq \frac{1}{4} \int_{\Sigma_\lambda}|\nabla w_{i,\lambda}^+|^2 \psi_\varepsilon^2\eta_R^2\,dx
	+4\int_{\Sigma_\lambda\cap(B_{2R}\setminus B_{R})}|\nabla \eta_R|^2 (w_{i,\lambda}^+)^2 \psi_\varepsilon^2\,dx\\
	&\leq  \frac{1}{4} \int_{\Sigma_\lambda}|\nabla w_{i,\lambda}^+|^2 \psi_\varepsilon^2 \eta_R^2\,dx \\
	&+ 4\left(\int_{\Sigma_\lambda\cap(B_{2R}\setminus B_{R})}|\nabla
	\eta_R|^n\,dx\right)^{\frac{2}{n}}
	\left(\int_{\Sigma_\lambda\cap(B_{2R}\setminus B_{R})}\hat{u}_i^{2^*}\,dx\right)^{\frac{n-2}{n}}\\
	&\leq  \frac{1}{4} \int_{\Sigma_\lambda}|\nabla w_{i,\lambda}^+|^2 \psi_\varepsilon^2\eta_R^2\,dx\,+\,
	c(n) \left(\int_{\Sigma_\lambda\cap(B_{2R}\setminus B_{R})} \hat{u}_i^{2^*}\,dx\right)^{\frac{n-2}{n}}
	\end{split}
	\end{equation}
	where $c(n)$ is a positive constant depending only on the dimension $n$. Let us now estimate $I_3$. Since $\hat{u}_i(x), \hat{u}_{i, \lambda}(x)>0$, by the
	convexity of $ t \to t^{2^*-1},$ for $ t >0$, we obtain
	$$\hat{u}_i^{2^*-1}(x)-\hat{u}_{i,\lambda}^{2^*-1}(x) \le  \frac{n+2}{n-2}
	\hat{u}_{i,\lambda}^{2^*-2}(x) (\hat{u}_i(x) - \hat{u}_{i,\lambda}(x)),$$ for every $x \in
	\Sigma_{\lambda}$ and $i=1,...,m$. Thus, by making use of the monotonicity of $ t
	\to t^{2^*-2}$, for $ t >0$ and the definition of $ w_{i,\lambda}^+$ we
	get  
	$$(\hat{u}_i^{2^*-1}-\hat{u}_{i,\lambda}^{2^*-1}) w_{i,\lambda}^+ \le  \frac{n+2}{n-2} \hat{u}_{i,\lambda}^{2^*-2}(\hat{u}_i-\hat{u}_{i,\lambda})  w_{i,\lambda}^+ \le \frac{n+2}{n-2}	\hat{u}_i^{2^*-2}(w_{i,\lambda}^+)^2$$
	for every $i=1,...,m$. Therefore
	\begin{equation}\label{E:Ii3}
	\begin{split}
	|I_3|& \leq
	\frac{n+2}{n-2} \sum_{j=1}^m a_{ij}\int_{\Sigma_\lambda}
	\hat{u}_j^{2^*-2}w_{j,\lambda}^+w_{i,\lambda}^+ \psi_\varepsilon^2\eta_R^2\,dx\,\\
	&\leq \frac{n+2}{n-2}  \sum_{j=1}^m a_{ij} \int_{\Sigma_\lambda} \hat{u}_j^{2^*-2}\hat{u}_j \hat{u}_i dx\,=
	\frac{n+2}{n-2}  \sum_{j=1}^m a_{ij} \int_{\Sigma_\lambda} \hat{u}_j^{2^*-1}u_i\,dx\\
	& = \frac{n+2}{n-2} \left( a_{ii}\|\hat{u}_i\|^{2^*}_{L^{2^*}(\Sigma_\lambda)} + \sum_{\substack{ j=1 \\ j \neq i}}^m a_{ij} \int_{\Sigma_\lambda} \hat{u}_j^{2^*-1}u_i\,dx   \right)\\
	& \leq \frac{n+2}{n-2} \left( a_{ii}\|\hat{u}_i\|^{2^*}_{L^{2^*}(\Sigma_\lambda)} + \sum_{\substack{ j=1 \\ j \neq i}}^m a_{ij} \left(\int_{\Sigma_\lambda} \hat{u}_j^{2^*}\,dx\right)^{\frac{n+2}{2n}} \left( \int_{\Sigma_\lambda} u_i^{2^*}\,dx \right)^{\frac{1}{2^*}}    \right)\\
	&= \frac{n+2}{n-2}  \sum_{j=1}^m a_{ij} \|\hat{u}_j\|^{2^*-1}_{L^{2^*}(\Sigma_\lambda)} \|\hat{u}_i\|_{L^{2^*}(\Sigma_\lambda)} 
	\end{split}
	\end{equation}
	where we also used that $0\leq  w_{i,\lambda}^+ \leq \hat{u}_i$ for every $i=1,...,m$ and H\"older inequality.

	Taking into account the estimates on $I_1$, $I_2$ and $I_3$, by \eqref{E:subt1} we deduce that
	
	\begin{equation}\label{E:subt2} \nonumber
	\begin{split}
	\int_{\Sigma_\lambda}|\nabla w_{i,\lambda}^+|^2 \psi_\varepsilon^2\eta_R^2\,dx& 
	\leq  32 \varepsilon \|\hat{u}_i\|^2_{L^\infty(\Sigma_\lambda)} +  2c(n) \left(\int_{\Sigma_\lambda\cap(B_{2R}\setminus B_{R})} \hat{u}_i^{2^*}\,dx\right)^{\frac{n-2}{n}}\\
	&+ 2\frac{n+2}{n-2}  \sum_{j=1}^m a_{ij} \|\hat{u}_j\|^{2^*-1}_{L^{2^*}(\Sigma_\lambda)} \|\hat{u}_i\|_{L^{2^*}(\Sigma_\lambda)}\\
	\end{split}
	\end{equation}
	which in turns yields
	\begin{equation}\label{E:subt3}
	\begin{split}
	\sum_{i=1}^m \int_{\Sigma_\lambda}|\nabla w_{i,\lambda}^+|^2 \psi_\varepsilon^2\eta_R^2\,dx& 
	\leq  32 \varepsilon \sum_{i=1}^m \|\hat{u}_i\|^2_{L^\infty(\Sigma_\lambda)} +  2c(n) \sum_{i=1}^m \| \hat{u}_i \|^2_{\Sigma_\lambda\cap(B_{2R}\setminus B_{R})}\\
	&+ 2\frac{n+2}{n-2}  \sum_{i=1}^m\sum_{j=1}^m a_{ij} \|\hat{u}_j\|^{2^*-1}_{L^{2^*}(\Sigma_\lambda)} \|\hat{u}_i\|_{L^{2^*}(\Sigma_\lambda)}.\\
	\end{split}
	\end{equation}

	By Fatou Lemma, as $\varepsilon$ tends to zero and $R$ tends to
	infinity,  we deduce that $\nabla w_{i,\lambda}^+ \in L^2(\Sigma_{\lambda})$ for every $i=1,...,m$.  We also note that $ \varphi_i \to
	w_{i,\lambda}^+$ in $L^{2^*}(\Sigma_\lambda)$, by definition of
	$\varphi_i$, and that $\nabla \varphi_i \to \nabla w_{i,\lambda}$ in $L^2(\Sigma_{\lambda})$, by
	\eqref{E:gradvarphii-intero} and the fact that $w_{i,\lambda}^+\in
	L^{2^*}(\Sigma_\lambda)$ for every $i=1,...,m$. Therefore by \eqref{E:subt3} we have
	\begin{equation}\label{E:final}
	\sum_{i=1}^m \int_{\Sigma_\lambda}|\nabla w_{i,\lambda}^+|^2 \,dx \leq 2\frac{n+2}{n-2}  \sum_{i,j=1}^m a_{ij} \|\hat{u}_j\|^{2^*-1}_{L^{2^*}(\Sigma_\lambda)} \|\hat{u}_i\|_{L^{2^*}(\Sigma_\lambda)}.
	\end{equation}
	Now we apply the Sobolev embedding theorem to \eqref{E:final}, to deduce 
	\eqref{E:buonastimacrit}.
	
\end{proof}

We can now complete the proof of Theorem \ref{E:main2}. As for the proof of Theorem \ref{E:semilinearSym},  we split the proof into three steps and we start with

\noindent \emph{Step 1: there exists $M>1$ such that $\hat{u}_i \leq \hat{u}_{i,\lambda}$ in $\Sigma_\lambda\setminus R_\lambda(\Gamma^* \cup \{0\})$, for all $\lambda< -M$ and $i=1,...,m$.}

Arguing as in the proof of Lemma \ref{E:stimgradcrit} and using the same notations and the same construction for $\psi_\varepsilon$, $\eta_R$ and $\varphi_i$, we get
\begin{equation}\label{E:start}  \nonumber
\begin{split}
\int_{\Sigma_\lambda}|\nabla w_{i,\lambda}^+|^2 \psi_\varepsilon^2\eta_R^2\,dx&=
-2\int_{\Sigma_\lambda}\nabla w_{i,\lambda}^+ \nabla \psi_\varepsilon
w_{i,\lambda}^+ \psi_\varepsilon\eta_R^2\,dx
-2\int_{\Sigma_\lambda} \nabla w_{i,\lambda}^+ \nabla \eta_R w_{i,\lambda}^+ \eta_R \psi_\varepsilon^2\,dx\\
&+\sum_{i=1}^m a_{ij}\int_{\Sigma_\lambda} (\hat{u}_j^{2^*-1}-\hat{u}_{j,\lambda}^{2^*-1}) w_{i,\lambda}^+ \psi_\varepsilon^2\eta_R^2\,dx\,\,\\
&=:\,I_1+I_2+I_3\,.
\end{split}
\end{equation}
where $I_1,I_2$ and $I_3$ can be estimated exactly as in \eqref{E:Ii1},
\eqref{E:Ii2} and \eqref{E:Ii3}. The latter yield
\begin{equation} \nonumber
\begin{split}
\sum_{i=1}^m \int_{\Sigma_\lambda}|\nabla w_{i,\lambda}^+|^2 \psi_\varepsilon^2\eta_R^2\,dx& 
\leq  32 \varepsilon \sum_{i=1}^m \|\hat{u}_i\|^2_{L^\infty(\Sigma_\lambda)} +  2c(n) \sum_{i=1}^m \| \hat{u}_i \|^2_{\Sigma_\lambda\cap(B_{2R}\setminus B_{R})}\\
&+ 2\frac{n+2}{n-2} \sum_{i,j=1}^m a_{ij}\int_{\Sigma_\lambda}
\hat{u}_j^{2^*-2}w_{j,\lambda}^+w_{i,\lambda}^+ \psi_\varepsilon^2\eta_R^2\,dx.
\end{split}
\end{equation}
Taking the limit in the latter, as $\varepsilon$ tends to zero and $R$ tends to infinity, leads to
\begin{equation} \nonumber
\begin{split}
\sum_{i=1}^m \int_{\Sigma_\lambda}|\nabla w_{i,\lambda}^+|^2 \,dx&\leq 2\frac{n+2}{n-2} \sum_{i,j=1}^m a_{ij}\int_{\Sigma_\lambda}
\hat{u}_j^{2^*-2}w_{j,\lambda}^+w_{i,\lambda}^+\,dx  < +\infty
\end{split}
\end{equation}
which combined with Lemma \ref{E:stimgradcrit} gives
\begin{equation}\label{E:aieiei}
\begin{split}
&\sum_{i=1}^m \int_{\Sigma_\lambda}|\nabla w_{i,\lambda}^+|^2 \leq 2\frac{n+2}{n-2} \sum_{i,j=1}^m a_{ij}\int_{\Sigma_\lambda}
\hat{u}_j^{2^*-2}w_{j,\lambda}^+w_{i,\lambda}^+\,dx\,\\
&\leq \frac{n+2}{n-2} \sum_{i,j=1}^m a_{ij} \left(\int_{\Sigma_\lambda}
\hat{u}_j^{2^*-2}(w_{j,\lambda}^+)^2\,dx\, + \int_{\Sigma_\lambda}
\hat{u}_j^{2^*-2}(w_{i,\lambda}^+)^2\,dx\, \right)\\
&\leq  \frac{n+2}{n-2} \sum_{i,j=1}^m a_{ij}\left[\left(\int_{\Sigma_\lambda} \hat{u}_j^{2^*}
\,dx\right)^{\frac{2}{n}} \left(\int_{\Sigma_\lambda}
(w_{j,\lambda}^+)^{2^*}\,dx\right)^{\frac{2}{2^*}} + \left(\int_{\Sigma_\lambda} \hat{u}_j^{2^*}
\,dx\right)^{\frac{2}{n}} \left(\int_{\Sigma_\lambda}
(w_{i,\lambda}^+)^{2^*}\,dx\right)^{\frac{2}{2^*}} \right]\\
&\leq  \frac{n+2}{n-2} \sum_{i=1}^m  \sum_{j=1}^m a_{ij} \| \hat{u}_j\|_{L^{2^*}(\Sigma_\lambda)}^{2^*-2} \left( C_{j,S}^2 \int_{\Sigma_\lambda} |\nabla w_{j,\lambda}^+|^2\,dx + C_{i,S}^2 \int_{\Sigma_\lambda} |\nabla w_{i,\lambda}^+|^2\,dx\right)\\
&= \sum_{i=1}^m \left[\frac{n+2}{n-2} \sum_{j=1}^m a_{ij} \left(2\delta_{ij} C_{i,S}^2 \|\hat{u}_i\|_{L^{2^*}(\Sigma_\lambda)}^{2^*-2} + (1-\delta_{ij}) C_{j,S}^2\| \hat{u}_j \|_{L^{2^*}(\Sigma_\lambda)}^{2^*-2}\right)\right] \int_{\Sigma_\lambda} |\nabla w_{i,\lambda}^+|^2\,dx,
\end{split}
\end{equation}
where 
$$\delta_{ij}:=\begin{cases}
1 & \quad \text{if} \; i = j \\
0 & \quad \text{if} \; i \neq j.
\end{cases}$$
Recalling that $\hat{u}_i, \hat{u}_j\in L^{2^*}(\Sigma_\lambda)$ for every $i,j=1,...,m$, we deduce the existence of $ M>1$ such that
\[
\frac{n+2}{n-2} \sum_{j=1}^m a_{ij} \left(2\delta_{ij} C_{i,S}^2 \|\hat{u}_i\|_{L^{2^*}(\Sigma_\lambda)}^{2^*-2} + (1-\delta_{ij}) C_{j,S}^2\| \hat{u}_j \|_{L^{2^*}(\Sigma_\lambda)}^{2^*-2}\right)<1\,
\]
for every $\lambda < -M$ and $i=1,....,m$. The latter and \eqref{E:kushfkusfksf} lead
to
\[
\int_{\Sigma_\lambda}|\nabla w_{i,\lambda}^+|^2\,dx=0\,.
\]
This implies that for every $i=1,...,m$ we have $w_{i,\lambda}^+=0$ by Lemma \ref{E:stimgradcrit} and the
claim is proved.\\

\noindent To proceed further we
define
\begin{equation}\nonumber
\Lambda_0:=\{\lambda<0 : \hat{u}_i \leq
\hat{u}_{i,t}\,\,\,\text{in}\,\,\,\Sigma_t\setminus R_t(\Gamma^* \cup
\{0\})\,\,\,\text{for all $t\in(a,\lambda]$ and for every i=1,...,m.}\}
\end{equation}
and
\begin{equation}\nonumber
\lambda_0:=\sup\,\Lambda_0.
\end{equation}

\noindent \emph{Step 2: we have that $\lambda_0=0$.} We argue by
contradiction and suppose that $\lambda_0<0$. By continuity we know
that $\hat{u}_i\leq \hat{u}_{i,\lambda_0}$ in $\Sigma_{\lambda_0}\setminus
R_{\lambda_0}(\Gamma^* \cup \{0\})$ for every $i=1,...,m$. By the strong maximum principle we deduce that $\hat{u}_i < \hat{u}_{i,\lambda_0}$ in $\Sigma_{\lambda_0}\setminus R_{\lambda_0}(\Gamma^* \cup \{0\})$ for every $i=1,...,m$. Indeed, $\hat{u}_i= \hat{u}_{i,\lambda_0}$ in $\Sigma_{\lambda_0}\setminus R_{\lambda_0}(\Gamma^* \cup \{0\})$) is not possible if $\lambda_0<0$, since in this case each $\hat{u}_i$ would be singular somewhere on $R_{\lambda_0}(\Gamma^* \cup \{0\})$. Now, for some $\bar\tau>0$, that will be fixed later on, and for any
$0<\tau<\bar\tau$ we show that $\hat{u}_i \leq \hat{u}_{i,\lambda_0+\tau}$ in
$\Sigma_{\lambda_0+\tau}\setminus R_{\lambda_0+\tau}(\Gamma^* \cup
\{0\})$ obtaining a contradiction with the definition of $\lambda_0$
and thus proving the claim. To this end we are going to show that,
for every $ \delta>0$ there are $ \bar{\tau}(\delta, \lambda_0)>0 $
and a compact set $K$ (depending on $\delta$ and $\lambda_0$) such
that
\[
K \subset \Sigma_{\lambda}\setminus R_{\lambda}(\Gamma^* \cup \{0\}), \qquad
\int_{\Sigma_{\lambda}\setminus K}\, \hat{u}_i^{2^*} < \delta, \qquad \forall \,  \lambda \in [\lambda_0, \lambda_0 + \bar{\tau}] \text{ and $i=1,...,m$}.
\]
To see this, we note that for every  every $ \delta>0$ there are $\tau_1(\delta, \lambda_0)>0 $ and a compact set $K$ (depending on
$\delta $ and $ \lambda_0$) such that $\displaystyle
\int_{\Sigma_{\lambda_0}\setminus K}\,\hat{u}_i^{2^*} < \frac{\delta}{2}$ for every $i=1,...,m$ and $K \subset \Sigma_{\lambda}\setminus R_{\lambda}(\Gamma^* \cup
\{0\})$ for every $ \lambda \in [\lambda_0, \lambda_0 + \tau_1].$
Consequently $\hat{u}_i$ and $\hat{u}_{i,\lambda}$ are well defined on $K$ for every
$ \lambda \in [\lambda_0, \lambda_0 + \tau_1].$ Hence, by the
uniform continuity of the functions $ g_i(x,\lambda) := \hat{u}_i(x) -
\hat{u}_i(2\lambda-x_1,x^{'}) $ on the compact set $K \times [\lambda_0,
\lambda_0 + \tau_1]$ we can ensure that  $K \subset
\Sigma_{{\lambda_0+\tau}} \setminus R_{{\lambda_0+\tau}}(\Gamma^*
\cup \{0\})$ and $\hat{u}_i< \hat{u}_{i,\lambda_0+\tau}$ in $K$ for any $0 \le \tau<
\tau_2$, for some $\tau_2= \tau(\delta,\lambda_0) \in (0, \tau_1)$.
Clearly we can also assume that $\tau_2< \frac{\vert \lambda_0
\vert}{4}.$ Finally, since $\hat{u}_i^{2^*} \in L^1(\Sigma_{\lambda_0 +
\frac{\vert \lambda_0 \vert}{4}})$ and $\displaystyle
\int_{\Sigma_{\lambda_0}\setminus K}\,\hat{u}_i^{2^*} < \frac{\delta}{2}$ for each $i=1,...,m$, we obtain the existence of $\bar{\tau} \in (0, \tau_2)$ such that
$\displaystyle \int_{\Sigma_{\lambda}\setminus K} \, \hat{u}_i^{2^*} <
\delta$ for all $ \lambda \in [\lambda_0, \lambda_0 + \bar{\tau}]$ and $i=1,...,m$.

Now we repeat verbatim the arguments used in the proof of Lemma \ref{E:stimgradcrit} but using the test functions
$$\varphi_i \,:= \begin{cases}
\, w_{i,\lambda_0+\tau}^+\psi_\varepsilon^2\eta_R^2 \, & \text{in}\quad\Sigma_{\lambda_0+\tau}  \\
0 &  \text{in}\quad \R^n \setminus \Sigma_{\lambda_0+\tau}.
\end{cases}$$

Thus we recover the last inequality in \eqref{E:aieiei}, which
immediately gives, for any $0 \le \tau<\bar\tau$
\begin{equation}\label{E:finalstepcritrn}
\begin{split}
&\sum_{i=1}^m \int_{\Sigma_{\lambda_0+\tau}\setminus K}|\nabla w_{i,\lambda_0+\nu}^+|^2\\ 
&\leq  \frac{n+2}{n-2} \sum_{i,j=1}^m a_{ij} \left[2\delta_{ij} C_{i,S}^2 \|\hat{u}_i\|_{L^{2^*}(\Sigma_{\lambda_0+\tau}\setminus K)}^{2^*-2}  + (1-\delta_{ij}) C_{j,S}^2\| \hat{u}_j \|_{L^{2^*}(\Sigma_{\lambda_0+\tau}\setminus K)}^{2^*-2}\right] \int_{\Sigma_{\lambda_0+\tau}\setminus K} |\nabla w_{i,\lambda}^+|^2\,dx \\
\end{split}
\end{equation}
since $ w_{i,\lambda_0+\tau}^+$ and $\nabla w_{i,\lambda_0+\tau}^+$ are zero in a neighborhood of $K$, by the above construction for every $i=1,...,m$.
Now we fix $\delta >0$ such that for every $i=1,...,m$ we have
\[
\frac{n+2}{n-2} \sum_{j=1}^m a_{ij} \left[2\delta_{ij} C_{i,S}^2 \|\hat{u}_i\|_{L^{2^*}(\Sigma_{\lambda_0+\tau}\setminus K)}^{2^*-2}  + (1-\delta_{ij}) C_{j,S}^2\| \hat{u}_j \|_{L^{2^*}(\Sigma_{\lambda_0+\tau}\setminus K)}^{2^*-2}\right]< \frac{1}{2}, \quad \forall \,\, 0 \le \tau<\bar\tau
\]
which plugged into \eqref{E:finalstepcritrn} implies that
$\displaystyle \int_{\Sigma_{\lambda_0+\tau}\setminus K}|\nabla
w_{i,\lambda_0+\tau}^+|^2\,dx = 0$ for every $0 \le \tau<\bar\tau$ and $i=1,...,m$.
Hence $\displaystyle \int_{\Sigma_{\lambda_0+\tau}}|\nabla
w_{i,\lambda_0+\tau}^+|^2\,dx = 0$ for every $0 \le \tau<\bar\tau$,
since $\nabla w_{i,\lambda_0+\tau}^+$ are zero in a neighborhood of
$K$. The latter and Lemma \ref{E:stimgradcrit} imply that $
w_{i,\lambda_0+\tau}^+ =0 $ on $ \Sigma_{\lambda_0 +\tau}$ for every
$0 \le \tau<\bar\tau$ and $i=1,...,m$, thus $\hat{u}_i\leq \hat{u}_{i,\lambda_0+\tau}$ in $\Sigma_{\lambda_0+\tau}\setminus R_{\lambda_0+\tau}(\Gamma^* \cup
\{0\})$ for every $0 \le \tau<\bar\tau$ and $i=1,...,m$. Which proves the claim of
Step 2.

\noindent \emph{Step 3: conclusion.} The symmetry of the Kelvin
transform $(\hat{u}_1,...,\hat{u}_m)$ follows now performing the moving plane method in the opposite direction. The fact that every $\hat{u}_i$ is symmetric w.r.t. the
hyperplane $\{ x_1=0 \}$ implies the symmetry of the solution $(u_1,...,u_m)$ w.r.t. the hyperplane $\{ x_1=0 \}$. The last claim then follows by the invariance of the considered problem with respect to isometries
(translations and rotations).

\end{proof}

\section{Proof of Theorem \ref{E:main3}}\label{E:mainproof}

\begin{proof}[Proof of Theorem \ref{E:main3}]
	
    As we observed in the proof of Theorem \ref{E:main2}, thanks to a well-known result of Brezis and Kato  \cite{BK} and standard elliptic estimates (see also \cite{Stru}), the solution $(u,v)$ is smooth in $\R^n \setminus \Gamma$.
	Furthermore we recall that it is enough to prove the theorem for the special case in which {\textit {the origin does not belong to}} $\Gamma$.

	\noindent Under this assumption, we consider the map $K: \R^n \setminus \{0\} \longrightarrow \R^n  \setminus \{0\} $ defined by $K = K(x):=\frac{x}{|x|^2}$. Given  $(u,v)$ solution to \eqref{E:doublecriticalsingularequbdd},
	its Kelvin transform is given by
	\begin{equation}\label{E:kelv} \left(\hat{u}(x),\hat{v}(x)\right):=\left( \frac{1}{|x|^{n-2}} u\left(\frac{x}{|x|^2}\right), \frac{1}{|x|^{n-2}} v\left(\frac{x}{|x|^2}\right)\right) \quad
	x \in \R^n \setminus \{\Gamma^* \cup \{0\}\},
	\end{equation}
	where $\Gamma^* = K( \Gamma)$. It follows that $(\hat{u},\hat{v})$ weakly
	satisfies \eqref{E:doublecriticalsingularequbdd} in $\R^n
	\setminus \{\Gamma^* \cup \{0\}\}$ and that $\Gamma^*\subset
	\{x_1=0\}$ since, by assumption, $\Gamma\subset \{x_1=0\}$.
	Furthermore, we also have that $\Gamma^*$ is bounded (not
	necessarily closed) since we assumed that $0\notin\Gamma$.

	\noindent  Let us now fix some notations. We set
	\begin{equation}\label{E:eq:sn2hgjhsdgf} \nonumber
	\Sigma_\lambda=\{x\in\mathbb{R}^n\,:\,x_1 <\lambda\}\,.
	\end{equation}
	As above $x_\lambda=(2\lambda-x_1,x_2,\ldots,x_n)$ is the reflection
	of $x$ through the hyperplane $T_\lambda=\{x=(x_1,...,x_n)\in \R^n \
	| \ x_1=\lambda\}$. Finally we consider the Kelvin transform $(\hat{u}, \hat{v})$ of
	$(u,v)$ defined in \eqref{E:kelv} and we set
	\begin{equation}\label{E:mov} \nonumber
	\begin{split}
	\xi_{\lambda}(x)=\hat{u}(x)-\hat{u}_\lambda (x)= \hat{u}(x)- \hat{u}(x_\lambda),\\
	\zeta_{\lambda}(x)=\hat{v}(x)-\hat{v}_\lambda (x)= \hat{v}(x) - \hat{v}(x_\lambda).
	\end{split}
	\end{equation}
	Note that $(\hat{u}, \hat{v})$ weakly solves
	\begin{equation}\label{E:debil122bissj}
	\begin{array}{lr}
	\displaystyle \int_{\mathbb{R}^n} \nabla \hat{u} \nabla
	\varphi\,dx\,=\,\int_{\mathbb{R}^n} \hat{u}^{2^*-1}\varphi\,dx + \frac{\alpha}{2^*}\int_{\mathbb{R}^n} \hat{u}^{\alpha-1} \hat{v}^\beta	\varphi \,dx \qquad & \forall \varphi\in C^{1}_c(\mathbb{R}^n\setminus
	\Gamma^* \cup \{0\})\,. \\
	\\
	\displaystyle \int_{\mathbb{R}^n} \nabla \hat{v} \nabla
	\psi\,dx\,=\,\int_{\mathbb{R}^n} \hat{v}^{2^*-1}\psi\,dx + \frac{\beta}{2^*}\int_{\mathbb{R}^n} \hat{u}^\alpha \hat{v}^{\beta-1}	\psi \,dx \qquad &\forall \psi \in C^{1}_c(\mathbb{R}^n\setminus
	\Gamma^* \cup \{0\})\,.
	\end{array}
	\end{equation}
	and $(\hat{u}_\lambda, \hat{v}_\lambda)$ weakly solves
	\begin{equation}\label{E:debil122biss}
	\begin{array}{lr}
	\displaystyle \int_{\mathbb{R}^n} \nabla \hat{u}_\lambda \nabla
	\varphi\,dx\,=\,\int_{\mathbb{R}^n} \hat{u}_\lambda^{2^*-1}\varphi\,dx + \frac{\alpha}{2^*}\int_{\mathbb{R}^n} \hat{u}_\lambda^{\alpha-1} \hat{v}_\lambda^\beta	\varphi \,dx \qquad & \forall \varphi\in C^{1}_c(\mathbb{R}^n\setminus
	\Gamma^* \cup \{0\})\,. \\
	
	\\
	\displaystyle \int_{\mathbb{R}^n} \nabla \hat{v}_\lambda \nabla
	\psi\,dx\,=\,\int_{\mathbb{R}^n} \hat{v}_\lambda^{2^*-1}\psi\,dx + \frac{\beta}{2^*}\int_{\mathbb{R}^n} \hat{u}_\lambda^\alpha \hat{v}_\lambda^{\beta-1}	\psi \,dx \qquad & \forall \psi \in C^{1}_c(\mathbb{R}^n\setminus
	\Gamma^* \cup \{0\})\,.
	\end{array}
	\end{equation}
	
	The properties of the Kelvin transform, the fact that
	$0\notin\Gamma$ and the regularity of $u, v$ imply that $\vert \hat{u}(x)
	\vert \le C_u \vert x \vert^{2-N}$ and $\vert \hat{v}(x)
	\vert \le C_v \vert x \vert^{2-N} $ and for every $x \in \R^n$ such that
	$\vert x \vert \ge R$, where $C_u, C_v$ and $R$ are positive constants
	(depending on $u$ and $v$).  In particular, for every $ \lambda <0$, we have
	\begin{equation}\label{E:kelvinp} \nonumber
	\hat{u}, \hat{v}\in L^{2^*}(\Sigma_\lambda)\cap L^{\infty}(\Sigma_\lambda) \cap C^0 (\overline{\Sigma_\lambda}) \,.
	\end{equation}
	
	\begin{lem}\label{E:stimgrad} Under the assumption of Theorem
		\ref{E:main2}, for every $\lambda < 0$, we have that $ \xi_\lambda^+, \zeta_\lambda^+\in
		L^{2^*}(\Sigma_\lambda), \nabla \xi_\lambda^+,\nabla \zeta_\lambda^+  \in L^2(\Sigma_\lambda)
		$ and
		\begin{equation}\label{E:buonastima}
		\begin{split}
		\int_{\Sigma_\lambda}|\nabla \xi_\lambda^+|^2\,dx + \int_{\Sigma_\lambda}|\nabla \zeta_\lambda^+|^2\,dx 
		&\leq 2\frac{n+2}{n-2} \left[(1+\alpha) \|\hat{u}\|^{2^*}_{L^{2^*}(\Sigma_\lambda)} + (1+\beta) \|\hat{v}\|^{2^*}_{L^{2^*}(\Sigma_\lambda)}\right] \\
		\end{split}
		\end{equation}
	\end{lem}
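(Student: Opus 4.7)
The plan is to imitate the proof of Lemma \ref{E:stimgradcrit}, using the same cut-off functions $\psi_\varepsilon$ (killing a neighborhood of $R_\lambda(\Gamma^*\cup\{0\})$) and $\eta_R$ (a standard cut-off on $B_{2R}\setminus B_R$), and the pair of test functions
\[
\varphi_1:=\xi_\lambda^+\psi_\varepsilon^2\eta_R^2\chi_{\Sigma_\lambda},\qquad \varphi_2:=\zeta_\lambda^+\psi_\varepsilon^2\eta_R^2\chi_{\Sigma_\lambda}.
\]
First I would note $0\le\xi_\lambda^+\le\hat u$ and $0\le\zeta_\lambda^+\le\hat v$, which combined with $\hat u,\hat v\in L^{2^*}(\Sigma_\lambda)$ gives $\xi_\lambda^+,\zeta_\lambda^+\in L^{2^*}(\Sigma_\lambda)$. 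Plugging $\varphi_1$ in the first equation of \eqref{E:debil122bissj} and in the first equation of \eqref{E:debil122biss} and subtracting, and likewise for $\varphi_2$ in the second equations, I obtain two identities of the same shape as \eqref{E:subt1}, where the right-hand sides split into a gradient part (controlled by $\nabla\psi_\varepsilon$ and $\nabla\eta_R$ through Young's inequality exactly as in the estimates for $I_1,I_2$ of Lemma \ref{E:stimgradcrit}) and a nonlinear part.

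The nonlinear part now has two contributions per equation: the pure critical term $\hat u^{2^*-1}-\hat u_\lambda^{2^*-1}$ (and its analog in $\hat v$), and the coupling term $\hat u^{\alpha-1}\hat v^\beta-\hat u_\lambda^{\alpha-1}\hat v_\lambda^\beta$ (and its analog). The critical pieces are treated verbatim as the $I_3$ estimate in Lemma \ref{E:stimgradcrit} via convexity of $t\mapsto t^{2^*-1}$. For the coupling pieces I would use the telescoping trick
\[
\hat u^{\alpha-1}\hat v^\beta-\hat u_\lambda^{\alpha-1}\hat v_\lambda^\beta=(\hat u^{\alpha-1}-\hat u_\lambda^{\alpha-1})\hat v^\beta+\hat u_\lambda^{\alpha-1}(\hat v^\beta-\hat v_\lambda^\beta),
\]
and then exploit the assumption $\alpha,\beta\ge 2$: since $t\mapsto t^{\alpha-1}$ and $t\mapsto t^\beta$ are convex on $[0,\infty)$, on $\{\xi_\lambda^+>0\}$ one has $\hat u^{\alpha-1}-\hat u_\lambda^{\alpha-1}\le(\alpha-1)\hat u^{\alpha-2}\xi_\lambda^+$, and one may drop the second summand on $\{\hat v<\hat v_\lambda\}$ while on $\{\hat v\ge\hat v_\lambda\}$ use $\hat v^\beta-\hat v_\lambda^\beta\le\beta\hat v^{\beta-1}\zeta_\lambda^+$. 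The symmetric manipulation, interchanging the roles of $\hat u$ and $\hat v$ and of $\alpha$ and $\beta$, applies to the coupling term in the $\zeta_\lambda^+$ equation. After absorbing the factors $\alpha/2^*$ and $\beta/2^*$, each coupling contribution is controlled by a linear combination of terms of the form $\hat u^{\alpha-2}\hat v^\beta(\xi_\lambda^+)^2$, $\hat u^{\alpha-1}\hat v^{\beta-1}\xi_\lambda^+\zeta_\lambda^+$, and $\hat u^\alpha\hat v^{\beta-2}(\zeta_\lambda^+)^2$.

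To close the estimate, I would add the two identities, pass to the limit $\varepsilon\to 0$, $R\to\infty$ by Fatou's lemma (giving $\nabla\xi_\lambda^+,\nabla\zeta_\lambda^+\in L^2(\Sigma_\lambda)$), and then bound each nonlinear piece by H\"older's inequality with exponents adapted to $\alpha+\beta=2^*$. Concretely, since $\xi_\lambda^+\le\hat u$ and $\zeta_\lambda^+\le\hat v$, each of the three coupling monomials above is pointwise dominated by $\hat u^\alpha\hat v^\beta$ (or by the analogous expression in the symmetric equation), and a single application of H\"older with exponents $2^*/\alpha$ and $2^*/\beta$ gives a bound by a constant multiple of $\|\hat u\|_{L^{2^*}(\Sigma_\lambda)}^\alpha\|\hat v\|_{L^{2^*}(\Sigma_\lambda)}^\beta$, which after using Young's inequality in the form $a^\alpha b^\beta\le(\alpha/2^*)a^{2^*}+(\beta/2^*)b^{2^*}$ yields the right-hand side of \eqref{E:buonastima}. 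The critical pieces contribute the $\|\hat u\|_{L^{2^*}}^{2^*}$ and $\|\hat v\|_{L^{2^*}}^{2^*}$ terms as in Lemma \ref{E:stimgradcrit}. The step I expect to be the main obstacle is the careful sign analysis in the telescoping for the coupling term, because one needs to argue that the pieces one drops are indeed non-positive (this is exactly where $\alpha,\beta\ge 2$ is needed, to have convexity rather than merely monotonicity), and to verify that the cross term $\hat u^{\alpha-1}\hat v^{\beta-1}\xi_\lambda^+\zeta_\lambda^+$ appearing symmetrically in both equations has the right sign after summing. Everything else is a routine adaptation of the arguments already given for Lemma \ref{E:stimgradcrit}.
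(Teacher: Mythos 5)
Your plan follows the paper's proof essentially verbatim: the same cut-offs $\psi_\varepsilon$, $\eta_R$ and test functions, the same decomposition of the right-hand side into gradient terms, the pure critical terms (handled by convexity of $t\mapsto t^{2^*-1}$), and the coupling terms (handled by telescoping, convexity of $t\mapsto t^{\alpha-1}$ and $t\mapsto t^{\beta}$ using $\alpha,\beta\ge 2$, the bounds $\xi_\lambda^+\le\hat u$, $\zeta_\lambda^+\le\hat v$, and then H\"older with exponents $2^*/\alpha$, $2^*/\beta$ followed by Young). The only deviations are immaterial: you telescope through $\hat u_\lambda^{\alpha-1}\hat v^\beta$ instead of $\hat u^{\alpha-1}\hat v_\lambda^\beta$, and your sign analysis on the sets $\{\hat v\ge\hat v_\lambda\}$ and $\{\hat v<\hat v_\lambda\}$ is in fact a touch more careful than the paper's use of absolute values.
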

	
	\begin{proof}
		We immediately see that $\xi_\lambda^+, \zeta_\lambda^+ \in L^{2^*}(\Sigma_\lambda),$
		since $0\leq \xi_\lambda^+\leq \hat{u} \in L^{2^*}(\Sigma_\lambda)$ and $0\leq \zeta_\lambda^+\leq \hat{v} \in L^{2^*}(\Sigma_\lambda)$. The
		rest of the proof follows the lines of the one of Lemma
		\ref{E:leaiuto2}. Arguing as in Section \ref{E:notations}, for every $ \varepsilon>0$,
		we can find a function $\psi_\varepsilon \in C^{0,1}(\R^N, [0,1])$
		such that
		$$\int_{\Sigma_\lambda} |\nabla \psi_\varepsilon|^2 < 4 \varepsilon$$
		and $\psi_\varepsilon = 0$ in an open neighborhood
		$\mathcal{B_{\varepsilon}}$ of $R_{\lambda}(\{\Gamma^* \cup
		\{0\}\})$, with $\mathcal{B_{\varepsilon}} \subset
		\Sigma_{\lambda}$.
		
		Fix $ R_0>0$ such that $R_{\lambda}(\{\Gamma^* \cup \{0\} )\subset
		B_{R_0} $ and, for every $ R > R_0$, let $\eta_R$ be a standard
		cut off function such that $ 0 \le \eta_R \le 1 $ on $ \R^n$,
		$\eta_R=1$ in $B_R$, $\eta_R=0$ outside $B_{2R}$ with
		$|\nabla\eta_R|\leq 2/R,$ and consider
		$$\Phi\,:= \begin{cases}
		\, \xi_\lambda^+\psi_\varepsilon^2\eta_R^2 \, & \text{in}\quad\Sigma_\lambda, \\
		0 &  \text{in}\quad \R^n \setminus  \Sigma_\lambda
		\end{cases} \quad \text{and} \quad
		\Psi\,:= \begin{cases}
		\, \zeta_\lambda^+\psi_\varepsilon^2\eta_R^2 \, & \text{in}\quad\Sigma_\lambda, \\
		0 &  \text{in}\quad \R^n \setminus  \Sigma_\lambda.
		\end{cases}$$
		Now, as in Lemma \ref{E:leaiuto} we see that $ \Phi, \Psi \in C^{0,1}_c(\R^n)$ with $supp(\Phi)$ and  $supp(\Psi)$ contained in $\overline {\Sigma_{\lambda} \cap B_{2R}} \setminus R_{\lambda}(\{\Gamma^* \cup \{0\}\})$ and
		\begin{equation}\label{E:gradvarphi-intero}
		\nabla \Phi = \psi_\varepsilon^2 \eta_R^2 \nabla \xi^+_\lambda +
		2 \xi_\lambda^+  (\psi_\varepsilon^2 \eta_R \nabla \eta_R + \psi_\varepsilon \eta_R^2  \nabla  \psi_\varepsilon),
		\end{equation}
		\begin{equation}\label{E:gradpsi-intero}
		\nabla \Psi = \psi_\varepsilon^2 \eta_R^2 \nabla \zeta^+_\lambda +
		2 \zeta_\lambda^+ (\psi_\varepsilon^2 \eta_R \nabla \eta_R + \psi_\varepsilon \eta_R^2  \nabla  \psi_\varepsilon).
		\end{equation}
		Therefore, by a standard density argument, we can use $\Phi$ and $\Psi$ as test functions respectively in \eqref{E:debil122bissj} and in \eqref{E:debil122biss} so that, subtracting we get
		\begin{equation}\label{E:djfsdjfbskfhasklfh}
		\begin{split}
		\int_{\Sigma_\lambda}|\nabla \xi^+_\lambda|^2 \psi_\varepsilon^2\eta_R^2\,dx&=
		-2\int_{\Sigma_\lambda}\nabla \xi^+_\lambda \nabla \psi_\varepsilon
		\xi_\lambda^+ \psi_\varepsilon\eta_R^2\,dx
		-2\int_{\Sigma_\lambda} \nabla \xi^+_\lambda \nabla \eta_R \xi_\lambda^+ \eta_R \psi_\varepsilon^2\,dx\\
		&+\int_{\Sigma_\lambda} (\hat{u}^{2^*-1}-\hat{u}_\lambda^{2^*-1}) \xi_\lambda^+ \psi_\varepsilon^2\eta_R^2\,dx\,\,\\
		&+ \frac{\alpha}{2^*}\int_{\Sigma_\lambda} (\hat{u}^{\alpha-1}\hat{v}^\beta-\hat{u}^{\alpha-1}_\lambda \hat{v}_\lambda^\beta)\xi_\lambda^+ \psi_\varepsilon^2\eta_R^2\,dx\,\,\\
		&=:\,I_1+I_2+I_3+I_4\,.
		\end{split}
		\end{equation}
		\begin{equation}\label{E:djfsdjfbskfhasklfhbis}
		\begin{split}
		\int_{\Sigma_\lambda}|\nabla \zeta^+_\lambda|^2 \psi_\varepsilon^2\eta_R^2\,dx&=
		-2\int_{\Sigma_\lambda}\nabla \zeta^+_\lambda \nabla \psi_\varepsilon
		\zeta_\lambda^+ \psi_\varepsilon\eta_R^2\,dx
		-2\int_{\Sigma_\lambda} \nabla \zeta^+_\lambda \nabla \eta_R \zeta_\lambda^+ \eta_R \psi_\varepsilon^2\,dx\\
		&+\int_{\Sigma_\lambda} (\hat{v}^{2^*-1}-\hat{v}_\lambda^{2^*-1}) \zeta_\lambda^+ \psi_\varepsilon^2\eta_R^2\,dx\,\,\\
		&+ \frac{\beta}{2^*}\int_{\Sigma_\lambda} (\hat{u}^\alpha\hat{v}^{\beta-1}-\hat{u}^\alpha_\lambda \hat{v}_\lambda^{\beta-1})\zeta_\lambda^+ \psi_\varepsilon^2\eta_R^2\,dx\,\\
		&=:\,E_1+E_2+E_3+E_4\,.
		\end{split}
		\end{equation}
		
		Exploiting also Young's inequality and recalling that
		$0\leq \xi_\lambda^+\leq \hat{u}$ and $0\leq \zeta_\lambda^+\leq \hat{v}$, we get that
		\begin{equation}\label{E:I1}
		\begin{split}
		|I_1|&\leq \frac{1}{4} \int_{\Sigma_\lambda}|\nabla \xi^+_\lambda|^2 \psi_\varepsilon^2\eta_R^2\,dx
		+4\int_{\Sigma_\lambda}| \nabla \psi_\varepsilon|^2(\xi_\lambda^+)^2\eta_R^2\,dx\\
		&\leq  \frac{1}{4} \int_{\Sigma_\lambda}|\nabla \xi^+_\lambda|^2 \psi_\varepsilon^2\eta_R^2\,dx
		+ 16 \varepsilon \|\hat{u}\|^2_{L^\infty(\Sigma_\lambda)} .\\
		\end{split}
		\end{equation}
		\begin{equation}\label{E:E1}
		\begin{split}
		|E_1|&\leq \frac{1}{4} \int_{\Sigma_\lambda}|\nabla \zeta^+_\lambda|^2 \psi_\varepsilon^2\eta_R^2\,dx
		+4\int_{\Sigma_\lambda}| \nabla \psi_\varepsilon|^2(\zeta_\lambda^+)^2\eta_R^2\,dx\\
		&\leq  \frac{1}{4} \int_{\Sigma_\lambda}|\nabla \zeta^+_\lambda|^2 \psi_\varepsilon^2\eta_R^2\,dx
		+ 16 \varepsilon \|\hat{v}\|^2_{L^\infty(\Sigma_\lambda)} .\\
		\end{split}
		\end{equation}
		
		Furthermore we have that
		\begin{equation}\label{E:I2}
		\begin{split}
		|I_2|&\leq \frac{1}{4} \int_{\Sigma_\lambda}|\nabla \xi^+_\lambda|^2 \psi_\varepsilon^2\eta_R^2\,dx
		+4\int_{\Sigma_\lambda\cap(B_{2R}\setminus B_{R})}|\nabla \eta_R|^2(\xi_\lambda^+)^2 \psi_\varepsilon^2\,dx\\
		&\leq  \frac{1}{4} \int_{\Sigma_\lambda}|\nabla \xi^+_\lambda|^2 \psi_\varepsilon^2 \eta_R^2\,dx \\
		&+ 4\left(\int_{\Sigma_\lambda\cap(B_{2R}\setminus B_{R})}|\nabla
		\eta_R|^n\,dx\right)^{\frac{2}{n}}
		\left(\int_{\Sigma_\lambda\cap(B_{2R}\setminus B_{R})}\hat{u}^{2^*}\,dx\right)^{\frac{n-2}{n}}\\
		&\leq  \frac{1}{4} \int_{\Sigma_\lambda}|\nabla \xi^+_\lambda|^2 \psi_\varepsilon^2\eta_R^2\,dx\,+\,
		c(n) \left(\int_{\Sigma_\lambda\cap(B_{2R}\setminus B_{R})} \hat{u}^{2^*}\,dx\right)^{\frac{n-2}{n}}.
		\end{split}
		\end{equation}
		\begin{equation}\label{E:E2}
		\begin{split}
		|E_2|&\leq \frac{1}{4} \int_{\Sigma_\lambda}|\nabla \zeta^+_\lambda|^2 \psi_\varepsilon^2\eta_R^2\,dx
		+4\int_{\Sigma_\lambda\cap(B_{2R}\setminus B_{R})}|\nabla \eta_R|^2(\zeta_\lambda^+)^2 \psi_\varepsilon^2\,dx\\
		&\leq  \frac{1}{4} \int_{\Sigma_\lambda}|\nabla \zeta^+_\lambda|^2 \psi_\varepsilon^2 \eta_R^2\,dx \\
		&+ 4\left(\int_{\Sigma_\lambda\cap(B_{2R}\setminus B_{R})}|\nabla
		\eta_R|^n\,dx\right)^{\frac{2}{n}}
		\left(\int_{\Sigma_\lambda\cap(B_{2R}\setminus B_{R})}\hat{v}^{2^*}\,dx\right)^{\frac{n-2}{n}}\\
		&\leq  \frac{1}{4} \int_{\Sigma_\lambda}|\nabla \zeta^+_\lambda|^2 \psi_\varepsilon^2\eta_R^2\,dx\,+\,
		c(n) \left(\int_{\Sigma_\lambda\cap(B_{2R}\setminus B_{R})} \hat{v}^{2^*}\,dx\right)^{\frac{n-2}{n}}
		\end{split}
		\end{equation}
		where $c(n)$ is a positive constant depending only on the dimension $n$.
		
		Let us now estimate $I_3$ and $E_3$. Since $\hat{u}(x), \hat{u}_\lambda(x), \hat{v}(x), \hat{v}_\lambda(x)>0$, by the
		convexity of $ t \to t^{2^*-1},$ for $ t >0$, we obtain
		$$\hat{u}^{2^*-1}(x)-\hat{u}_\lambda^{2^*-1}(x) \le  \frac{n+2}{n-2}
		\hat{u}_\lambda^{2^*-2}(x) (\hat{u}(x) - \hat{u}_\lambda(x))$$ and $$\hat{v}^{2^*-1}(x)-\hat{v}_\lambda^{2^*-1}(x) \le  \frac{n+2}{n-2}
		\hat{v}_\lambda^{2^*-2}(x) (\hat{v}(x) - \hat{v}_\lambda(x)),$$ for every $x \in
		\Sigma_{\lambda}$. Thus, by making use of the monotonicity of $ t
		\to t^{2^*-2}$, for $ t >0$ and the definition of $\xi_\lambda^+$ and $\zeta_\lambda^+$ we
		get  $$(\hat{u}^{2^*-1}-\hat{u}_\lambda^{2^*-1})\xi_\lambda^+ \le  \frac{n+2}{n-2}
		\hat{u}_\lambda^{2^*-2}(\hat{u}-\hat{u}_\lambda) \xi_\lambda^+ \le \frac{n+2}{n-2}
		\hat{u}^{2^*-2}(\xi_\lambda^+)^2$$ and $$(\hat{v}^{2^*-1}-\hat{v}_\lambda^{2^*-1})\zeta_\lambda^+ \le  \frac{n+2}{n-2}
		\hat{v}_\lambda^{2^*-2}(\hat{v}-\hat{v}_\lambda) \zeta_\lambda^+ \le \frac{n+2}{n-2}
		\hat{v}^{2^*-2}(\zeta_\lambda^+)^2.$$ Therefore
		\begin{equation}\label{E:I3}
		\begin{split}
		|I_3|& \leq
		\frac{n+2}{n-2} \int_{\Sigma_\lambda}
		\hat{u}^{2^*-2}(\xi_\lambda^+)^2 \psi_\varepsilon^2\eta_R^2\,dx\,\\
		&\leq \frac{n+2}{n-2} \int_{\Sigma_\lambda} \hat{u}^{2^*-2}\hat{u}^2 dx\,=
		\frac{n+2}{n-2} \int_{\Sigma_\lambda} \hat{u}^{2^*}\,dx = \frac{n+2}{n-2} \|\hat{u}\|^{2^*}_{L^{2^*}(\Sigma_\lambda)}\\
		\end{split}
		\end{equation}
		\begin{equation}\label{E:E3}
		\begin{split}
		|E_3|& \leq
		\frac{n+2}{n-2} \int_{\Sigma_\lambda}
		\hat{v}^{2^*-2}(\zeta_\lambda^+)^2 \psi_\varepsilon^2\eta_R^2\,dx\,\\
		&\leq \frac{n+2}{n-2} \int_{\Sigma_\lambda} \hat{v}^{2^*-2}\hat{v}^2 dx\,=
		\frac{n+2}{n-2} \int_{\Sigma_\lambda} \hat{v}^{2^*}\,dx = \frac{n+2}{n-2} \|\hat{v}\|^{2^*}_{L^{2^*}(\Sigma_\lambda)}\\
		\end{split}
		\end{equation}
		where we also used that $0\leq \xi_\lambda^+\leq \hat{u}$ and $0\leq \zeta_\lambda^+\leq \hat{v}$.
		
		Finally we have to estimate $I_4$ and $E_4$. Since $\hat{u}(x), \hat{u}_\lambda(x), \hat{v}(x), \hat{v}_\lambda(x)>0$, by the
		convexity of the functions $t \to t^{\alpha}, t \to t^{\alpha-1}, t \to t^{\beta}, t \to t^{\beta-1}$ for $ t >0$, we obtain 	
		\begin{equation} \nonumber
		\begin{split}
		\hat{u}^{\alpha}(x)-\hat{u}_\lambda^{\alpha}(x) &\le  \alpha
		\hat{u}_\lambda^{\alpha-1}(x) (\hat{u}(x) - \hat{u}_\lambda(x)),\\
		\hat{u}^{\alpha-1}(x)-\hat{u}_\lambda^{\alpha-1}(x) &\le  (\alpha-1)
		\hat{u}_\lambda^{\alpha-2}(x) (\hat{u}(x) - \hat{u}_\lambda(x)),\\ \hat{v}^{\beta}(x)-\hat{v}_\lambda^{\beta}(x) &\le  \beta
		\hat{v}_\lambda^{\beta-1}(x) (\hat{v}(x) - \hat{v}_\lambda(x)), \\ \hat{v}^{\beta-1}(x)-\hat{v}_\lambda^{\beta-1}(x) &\le  (\beta-1)
		\hat{v}_\lambda^{\beta-2}(x) (\hat{v}(x) - \hat{v}_\lambda(x)),
		\end{split}
		\end{equation} 
		for every $x \in \Sigma_{\lambda}$. By the monotonicity of $t \to t^{\alpha}, t \to t^{\alpha-1}, t \to t^{\beta}, t \to t^{\beta-1}$ for $ t >0$ and the definition of $\xi_\lambda^+$ and $\zeta_\lambda^+$ we get 
		\begin{equation} \nonumber
		\begin{split} 
		(\hat{u}^{\alpha}(x)-\hat{u}_\lambda^{\alpha}(x))\xi_\lambda^+ &\le  \alpha
		\hat{u}_\lambda^{\alpha-2}(\hat{u}-\hat{u}_\lambda) \xi_\lambda^+ \le \alpha
		\hat{u}^{\alpha-2}(\xi_\lambda^+)^2,\\
		(\hat{u}^{\alpha-1}(x)-\hat{u}_\lambda^{\alpha-1}(x))\xi_\lambda^+ &\le  (\alpha-1)\hat{u}_\lambda^{\alpha-2}(\hat{u}-\hat{u}_\lambda) \xi_\lambda^+ \le (\alpha-1) \hat{u}^{\alpha-2}(\xi_\lambda^+)^2,\\ (\hat{v}^{\beta}-\hat{v}_\lambda^{\beta})\zeta_\lambda^+ &\le  \beta
		\hat{v}_\lambda^{\beta-2}(\hat{v}-\hat{v}_\lambda) \zeta_\lambda^+ \le \beta
		\hat{v}^{\beta-2}(\zeta_\lambda^+)^2,\\  (\hat{v}^{\beta-1}-\hat{v}_\lambda^{\beta-1})\zeta_\lambda^+ &\le  (\beta-1)
		\hat{v}_\lambda^{\beta-2}(\hat{v}-\hat{v}_\lambda) \zeta_\lambda^+ \le (\beta-1)
		\hat{v}^{\beta-2}(\zeta_\lambda^+)^2.
		\end{split}
		\end{equation}  
		Now, having in mind all these estimates, we need a fine analysis in view of the cooperativity of the system. Since $\alpha+\beta=2^*=\frac{2n}{n-2}$ and $\alpha,\beta \geq 2$ we have to split

		\begin{equation}\label{E:I_4}
		\begin{split}
		|I_4| &\leq \frac{\alpha}{2^*}\int_{\Sigma_\lambda} |\hat{u}^{\alpha-1}\hat{v}^\beta-\hat{u}^{\alpha-1} \hat{v}_\lambda^\beta|\xi_\lambda^+ \psi_\varepsilon^2\eta_R^2\,dx\,+ \frac{\alpha}{2^*}\int_{\Sigma_\lambda} |\hat{u}^{\alpha-1}\hat{v}_\lambda^\beta-\hat{u}^{\alpha-1}_\lambda \hat{v}_\lambda^\beta|\xi_\lambda^+ \psi_\varepsilon^2\eta_R^2\,dx\,\\
		& \leq \frac{\alpha \beta}{2^*}\int_{\Sigma_\lambda} \hat{u}^{\alpha-1} \hat{v}_\lambda^{\beta-1} \xi_\lambda^+ \zeta_\lambda^+  \psi_\varepsilon^2\eta_R^2\,dx\,+ \frac{\alpha (\alpha-1)}{2^*}\int_{\Sigma_\lambda} \hat{u}_\lambda^{\alpha-2} \hat{v}_\lambda^{\beta}(\xi_\lambda^+)^2 \psi_\varepsilon^2\eta_R^2\,dx\,\\
		& \leq \frac{\alpha \beta}{2^*}\int_{\Sigma_\lambda} \hat{u}^{\alpha-1} \hat{v}^{\beta-1}\hat{u} \hat{v}\psi_\varepsilon^2\eta_R^2\,dx\,+ \frac{\alpha (\alpha-1)}{2^*}\int_{\Sigma_\lambda} \hat{u}^{\alpha-2} \hat{v}^{\beta}\hat{u}^2 \psi_\varepsilon^2\eta_R^2\,dx\,\\
		&\leq\frac{\alpha \beta}{2^*}\int_{\Sigma_\lambda} \hat{u}^{\alpha} \hat{v}^{\beta}\,dx\,+ \frac{\alpha (\alpha-1)}{2^*}\int_{\Sigma_\lambda} \hat{u}^{\alpha} \hat{v}^{\beta}\,dx\,\\
		&=\frac{\alpha (2^*-1)}{2^*}\int_{\Sigma_\lambda} \hat{u}^{\alpha} \hat{v}^{\beta}\,dx,
		\end{split}
		\end{equation}

		\begin{equation}\label{E:E_4}
		\begin{split}
		|E_4| &\leq \frac{\beta}{2^*}\int_{\Sigma_\lambda} |\hat{u}^{\alpha}\hat{v}^{\beta-1}-\hat{u}_\lambda^{\alpha} \hat{v}^{\beta-1}|\zeta_\lambda^+ \psi_\varepsilon^2\eta_R^2\,dx\,+ \frac{\beta}{2^*}\int_{\Sigma_\lambda} |\hat{u}_\lambda^{\alpha}\hat{v}^{\beta-1}-\hat{u}^{\alpha}_\lambda \hat{v}_\lambda^{\beta-1}|\zeta_\lambda^+ \psi_\varepsilon^2\eta_R^2\,dx\,\\
		& \leq \frac{\alpha \beta}{2^*}\int_{\Sigma_\lambda} \hat{u}_\lambda^{\alpha-1} \hat{v}^{\beta-1} \xi_\lambda^+ \zeta_\lambda^+  \psi_\varepsilon^2\eta_R^2\,dx\,+ \frac{\beta (\beta-1)}{2^*}\int_{\Sigma_\lambda} \hat{u}_\lambda^{\alpha} \hat{v}_\lambda^{\beta-2}(\zeta_\lambda^+)^2 \psi_\varepsilon^2\eta_R^2\,dx\,\\
    	& \leq \frac{\alpha \beta}{2^*}\int_{\Sigma_\lambda} \hat{u}^{\alpha-1} \hat{v}^{\beta-1}\hat{u} \hat{v}\psi_\varepsilon^2\eta_R^2\,dx\,+ \frac{\beta (\beta-1)}{2^*}\int_{\Sigma_\lambda} \hat{u}^{\alpha} \hat{v}^{\beta-2}\hat{v}^2 \psi_\varepsilon^2\eta_R^2\,dx\,\\
    	&\leq\frac{\alpha \beta}{2^*}\int_{\Sigma_\lambda} \hat{u}^{\alpha} \hat{v}^{\beta}\,dx\,+ \frac{\alpha (\alpha-1)}{2^*}\int_{\Sigma_\lambda} \hat{u}^{\alpha} \hat{v}^{\beta}\,dx\,\\
    	&=  \frac{\beta (2^*-1)}{2^*} \int_{\Sigma_\lambda} \hat{u}^{\alpha} \hat{v}^{\beta}\,dx.
	    \end{split}
		\end{equation}

		Hence, by applying H\"older inequality with exponents $\displaystyle \left(\frac{\alpha}{2^*}, \frac{\beta}{2^*}\right)$ it follows that 
		\begin{equation} \nonumber
		|I_4|+|E_4| \leq (2^*-1) \int_{\Sigma_\lambda} \hat{u}^{\alpha} \hat{v}^{\beta}\,dx\, \leq (2^*-1) \| \hat{u}\|_{L^{2^*}(\Sigma_\lambda)}^\alpha \| \hat{v}\|_{L^{2^*}(\Sigma_\lambda)}^\beta.
		\end{equation}

		Taking into account the estimates on $I_1$, $I_2$, $I_3$, $I_4$, $E_1$, $E_2$, $E_3$ and $E_4$, by adding \eqref{E:djfsdjfbskfhasklfh} and \eqref{E:djfsdjfbskfhasklfhbis}, we deduce that5		
		\begin{equation}\label{E:primastima} \nonumber
		\begin{split}
		\int_{\Sigma_\lambda}|\nabla \xi^+_\lambda|^2 \psi_\varepsilon^2\eta_R^2\,dx + \int_{\Sigma_\lambda}|\nabla \zeta^+_\lambda|^2 \psi_\varepsilon^2\eta_R^2\,dx
		&\leq 32 \varepsilon \left(\|\hat{u}\|^2_{L^\infty(\Sigma_\lambda)} + \|\hat{v}\|^2_{L^\infty(\Sigma_\lambda)}\right)\\
		&+ 2c(n) \left(\int_{\Sigma_\lambda\cap(B_{2R}\setminus B_{R})} \hat{u}^{2^*}\,dx\right)^{\frac{n-2}{n}}\\
		&+ 2c(n) \left(\int_{\Sigma_\lambda\cap(B_{2R}\setminus B_{R})} \hat{v}^{2^*}\,dx\right)^{\frac{n-2}{n}}\\
		&+ 2\frac{n+2}{n-2} \left(\|\hat{u}\|^{2^*}_{L^{2^*}(\Sigma_\lambda)} + \|\hat{v}\|^{2^*}_{L^{2^*}(\Sigma_\lambda)}\right) \\
		&+ 2  (2^*-1) \| \hat{u}\|_{L^{2^*}(\Sigma_\lambda)}^\alpha \| \hat{v}\|_{L^{2^*}(\Sigma_\lambda)}^\beta .
		\end{split}
		\end{equation}
		
		By Fatou Lemma, as $\varepsilon$ tends to zero and $R$ tends to
		infinity,  we deduce that $\nabla \xi_\lambda^+, \nabla \zeta_\lambda^+ 
		\in L^2(\Sigma_{\lambda})$.  We also note that $ \Phi \to
		\xi_\lambda^+$ and $\Psi \to \zeta_\lambda^+$ in $L^{2^*}(\Sigma_\lambda)$, by definition of $\Phi$ and $\Psi$, and that $\nabla \Phi \to \nabla \xi_\lambda^+$ and $\nabla \Psi \to \nabla \zeta_\lambda^+$ in $L^2(\Sigma_{\lambda})$, by	\eqref{E:gradvarphi-intero}, \eqref{E:gradpsi-intero} and the fact that $\xi_\lambda^+, \zeta_\lambda^+ \in
		L^{2^*}(\Sigma_\lambda)$.  Therefore 
		\begin{equation}\label{E:thesis}
		\begin{split}
		\int_{\Sigma_\lambda}|\nabla \xi^+_\lambda|^2 \,dx + \int_{\Sigma_\lambda}|\nabla \zeta^+_\lambda|^2 \,dx &\leq  2\frac{n+2}{n-2} \left(\|\hat{u}\|^{2^*}_{L^{2^*}(\Sigma_\lambda)} + \|\hat{v}\|^{2^*}_{L^{2^*}(\Sigma_\lambda)}\right) \\
		&+ 2  (2^*-1) \| \hat{u}\|_{L^{2^*}(\Sigma_\lambda)}^\alpha \| \hat{v}\|_{L^{2^*}(\Sigma_\lambda)}^\beta.
		\end{split}
		\end{equation}
	    Exploiting Young inequality in the right hand side of \eqref{E:thesis}, with conjugate exponents $\displaystyle \left(\frac{\alpha}{2^*}, \frac{\beta}{2^*}\right)$, we obtain \eqref{E:buonastima}.

	\end{proof}
	
	We can now complete the proof of Theorem \ref{E:main3}. As for the proof of Theorem \ref{E:semilinearSym} and Theorem \ref{E:main2},  we split the proof into three steps and we start with
	
	\noindent \emph{Step 1: there exists $M>1$ such that $\hat{u} \leq \hat{u}_\lambda$ and $\hat{v} \leq \hat{v}_\lambda$ in $\Sigma_\lambda\setminus R_\lambda(\Gamma^* \cup \{0\})$, for all $\lambda< -M$.}
	
	Arguing as in the proof of Lemma \ref{E:stimgrad} and using the same notations and the same construction
	for $\psi_\varepsilon$, $\eta_R$, $\varphi$ and $\psi$, we get
\begin{equation}\label{E:djfsdjfbskfhasklfh2} \nonumber
\begin{split}
\int_{\Sigma_\lambda}|\nabla \xi^+_\lambda|^2 \psi_\varepsilon^2\eta_R^2\,dx&=
-2\int_{\Sigma_\lambda}\nabla \xi^+_\lambda \nabla \psi_\varepsilon
\xi_\lambda^+ \psi_\varepsilon\eta_R^2\,dx
-2\int_{\Sigma_\lambda} \nabla \xi^+_\lambda \nabla \eta_R \xi_\lambda^+ \eta_R \psi_\varepsilon^2\,dx\\
&+\int_{\Sigma_\lambda} (\hat{u}^{2^*-1}-\hat{u}_\lambda^{2^*-1}) \xi_\lambda^+ \psi_\varepsilon^2\eta_R^2\,dx\,\,\\
&+ \frac{\alpha}{2^*}\int_{\Sigma_\lambda} (\hat{u}^{\alpha-1}\hat{v}^\beta-\hat{u}^{\alpha-1}_\lambda \hat{v}_\lambda^\beta)\xi_\lambda^+ \psi_\varepsilon^2\eta_R^2\,dx\,\,\\
&=:\,I_1+I_2+I_3+I_4\,.
\end{split}
\end{equation}
\begin{equation}\label{E:djfsdjfbskfhasklfhbis2} \nonumber
\begin{split}
\int_{\Sigma_\lambda}|\nabla \zeta^+_\lambda|^2 \psi_\varepsilon^2\eta_R^2\,dx&=
-2\int_{\Sigma_\lambda}\nabla \zeta^+_\lambda \nabla \psi_\varepsilon
\zeta_\lambda^+ \psi_\varepsilon\eta_R^2\,dx
-2\int_{\Sigma_\lambda} \nabla \zeta^+_\lambda \nabla \eta_R \zeta_\lambda^+ \eta_R \psi_\varepsilon^2\,dx\\
&+\int_{\Sigma_\lambda} (\hat{v}^{2^*-1}-\hat{v}_\lambda^{2^*-1}) \zeta_\lambda^+ \psi_\varepsilon^2\eta_R^2\,dx\,\,\\
&+ \frac{\beta}{2^*}\int_{\Sigma_\lambda} (\hat{u}^\alpha\hat{v}^{\beta-1}-\hat{u}^\alpha_\lambda \hat{v}_\lambda^{\beta-1})\zeta_\lambda^+ \psi_\varepsilon^2\eta_R^2\,dx\,\\
&=:\,E_1+E_2+E_3+E_4\,.
\end{split}
\end{equation}
	where $I_1, \ E_1, \ I_2, \ E_2, \ I_3, \ E_3, \ I_4$ and $E_4$ can be estimated exactly as in \eqref{E:I1}, \eqref{E:E1}, \eqref{E:I2}, \eqref{E:E2}, \eqref{E:I3}, \eqref{E:E3}, \eqref{E:I_4} and \eqref{E:E_4}. The latter yield
	\begin{equation}\nonumber
	\begin{split}
	\int_{\Sigma_\lambda}\left(|\nabla \xi^+_\lambda|^2 +|\nabla \zeta^+_\lambda|^2 \right)  &\psi_\varepsilon^2\eta_R^2 \,dx 
	\leq 32 \varepsilon \left(\|\hat{u}\|^2_{L^\infty(\Sigma_\lambda)} + \|\hat{v}\|^2_{L^\infty(\Sigma_\lambda)}\right) \\
	&+ 2c(n) \left(\int_{\Sigma_\lambda\cap(B_{2R}\setminus B_{R})} \hat{u}^{2^*}\,dx\right)^{\frac{2}{2^*}}\\
	&+ 2c(n) \left(\int_{\Sigma_\lambda\cap(B_{2R}\setminus B_{R})} \hat{v}^{2^*}\,dx\right)^{\frac{2}{2^*}} + 2 \frac{n+2}{n-2} \int_{\Sigma_\lambda}
	\hat{u}^{2^*-2}(\xi_\lambda^+)^2 \psi_\varepsilon^2\eta_R^2\,dx\, \\
	&+ 2 \frac{n+2}{n-2} \int_{\Sigma_\lambda}
	\hat{v}^{2^*-2}(\zeta_\lambda^+)^2 \psi_\varepsilon^2\eta_R^2\,dx\, + 4\frac{\alpha \beta}{2^*}\int_{\Sigma_\lambda} \hat{u}^{\alpha-1} \hat{v}^{\beta-1} \xi_\lambda^+ \zeta_\lambda^+  \psi_\varepsilon^2\eta_R^2\,dx\, \\
	&+ \frac{\alpha (\alpha-1)}{2^*}\int_{\Sigma_\lambda} \hat{u}^{\alpha-2} \hat{v}^{\beta}(\xi_\lambda^+)^2 \psi_\varepsilon^2\eta_R^2\,dx\,\\
	&+ \frac{\beta (\beta-1)}{2^*}\int_{\Sigma_\lambda} \hat{u}^{\alpha} \hat{v}^{\beta-2}(\zeta_\lambda^+)^2 \psi_\varepsilon^2\eta_R^2\,dx.\\
	\end{split}
	\end{equation}
	Passing to the limit in the latter, as $\varepsilon$ tends to zero and $R$ tends to infinity, we obtain
	\begin{equation} \nonumber
	\begin{split}
	\int_{\Sigma_\lambda}|\nabla \xi^+_\lambda|^2 \,dx + \int_{\Sigma_\lambda}|\nabla \zeta^+_\lambda|^2 \,dx &\leq 2 \frac{n+2}{n-2} \left(\int_{\Sigma_\lambda}
	\hat{u}^{2^*-2}(\xi_\lambda^+)^2 \,dx\, + \int_{\Sigma_\lambda} \hat{v}^{2^*-2}(\zeta_\lambda^+)^2 \,dx\, \right)\\  
	& + 4\frac{\alpha \beta}{2^*}\int_{\Sigma_\lambda} \hat{u}^{\alpha-1} \hat{v}^{\beta-1} \xi_\lambda^+ \zeta_\lambda^+ \,dx\,\\
	&+ \frac{\alpha (\alpha-1)}{2^*}\int_{\Sigma_\lambda} \hat{u}^{\alpha-2} \hat{v}^{\beta}(\xi_\lambda^+)^2 \,dx\,\\
	&+ \frac{\beta (\beta-1)}{2^*}\int_{\Sigma_\lambda} \hat{u}^{\alpha} \hat{v}^{\beta-2}(\zeta_\lambda^+)^2 \,dx\, < +\infty.
	\end{split}
	\end{equation}
	which combined with Young inequality gives
	\begin{equation}\label{E:kushfkusfksf}
	\begin{split}
	\int_{\Sigma_\lambda}|\nabla \xi^+_\lambda|^2 \,dx + \int_{\Sigma_\lambda}|\nabla \zeta^+_\lambda|^2 \,dx &\leq 2 \frac{n+2}{n-2} \left(\int_{\Sigma_\lambda}
	\hat{u}^{2^*-2}(\xi_\lambda^+)^2 \,dx\, + \int_{\Sigma_\lambda} \hat{v}^{2^*-2}(\zeta_\lambda^+)^2 \,dx\, \right)\\  
	&+ \frac{\alpha (2^*+\beta-1)}{2^*}\int_{\Sigma_\lambda} \hat{u}^{\alpha-2} \hat{v}^{\beta}(\xi_\lambda^+)^2 \,dx\,\\
	&+ \frac{\beta (2^*+\beta-1)}{2^*}\int_{\Sigma_\lambda} \hat{u}^{\alpha} \hat{v}^{\beta-2}(\zeta_\lambda^+)^2 \,dx\,\\
	&=:A_1+A_2+A_3.
	\end{split}
	\end{equation}
	
	Exploiting H\"older inequality with conjugate exponents $\displaystyle \left( \frac{2^*}{2^*-2}, \frac{2^*}{2}\right)$ we obtain
	
	\begin{equation} \label{E:A_1}
    \begin{split}
    |A_1| \leq 2 \frac{n+2}{n-2} \left[\left(\int_{\Sigma_\lambda}
    \hat{u}^{2^*} \, dx\right)^\frac{2}{n} \left(\int_{\Sigma_\lambda} (\xi_\lambda^+)^{2^*} \,dx\right)^{\frac{2}{2^*}} +\left(\int_{\Sigma_\lambda} \hat{v}^{2^*} \, dx \right)^{\frac{2}{n}} \left(\int_{\Sigma_\lambda}(\zeta_\lambda^+)^{2^*} \,dx\right)^{\frac{2}{2^*}} \right].
    \end{split} 
	\end{equation}
	
	Exploiting H\"older inequality with conjugate exponents $\displaystyle \left( \frac{2^*}{\alpha-2}, \frac{2^*}{\beta}, \frac{2^*}{2}\right)$ \big(we note that if $\alpha=2$ we have $\beta=2$ and the conjugate exponents would be $\left(\frac{2^*}{2},\frac{2^*}{2}\right)$\big) we obtain
	\begin{equation} \label{E:A_2}
	\begin{split}
	|A_2| \leq  \frac{\alpha(2^*+\beta-1)}{2^*} \left(\int_{\Sigma_\lambda}
	\hat{u}^{2^*} \, dx\right)^\frac{\alpha-2}{2^*} \left(\int_{\Sigma_\lambda}
	\hat{v}^{2^*} \, dx\right)^\frac{\beta}{2^*} \left(\int_{\Sigma_\lambda} (\xi_\lambda^+)^{2^*} \,dx\right)^{\frac{2}{2^*}}.
	\end{split} 
	\end{equation}
	
	Exploiting H\"older inequality with conjugate exponents $\displaystyle \left( \frac{2^*}{\alpha}, \frac{2^*}{\beta-2}, \frac{2^*}{2}\right)$ \big(we note that if $\beta=2$ we have $\alpha=2$ and the conjugate exponents would be $\left(\frac{2^*}{2},\frac{2^*}{2}\right)$\big)we obtain
	\begin{equation} \label{E:A_3}
	\begin{split}
	|A_3| \leq \frac{\beta(2^*+\alpha-1)}{2^*}\left(\int_{\Sigma_\lambda}
	\hat{u}^{2^*} \, dx\right)^\frac{\alpha}{2^*} \left(\int_{\Sigma_\lambda}
	\hat{v}^{2^*} \, dx\right)^\frac{\beta-2}{2^*} \left(\int_{\Sigma_\lambda} (\zeta_\lambda^+)^{2^*} \,dx\right)^{\frac{2}{2^*}}. 
	\end{split} 
	\end{equation}
	
	Combining \eqref{E:A_1}, \eqref{E:A_2} and \eqref{E:A_3} and applying Sobolev inequality to \eqref{E:kushfkusfksf}
	\begin{equation}\label{E:finaldoublicrit} \nonumber
	\begin{split}
	\int_{\Sigma_\lambda}|\nabla \xi^+_\lambda|^2 \,dx + \int_{\Sigma_\lambda}|\nabla \zeta^+_\lambda|^2 \,dx &\leq C_1 \int_{\Sigma_\lambda}|\nabla \xi^+_\lambda|^2 \,dx + C_2\int_{\Sigma_\lambda}|\nabla \zeta^+_\lambda|^2 \,dx,
	\end{split}
	\end{equation}
	
	where $C_1:=\left[2 \frac{n+2}{n-2} \| \hat{u}\|^{\beta}_{L^{2^*}(\Sigma_\lambda)} + \frac{\alpha (2^*+\beta-1)}{2^*} \| \hat{v}\|^{\beta}_{L^{2^*}(\Sigma_\lambda)}\right] C^2_{u,S} \| \hat{u}\|^{\alpha-2}_{L^{2^*}(\Sigma_\lambda)}$, \\
	$C_2:= \left[2 \frac{n+2}{n-2} \| \hat{v}\|^{\alpha}_{L^{2^*}(\Sigma_\lambda)} +  \frac{\beta (2^*+\beta-1)}{2^*} \|\hat{u}\|^{\alpha}_{L^{2^*}(\Sigma_\lambda)} \right] C^2_{v,S} \| \hat{v}\|^{\beta-2}_{L^{2^*}(\Sigma_\lambda)}$, $C_{u,S}$ and $C_{v,S}$ are the Sobolev constants. Recalling that $\hat{u}, \hat{v} \in L^{2^*}(\Sigma_\lambda)$, we deduce the existence of $ M>1$ such that
	\[
	C_1:=\left[2 \frac{n+2}{n-2} \| \hat{u}\|^{\beta}_{L^{2^*}(\Sigma_\lambda)} + \frac{\alpha (2^*+\beta-1)}{2^*} \| \hat{v}\|^{\beta}_{L^{2^*}(\Sigma_\lambda)}\right] C^2_{u,S} \| \hat{u}\|^{\alpha-2}_{L^{2^*}(\Sigma_\lambda)}<1\,
	\]
	and 
	\[
	C_2:= \left[2 \frac{n+2}{n-2} \| \hat{v}\|^{\alpha}_{L^{2^*}(\Sigma_\lambda)} +  \frac{\beta (2^*+\beta-1)}{2^*} \|\hat{u}\|^{\alpha}_{L^{2^*}(\Sigma_\lambda)} \right] C^2_{v,S} \| \hat{v}\|^{\beta-2}_{L^{2^*}(\Sigma_\lambda)} < 1
	\]
	for every $\lambda < -M$. The latter and \eqref{E:kushfkusfksf} lead
	to
	\[
	\int_{\Sigma_\lambda}|\nabla \xi^+_\lambda|^2 \,dx\,=0 \text{\quad and \quad}  \int_{\Sigma_\lambda}|\nabla \zeta^+_\lambda|^2 \,dx\, =0.
	\]
	This implies that $\xi_\lambda^+=\zeta_\lambda^+=0$ by Lemma \ref{E:stimgrad} and the
	claim is proved.\\
	
	\noindent To proceed further we
	define
	\begin{equation}\nonumber
	\Lambda_0=\{\lambda<0 : \hat{u}\leq \hat{u}_{t}\,\,\,\text{and}\,\,\, \hat{v}\leq \hat{v}_{t}\,\,\,\text{in} \,\,\, \Sigma_t\setminus R_t(\Gamma^* \cup
	\{0\})\,\,\,\text{for all $t\in(-\infty,\lambda]$}\}
	\end{equation}
	and
	\begin{equation}\nonumber
	\lambda_0=\sup\,\Lambda_0.
	\end{equation}
	
	\noindent \emph{Step 2: we have that $\lambda_0=0$.} We argue by
	contradiction and suppose that $\lambda_0<0$. By continuity we know
	that $\hat{u}\leq \hat{u}_{\lambda_0}$ and $\hat{v}\leq \hat{v}_{\lambda_0}$ in $\Sigma_{\lambda_0}\setminus R_{\lambda_0}(\Gamma^* \cup \{0\})$. By the strong maximum principle we deduce that $\hat{u} < \hat{u}_{\lambda_0}$ and $\hat{v} < \hat{v}_{\lambda_0}$ in $\Sigma_{\lambda_0}\setminus R_{\lambda_0}(\Gamma^* \cup \{0\})$. Indeed, $\hat{u} = \hat{u}_{\lambda_0}$ and $\hat{v} = \hat{v}_{\lambda_0}$ in	$\Sigma_{\lambda_0}\setminus R_{\lambda_0}(\Gamma^* \cup \{0\})$) is not possible if $\lambda_0<0$, since in this case $\hat{u}$ and $\hat{v}$ would be singular somewhere on $R_{\lambda_0}(\Gamma^* \cup \{0\})$. Now, for	some $\bar\tau>0$, that will be fixed later on, and for any
	$0<\tau<\bar\tau$ we show that $\hat{u}\leq \hat{u}_{\lambda_0+\tau}$ and $\hat{v}\leq \hat{v}_{\lambda_0+\tau}$ in $\Sigma_{\lambda_0+\tau}\setminus R_{\lambda_0+\tau}(\Gamma^* \cup \{0\})$ obtaining a contradiction with the definition of $\lambda_0$ and proving thus the claim. To this end we recall that, repeating verbatim the argument used in the roof of Theorem \ref{E:main2}, it is possible to prove that for every $ \delta>0$ there are $ \bar{\tau}(\delta, \lambda_0)>0 $ and a compact set $K$ (depending on $\delta$ and $\lambda_0$) such that 
	\begin{equation} \label{E:compact}
	K \subset \Sigma_{\lambda}\setminus R_{\lambda}(\Gamma^* \cup \{0\}), \qquad
	\int_{\Sigma_{\lambda}\setminus K}\,\hat{u}^{2^*} < \delta \text{\; and \;} \int_{\Sigma_{\lambda}\setminus K}\,\hat{v}^{2^*} < \delta \qquad \forall \,  \lambda \in [\lambda_0, \lambda_0 + \bar{\tau}].
	\end{equation}
	
	Now we repeat verbatim the arguments used in the proof of Lemma \ref{E:stimgrad} but using the test function
	$$\Phi \,:= \begin{cases}
	\, \xi_{\lambda_0+\tau}^+\psi_\varepsilon^2\eta_R^2 \, & \text{in}\quad\Sigma_{\lambda_0+\tau}  \\
	0 &  \text{in}\quad \R^n \setminus \Sigma_{\lambda_0+\tau}.
	\end{cases} \quad \text{and} \quad \Psi\,:= \begin{cases}
	\, \zeta_{\lambda_0+\tau}^+\psi_\varepsilon^2\eta_R^2 \, & \text{in}\quad\Sigma_{\lambda_0+\tau}  \\
	0 &  \text{in}\quad \R^n \setminus \Sigma_{\lambda_0+\tau}.
	\end{cases}$$
	Thus we recover the first inequality in \eqref{E:kushfkusfksf}, and repeating verbatim the arguments used in \eqref{E:A_1}, \eqref{E:A_2} and \eqref{E:A_3} which
	immediately gives, for any $0 \le \tau<\bar\tau$
	\begin{equation}\label{E:finaldoublicritmov}
	\begin{split}
	\int_{\Sigma_{\lambda_0+\tau} \setminus K}|\nabla \xi^+_{\lambda_0+\tau}|^2 \,dx + \int_{\Sigma_{\lambda_0+\tau} \setminus K}|\nabla \zeta^+_{\lambda_0+\tau}|^2 \,dx &\leq C_1  C^2_{u,S} \| \hat{u}\|^{\alpha-2}_{L^{2^*}(\Sigma_{\lambda_0+\tau} \setminus K)} \int_{\Sigma_{\lambda_0+\tau} \setminus K}|\nabla \xi^+_{\lambda_0+\tau}|^2 \,dx \\
	&+ C_2 C^2_{v,S} \| \hat{v}\|^{\beta-2}_{L^{2^*}(\Sigma_{\lambda_0+\tau} \setminus K)} \int_{\Sigma_{\lambda_0+\tau} \setminus K}|\nabla \zeta^+_{\lambda_0 + \tau}|^2 \,dx,
	\end{split}
	\end{equation}
	
	where $C_1:=2 \frac{n+2}{n-2} \| \hat{u}\|^{\beta}_{L^{2^*}(\Sigma_{\lambda_0+\tau} \setminus K)} + \frac{\alpha (2^*+\beta-1)}{2^*} \| \hat{v}\|^{\beta}_{L^{2^*}(\Sigma_{\lambda_0+\tau} \setminus K)}$, $C_2:= 2 \frac{n+2}{n-2} \| \hat{v}\|^{\alpha}_{L^{2^*}(\Sigma_{\lambda_0+\tau} \setminus K)} +  \frac{\beta (2^*+\beta-1)}{2^*} \|\hat{u}\|^{\alpha}_{L^{2^*}(\Sigma_{\lambda_0+\tau} \setminus K)}$, $C_{u,S}$ and $C_{v,S}$ are the Sobolev constants. Now taking the compact set $K$ sufficiently large and thanks to \eqref{E:compact}, we can fix $\delta>0$ such that
	$$\delta < \min\{C_1  C^2_{u,S} \| \hat{u}\|^{\alpha-2}_{L^{2^*}(\Sigma_{\lambda_0+\tau} \setminus K)} ,C_2 C^2_{v,S} \| \hat{v}\|^{\beta-2}_{L^{2^*}(\Sigma_{\lambda_0+\tau} \setminus K)} \}$$ 
	and we observe that, thanks to \eqref{E:compact}, with this choice we have
	\[
	C_1  C^2_{u,S} \| \hat{u}\|^{\alpha-2}_{L^{2^*}(\Sigma_{\lambda_0+\tau} \setminus K)}< 1 \qquad \text{and} \qquad  C_2 C^2_{v,S} \| \hat{v}\|^{\beta-2}_{L^{2^*}(\Sigma_{\lambda_0+\tau} \setminus K)} <1, \qquad \forall \,\, 0 \le \tau<\bar\tau
	\]
	which plugged into \eqref{E:finaldoublicritmov} implies that
	$\displaystyle \int_{\Sigma_{\lambda_0+\tau} \setminus K}|\nabla \xi^+_{\lambda_0+\tau}|^2 \,dx = \int_{\Sigma_{\lambda_0+\tau} \setminus K}|\nabla \zeta^+_{\lambda_0+\tau}|^2 \,dx = 0$ for every $0 \le \tau<\bar\tau$.
	Hence $\displaystyle \int_{\Sigma_{\lambda_0+\tau}} |\nabla \xi^+_{\lambda_0+\tau}|^2 \,dx = \int_{\Sigma_{\lambda_0+\tau}}|\nabla \zeta^+_{\lambda_0+\tau}|^2 \,dx = 0$ for every $0 \le \tau<\bar\tau$,
	since $\nabla \xi_{\lambda_0+\tau}^+$ and $\nabla \zeta^+_{\lambda_0+\tau}$ are  zero in a neighbourhood of $K$. The latter and Lemma \ref{E:stimgrad} imply that $
	\xi_{\lambda_0+\tau}^+ =0 $ and  $
	\zeta_{\lambda_0+\tau}^+ =0 $ on $ \Sigma_{\lambda_0 +\tau}$ for every
	$0 \le \tau<\bar\tau$ and thus $\hat{u}\leq \hat{u}_{\lambda_0+\tau}$ and $\hat{v}\leq \hat{v}_{\lambda_0+\tau}$ in
	$\Sigma_{\lambda_0+\tau}\setminus R_{\lambda_0+\tau}(\Gamma^* \cup
	\{0\})$ for every $0 \le \tau<\bar\tau$ . Which proves the claim of
	Step 2.
	
	\noindent \emph{Step 3: conclusion.} The symmetry of the Kelvin
	transform $v$ follows now performing the moving plane method in the
	opposite direction. The fact that $\hat{u}$ and $\hat{v}$ are symmetric w.r.t. the
	hyperplane $\{ x_1=0 \}$ implies the symmetry of the solution $(u,v)$
	w.r.t. the hyperplane $\{ x_1=0 \}$. The last claim then follows by
	the invariance of the considered problem with respect to isometries
	(translations and rotations).
	
\end{proof}

\bigskip

\end{document}